\setlist[itemize]{label=---}
\newcommand{\ts}{\textstyle}
\newcommand{\op}{{\mathord{\mathrm{op}}}}
\newcommand{\id}{\mathord{\mathrm{id}}}
\newcommand{\Tr}{\operatorname{Tr}}
\newcommand{\qTr}{\operatorname{qTr}}
\newcommand{\qtr}{\operatorname{qtr}}
\renewcommand{\dim}{\operatorname{dim}}
\newcommand{\qdim}{\operatorname{dim_{\text{$q$}}}}
\newcommand{\Cred}[1]{C^*_r(#1)}
\newcommand{\PAP}{$\mathrm{PAP}$}
\newcommand{\PAPh}{$\mathrm{PAP}_h$}
\newcommand{\conv}{\operatorname{conv}}
\def\ii{{\bf i}}
\def\Sp{\operatorname{Sp}}
\def\GL{\operatorname{GL}}
\def\Prob{\operatorname{Prob}}
\def\Probh{\mathop{\Prob_h}}
\def\Probc{\mathop{\Prob^c}}
\def\Probhc{\mathop{\Prob_h^c}}
\def\Irr{\operatorname{Irr}}
\def\Fix{\operatorname{Fix}}
\def\Corep{\operatorname{Corep}}
\newcommand{\Hom}{\operatorname{Hom}}
\def\AD{\operatorname{AD}} 
\def\ad{\mathop{\mathrm{ad}}\nolimits}
\def\acts{\curvearrowright}
\newcommand\astop{\mathop{\mathord{\ast}^\op}}
\def\fU{\mathop{\mathcal{U}}}
\def\fO{\mathop{\mathcal{O}}}
\def\fP{\mathcal{P}}
\def\N{\mathbb{N}}
\def\Z{\mathbb{Z}}
\def\C{\mathbb{C}}
\def\R{\mathbb{R}}
\def\F{\mathbb{F}}
\def\G{\mathbb{G}}
\def\H{\mathbb{H}}
\theoremstyle{plain}
\newtheorem{thm}{Theorem}[section]
\newtheorem{lem}[thm]{Lemma}
\newtheorem{prop}[thm]{Proposition}
\newtheorem{cor}[thm]{Corollary}
\theoremstyle{definition}
\newtheorem{defn}[thm]{Definition}
\newtheorem{rem}[thm]{Remark}
\newtheorem{eg}[thm]{Example}
\title[$C^*$-simplicity and boundary actions of discrete quantum groups]{$C^*$-simplicity and boundary actions of \\ discrete quantum groups}
\author{Benjamin Anderson-Sackaney}
\address{Benjamin Anderson-Sackaney, Department of Mathematics and Statistics, Faculty of Science, University of Victoria, V8P 5C2 Victoria, BC, Canada}
\email{bandersonsackaney@uvic.ca}
\author{Roland Vergnioux}
\address{Roland Vergnioux, Normandie Univ, UNICAEN, CNRS, LMNO, 14000 Caen, France} 
\email{roland.vergnioux@unicaen.fr}
\keywords{Discrete quantum groups, quantum group actions, $C^*$-simplicity, Powers' averaging property, noncommutative boundaries}
\subjclass[2020]{46L67, 
46L05, 
20G42, 
37B05} 
\begin{document}

\begin{abstract}
    We introduce and investigate several quantum group dynamical notions for the purpose of studying $C^*$-simplicity of discrete quantum groups via the theory of boundary actions. In particular we define a quantum analogue of Powers' Averaging Property (PAP) and a quantum analogue of strongly faithful actions. We show that our quantum PAP implies $C^*$-simplicity and the uniqueness of $\sigma$-KMS states, and that the existence of a strongly $C^*$-faithful quantum boundary action also implies $C^*$-simplicity and, in the unimodular case, the quantum PAP. We illustrate these results in the case of the unitary free quantum groups $\F U_F$ by showing that they satisfy the quantum PAP and that they act strongly $C^*$-faithfully on their quantum Gromov boundary. Moreover we prove that this particular action of $\F U_F$ is a quantum boundary action.
\end{abstract}

\maketitle


\section{Introduction}

A discrete group $G$ is said to be $C^*$-simple if its reduced $C^*$-algebra $C^*_r(G)$ is simple, meaning that it has no non-trivial proper closed two-sided ideals. The theory of $C^*$-simplicity began with the work of Powers in \cite{P75} in which he established that the free group on two generators satisfies a certain strong group-theoretic averaging condition now known as Powers' averaging property (\PAP), and showed that this condition implies $C^*$-simplicity. This strong averaging property has been a prominent part of the theory since. Remarkably, it was later shown independently by Haagerup and Kennedy respectively in \cite{H16, K20} that Powers' averaging property is equivalent to $C^*$-simplicity. 

On the other hand, Kalantar and Kennedy discovered in \cite{KK14} a surprising connection between $C^*$-simplicity and the theory of boundary actions in topological group dynamics, which had been initiated by Furstenberg in the 1950s. Recall that an action of a group $G$ on a compact space is called a boundary action if it is minimal and strongly proximal. Kalantar and Kennedy established more precisely that a group $G$ is $C^*$-simple if and only if it acts freely on its Furstenberg boundary $\partial_F G$, if and only if it admits some topologically free boundary action. A key observation for these results is the fact, \cite[Remark 4]{H78}, that the algebra of continuous functions on the Furstenberg boundary is equivariantly isomorphic to Hamana's injective envelope of $\C$ in the category of $G$-equivariant operator systems. 

A bit later it was proved in \cite{BKKO17} that a group acts faithfully on its Furstenberg boundary if and only if it has the unique trace property, i.e., its reduced $C^*$-algebra has a unique trace. These results gave a solution to one direction of a long-standing open conjecture, namely that $C^*$-simplicity implies the unique trace property --- a counterexample to the converse was found in \cite{B17}. 

\bigskip

$C^*$-simplicity of a discrete {\em quantum} group $\G$ is defined exactly as in the classical case, using the reduced $C^*$-algebra $C^*_r(\G)$. The study of $C^*$-simplicity in this framework begins with Banica's work in \cite{B97} where he proved that Wang's free unitary discrete quantum groups $\F U_F$ \cite{Wang3,WangVanDaele} are $C^*$-simple by adapting Powers' methods to the quantum setting. The free orthogonal quantum groups $\F O_F$ and the quantum groups of quantum automorphisms of finite-dimensional $C^*$-algebras (equipped with their canonical trace) were later proved to be $C^*$-simple as well under some restrictions on the parameter matrix $F$, resp.\ the dimension of the considered $C^*$-algebra, see \cite{VV07, B13}.

The theory of boundary actions has also been extended to the setting of discrete quantum groups in \cite{KKSV22}, using the connection with Hamana's work mentioned above: the quantum Furstenberg boundary $C(\partial_F\G)$ is e.g.\ defined to be Hamana's $\G$-injective envelope of $\C$. Then, in \cite{KKSV22, ASK23} it was shown that a unimodular discrete quantum group has the unique trace property if and only if it acts faithfully on its Furstenberg boundary. Furthermore, in \cite{ASK23, deRo} it was shown that $C^*$-simplicity implies faithfullness of the action on the Furstenberg boundary. In particular, in the unimodular case, $C^*$-simplicity implies the unique trace property.

Despite these results, it seems that the theory of $C^*$-simplicity for quantum groups remains underdeveloped. For instance, an appropriate analogue of the notion of (topological) freeness for actions of discrete quantum groups on noncommutative $C^*$-algebras for the purposes of studying $C^*$-simplicity has yet to appear in the literature. A version of Powers' averaging property has not been developed for quantum groups either. In this paper we aim to address this gap in the theory as follows:
\begin{itemize}[nosep]
\item we propose quantum analogues of the {\PAP} and of topologically free boundary actions, 
\item we prove that these properties imply $C^*$-simplicity,
\item we prove that they are satisfied in our test case, namely the free unitary quantum groups $\F U_F$.
\end{itemize}

\bigskip

It is an important feature of our approach that we study these properties specially in relation to the ``rigidity'' of $\G$-equivariant ucp maps $C^*_r(\G)\to C(\partial_F\G)$. A deep result of Kennedy in \cite{K20}, used in its proof of the equivalence between $C^*$-simplicty and the \PAP, is that $C^*$-simplicity is characterized by uniqueness of a $G$-equivariant ucp map $C^*_r(G)\to C(\partial_F G)$. This result has had various generalizations, including a noncommutative analogue in \cite{Z19} where it was shown that a certain freeness property for an action of a group $G$ on a unital $C^*$-algebra $A$ is equivalent to the uniqueness of conditional expectations from the reduced crossed product $A\rtimes_rG$ onto $A$. Concerning a more general context for such problems, there has been interest in the ``rigidity'' (e.g.\ uniqueness) of (pseudo)-conditional expectations $A\to B$ of a $C^*$-inclusion $A\subseteq B$, cf.\ \cite{P12, PSZ23, PZ15, Z19}.

Our first observation is in Proposition~\ref{SimplicityCondition}, which states that $\G$ is $C^*$-simple if and only if every $\G$-equivariant ucp map $C^*_r(\G)\to C(\partial_F\G)$ is faithful. We consider also two additional properties: the uniqueness of $\G$-equivariant ucp maps $C^*_r(\G)\to C(\partial_F\G)$, and when all $\G$-equivariant ucp maps $C^*_r(\G)\to C(\partial_F\G)$ factor the canonical Haar state $h : C^*_r(\G)\to\C$ in the sense of Definition~\ref{Factor Haar State Def}. Note that we always have the canonical ucp map $a\mapsto h(a)1$ from $C^*_r(\G)$ to $C(\partial_F\G)$, which is faithful and factors $h$, but it is $\G$-equivariant if and only if $\G$ is unimodular --- this is one of the difficulties that are specific to the quantum setting.

\bigskip

We consider two versions of {\em Powers' averaging property} that generalize the {\PAP} for groups, namely the {\PAP} and {\PAPh} (see Definition~\ref{PAP Def}), where we note that {\PAPh} $\implies$ {\PAP}. Again in the unimodular case we have the simplification {\PAP} = {\PAPh}. We observe that the {\PAP} implies that $\G$ is $C^*$-simple and the {\PAPh} implies additionally that $C^*_r(\G)$ has a unique $\sigma$-KMS state (Corollary~\ref{PowersAveSimplicity}). Also, we observe that what Banica really showed in \cite{B97} is the {\PAPh} for the free unitary quantum groups $\F U_F$ (Propostion~\ref{Banicas Result}).

One of Kennedy's main results of \cite{K20} states that a group has Powers' averaging property if and only if the only $G$-boundary contained in $S(C^*_r(G))$ is trivial. In the quantum setting, $\G$-boundaries inside $S(C^*_r(\G))$ will be too small in general since they are classical. We, however, are able to obtain the following quantum analogue of Kennedy's result by replacing $\G$-boundaries with $\G_h$-boundary envelopes (see Definitions \ref{Envelopes Notation} and \ref{Boundary Envelopes Def}). \\[\medskipamount]
{\bf\noindent Theorem~\ref{GBoundariesPAP}.} {\em\nopagebreak A discrete quantum group $\G$ has the 
{\PAPh} if and only if the only 
$\G_h$-boundary envelope in $S(\Cred\G)$ is trivial.} \medskip

In Theorem~\ref{GBoundariesPAP} we also observe a stationary dynamical characterization of the \PAPh{} in the case where $C^*_r(\G)$ is separable.

Kennedy observed in \cite{K20} that the $G$-boundaries inside $S(C^*_r(G))$ are in bijection with $G$-equivariant ucp maps $C^*_r(G)\to C(\partial_F G)$. It is with this observation that he was able to characterize the Powers' averaging property via the uniqueness of $\G$-equivariant ucp maps $C^*_r(G)\to C(\partial_FG)$. We make similar observations in our work and, as an application of Theorem~\ref{GBoundariesPAP}, we obtain the following. \pagebreak[3] \\[\medskipamount]
{\bf\noindent Corollary~\ref{PAPUniqueGMap}.} \emph{\begin{enumerate}[nosep]
    \item A discrete quantum group $\G$ has the {\PAP} {\bf iff} every $\G$-equivariant ucp map $C^*_r(\G)\to C(\partial_F\G)$ factors the Haar state.
    \item If $\G$ is unimodular, then it has the {\PAP} {\bf iff} there is a unique $\G$-equivariant ucp map $C^*_r(\G)\to C(\partial_F\G)$.
\end{enumerate}}

\bigskip

Afterwards, we investigate a notion of {\em freeness} for quantum group actions which generalizes the notion of a group acting freely on a $C^*$-algebra in the sense of \cite{Z19} - note that, despite the nomenclature, a group acting freely on a commutative $C^*$-algebra is equivalent to the group acting topologically freely on the underlying topological space. In fact, we prove that an action $\G\acts A$ is free if and only if there exists a unique conditional expectation $A\rtimes_r\G\to A$ (Theorem~\ref{UniqueConditionalExpectations}), generalizing one of the main results of \cite{Z19}. We note, however, that this result implies that $\G\acts C(\partial_F\G)$ is never free when $\G$ is non-unimodular (see Corollary~\ref{TopFreenessImpliesPAPKac}). This observation suggests that this might not be the right notion to consider in the theory of $C^*$-simplicity. On the other hand, we apply the techniques used in the proof of Theorem~\ref{UniqueConditionalExpectations} to prove that faithfulness of an action $\G\acts A$ is equivalent to uniqueness of conditional expectations $A\rtimes_r\G\to A$ that restrict to states on $C^*_r(\G)$ (Theorem~\ref{Faithful Coaction Theorem}). 

In the search of a notion in quantum topological dynamics that would characterize, or at least imply, $C^*$-simplicity, while allowing for boundary actions of non-unimodular quantum groups, we introduce {\em strong $C^*$-faithfulness} for actions of discrete quantum groups on $C^*$-algebras (Definition~\ref{Def C*faithful}). In the classical case of discrete groups acting on locally compact spaces, it recovers strong faithfulness as considered e.g.\ in \cite[Lemma~4]{delaHarpe} and \cite[Section~2.1]{FLMMS}. Note that in this classical setting, a minimal group action on a compact space is strongly faithful if and only if it is topologically free, and recall that boundary actions are in particular compact and minimal, so that the main result of \cite{KK14} can be rephrased by saying that a discrete group $G$ is $C^*$-simple if and only if it admits a strongly faithful boundary action. In the quantum setting we are able to prove one direction of that result: \\[\medskipamount]
{\bf\noindent Theorem~\ref{Strong Faithfulness Implies Faithful Expectations} and Corollary~\ref{Strongly C* faithful Implies C*simple and Faithful}.} \emph{If $\G$ admits a $\G$-boundary with strongly $C^*$-faithful action, then $\G$ has the \PAP{}. In particular, $\G$ is $C^*$-simple.}

\bigskip

We finally come back to our test example $\F U_F$ with the aim of showing that it satisfies the hypothesis of the above theorem, with respect to the quantum Gromov boundary $A = C(\partial_G\F U_F)$ introduced in \cite{VVV10}. We first show that this action is strongly $C^*$-faithful by using a combinatorial trick uncovered by Banica in \cite{B97}. On the other hand, the fact that $C(\partial_G\F U_F)$ is a $\G$-boundary has been established in \cite{HHN22} but only with respect to the natural action of the Drinfeld double $\G = D(\F U_F)$. We upgrade this result by showing that $C(\partial_G\F U_F)$ is in fact already an $\F U_F$-boundary --- this is much more intricate and relies on the unique stationarity method used in \cite{KKSV22} in the case of $\F O_F$. We prove more precisely the following.\\[\medskipamount]
{\bf\noindent Theorem~\ref{Unique stationary state}.} {\em Let $F\in GL_N(\C)$ with $N\geq 3$.
Then $C(\partial_G\F U_F)$ has a unique stationary state with respect to the ``nearest neighboor'' quantum random walk on $\F U_F$.
}\medskip

As an intermediate step for the non-unimodular case we establish a non-atomicity result for the restriction of stationary states to the {\em classical} Gromov boundary of $\F U_F$. Taking into account the results of \cite{KKSV22} and \cite{VVV10}, the above theorem implies that $C(\partial_G \F U_F)$ is indeed an $\F U_F$-boundary, and the strong $C^*$-faithfulness property together with our general results yield a new ``dynamical proof'' of $C^*$-simplicity of $\F U_F$ when $F \in GL_N(\C)$, $N\geq 3$.

\bigskip

We complete the introduction by presenting the layout of this paper. In Section $2$ we discuss the preliminaries on discrete quantum groups, boundary actions, free unitary discrete quantum groups, and $C^*$-simplicity. In Section $3$ we develop notions of Powers' averaging property for discrete quantum groups. Here, we prove a characterization in terms of ``boundary envelopes'' in the state space of $C^*_r(\G)$ and, in the unimodular case, a characterization in terms of $\G$-equivariant ucp maps $C^*_r(\G)\to C(\partial_F\G)$ as discussed above. In Section $4$ we develop a notion of a free action and a notion of a strong $C^*$-faithful action, and investigate the connections with $C^*$-simplicity and the \PAP. Finally, in Section $5$ we prove that the Gromov boundary of $\F U_F$ is a $\F U_F$-boundary when $N\geq 3$.

\section{Preliminaries}

\subsection{Discrete quantum groups and their boundaries}

Discrete quantum groups were introduced as duals of compact quantum groups in \cite{PodlesWoronowicz_Lorentz} ; an operator-theoretic characterization is given in \cite{BaajSkandalis_MultUnitaries} at the level of the associated multiplicative unitary, and algebraic characterizations are given in \cite{EffrosRuan_DQG} and \cite{VanDaele_DQG} at the level of the associated algebras. We shall mostly follow the notation and conventions established in \cite{KV00} for general locally compact quantum groups. 

So let $\G$ be a locally compact quantum group, given by the reduced Hopf-$C^*$-algebra $c_0(\G)$. We choose a GNS space $(H,\Lambda)$ for the left Haar weight $h_L$ of $\G$ and construct the left multiplicative unitary $W\in B(H\otimes H)$ of $\G$, given by the formula $W^*(\Lambda(f)\otimes\Lambda(g)) = (\Lambda\otimes\Lambda)(\Delta(g)(f\otimes 1))$ for $f$, $g\in c_c(\G)$. One can then recover $c_0(\G)$ as the norm closure of $\{(\id\otimes\omega)(W) \mid \omega\in B(H)_*\}$, equipped with the coproduct $\Delta(f) = W^*(1\otimes f)W$. Throughout the article we assume $\G$ and $W$ to be of discrete type, i.e.\ there exists a unit vector $\eta\in H$ such that $W(\eta\otimes\zeta)=\eta\otimes\zeta$ for all $\zeta\in H$. The $C^*$-algebra $c_0(\G)$ then contains a distinguished dense multiplier Hopf $*$-algebra \cite{VanDaele_Multiplier} that we denote $c_c(\G)$.

In this article, we are interested in the structure of the reduced group
$C^*$-algebra $\Cred\G$ of $\G$, defined as the norm closure of
$\{(\omega\otimes\id)(W) \mid \omega\in B(H)_*\}$ in $B(H)$. We have then
$W \in M(c_0(\G)\otimes\Cred\G)$ and we endow $\Cred\G$ with the adjoint action
of $\G$ given by the coaction $\ad : \Cred\G\to M(c_0(\G)\otimes\Cred\G), x\mapsto W^*(1\otimes x)W$. 
Sometimes it will be more convenient to use the adjoint action of the
opposite discrete quantum group $\G^\op$ given by the $*$-homomorphism
$\ad^\op : \Cred\G\to M(c_0(\G)\otimes\Cred\G), x\mapsto W(1\otimes x)W^*$,
which is indeed a coaction for the opposite coproduct
$\Delta^\op = \sigma\circ\Delta$ on $c_0(\G^\op) = c_0(\G)$.
Furthermore, the adjoint actions of $\G$ and $\G^\op$ are related by the equation $\ad^\op(x) = (R\otimes \hat{R})\ad(\hat{R}(x))$, where $R$ and $\hat R$ are the unitary antipodes of $c_0(\G)$ and $\Cred\G$ respectively, satisfying the identity $(R\otimes\hat R)(W) = W$.

\bigskip

We denote $\Corep(\G)$ the category of non-degenerate finite-dimensional
$*$-representations of $c_0(\G)$, and $\Irr(\G) = I$ the set of irreducible objects up to
equivalence. Note that such representations
$\pi : c_0(\G) \to B(H_\pi)$ correspond to unitary elements
$w = (\pi\otimes\id)(W) \in B(H_w)\otimes\Cred\G$, where $H_w=H_\pi$, and we
will in fact rather use this second picture.  The category $\Corep(\G)$ has a natural monoidal structure given by
$\pi\otimes\rho := (\pi\otimes\rho)\circ\Delta$ and
$v\otimes w := v_{13}w_{23}$. Since $\G$ is discrete, we have an
isomorphism $c_0(\G) \simeq \bigoplus^{c_0}_{w\in I} B(H_w)$ such that
$W = \bigoplus_{w\in I}w$, and we denote $\ell^\infty(\G) = M(c_0(\G))$ the corresponding $\ell^\infty$-direct sum.
We also denote $p_w\in c_0(\G)$ the minimal central projection corresponding to $\id\in B(H_w)$ in this isomorphism.

The monoidal category $\Corep(\G)$ is rigid, in particular we can find for each
$v\in\Corep(\G)$ another corepresentation $\bar v$, unique up to isomorphism,
and morphisms $t_v : \C\to H_v\otimes H_{\bar v}$,
$s_v : \C\to H_{\bar v}\otimes H_v$ satisfying the conjugate equations and
normalized so that $t_v^*t_v = s_v^* s_v =: \qdim(v)$. If $v=w$ is irreducible,
these morphisms are unique up to a phase and we consider the associated left and
right quantum traces, $\qTr_w(a) = t_w^*(a\otimes 1)t_w$ and
$\qTr'_w(a) = s_w^*(1\otimes a)s_w$, for $a\in B(H_w)$, as well as the
corresponding states $\qtr_w = (\qdim w)^{-1}\qTr_w$,
$\qtr'_w = (\qdim w)^{-1}\qTr'_w$. We can also write $\qTr_w(a) = \Tr(Q_wa)$,
$\qTr'_w(a) = \Tr(Q_w^{-1}a)$ for a unique positive matrix $Q_w \in
B(H_w)$. Compare \cite[Notation~1.11]{VV07}, where the vectors $t_w$, $s_w$ are
however normalized differently. Note that the conjugate equations yield $(\id\otimes\qTr_w)(t_wt_w^*) = \id_w$, whereas we have $(\qTr_w\otimes\id)(t_wt_w^*) = Q_w^{-2}$.

Using this data we can write down explicitly the left- and right-invariant Haar
weights on $c_0(\G)$:
\begin{displaymath}
  h_L(a) = \sum \qdim(w)^2 \qtr_w(a_w) \quad\text{and}\quad
  h_R(a) = \sum \qdim(w)^2 \qtr'_w(a_w),
\end{displaymath}
for $a = (a_w)_w \in c_c(\G)$. Compare \cite[Proposition~1.12]{VV07}.  Note that
since we have $h_L(ab) = h_L(bS^2(a))$ in any discrete quantum group algebra,
the above formula implies $S^2(a) = Q_waQ_w^{-1}$ for
$a\in B(H_w) \subset c_c(\G)$, and we have $S(Q_w)=Q_w^{-1}=S^{-1}(Q_w)$. Recall also that if we switch from $\G$ to
$\G^\op$ we have to exchange $h_L$ with $h_R$, and to replace $S$ with $S^{-1}$ (but the unitary antipode $R = R^{-1}$ is unchanged).

\bigskip

There are two ways to endow $\Cred\G$ with the structure of a Woronowicz
$C^*$-algebra, we choose the coproduct
$\Delta : \Cred\G\to \Cred\G\otimes \Cred\G$ such that
$(\id\otimes\Delta)(W) = W_{12}W_{13}$, as in \cite{VV07}. We denote $\hat\G$
the compact quantum group given by $C^r(\hat\G) = \Cred\G$ and $\Delta$. Then
the unitaries $W$ and $w\in I$ above are corepresentations of $(\Cred\G,\Delta)$
in the sense of \cite{W87} and $\Corep(\G)$ identifies with the category of
finite-dimensional unitary representations of $\hat\G$. We denote
$\C[\G] = \fO(\hat\G)$ the canonical dense Hopf-$*$-subalgebra spanned by
coefficients of finite-dimensional corepresentations.

We denote $h \in\Cred\G^*$ the Haar state, and we can write down the
Woronowicz-Peter-Weyl orthogonality relations using the matricial states $\qtr$,
$\qtr'$ introduced above:
\begin{displaymath}
  (\id\otimes h)(w(a\otimes 1)w^*) = \qtr_w(a), \quad
  (\id\otimes h)(w^*(a\otimes 1)w) = \qtr'_w(a),
\end{displaymath}
for $a\in B(H_w)$, $w\in I$. See \cite{VV07}, Notation~1.11. We recognize the
modular matrices $F_w = Q_w$ from \cite{W87} for $(\Cred\G,\Delta)$, which are connected to
Woronowicz' characters $f_z\in\C[\G]^*$ by the formula
$Q_w^z = (\id\otimes f_z)(w)$.

Recall that the modular group of $h$ is implemented by these characters, more
precisely we have $h(xy) = h(y\sigma_{-i}(x))$ if we put
$\sigma_{z}(x) = f_{iz}*x*f_{iz}$ for $x$, $y\in\C[\G]$. Here we are using the
convolution products $\varphi*x = (\id\otimes\varphi)\Delta(x)$,
$x*\varphi = (\varphi\otimes\id)\Delta(x)$ for $x\in\C[\G]$,
$\varphi \in \C[\G]^*$. This yields
$(\id\otimes h)((1\otimes y)w) = (\id\otimes h)((Q_w^{-1}\otimes
1)w(Q_w^{-1}\otimes 1)(1\otimes y))$ for $w\in I$. Since on the other hand
$\qtr_w(ba) = \qtr_w(Q_w^{-1}aQ_wb)$, we obtain
\begin{displaymath}
  (\qtr_w\otimes h)(w^*(1\otimes y)w) = (\qtr_w\otimes h)((Q_w^{-2}\otimes 1)ww^*(1\otimes y))
  = \qtr'_w(1)h(y) = h(y).
\end{displaymath}
In terms of the adjoint action $\ad$ this can be written $(\qtr_w\otimes h) \ad = h$. Similarly one can check that $(\qtr'_w\otimes h)\ad^\op = h$.

\bigskip

An action of a discrete quantum group $\G$ on a $C^*$-algebra $A$ is given by a non-degenerate $*$-homo\-morphism $\alpha : A\to M(c_0(\G)\otimes A)$ such that
$(\id\otimes\alpha)\alpha = (\Delta\otimes\id)\alpha$ and $\alpha(A)(c_0(\G)\otimes 1)$ is a dense subspace of $c_0(\G)\otimes A$. We will also say that $A$ is
a $\G$-$C^*$-algebra. For $w\in I$ we denote $\alpha_w = (p_w\otimes 1)\alpha : A \to M(B(H_w)\otimes A)$.
A completely positive (cp) map $\Phi : A \to B$ between $\G$-$C^*$-algebras $(A,\alpha)$, $(B,\beta)$ is called $\G$-equivariant if
$(\id\otimes\Phi)\alpha = \beta\Phi$. For $f\in \ell^1(\G) = c_0(\G)^*$ and $a\in A$ we denote $f*a = (f\otimes\id)\alpha(a) \in M(A)$. For $\nu\in A^*$ we
consider the Poisson transform $\fP_\nu = (\id\otimes\nu)\alpha: A\to M(c_0(\G))$. Throughout we will assume that $A$ is unital unless otherwise specified.

The reduced crossed product $A\rtimes_r\G$ is the closed subspace of $M(K(H)\otimes A)$ generated by the elements $\alpha(a)(x\otimes 1)$ with $a\in A$,
$x\in\Cred\G$. It is equipped with an action of the dual of $\G$ and, more importantly in this paper, by the action of $\G$ associated to the map
$\ad_A : A\rtimes_r\G\to M(c_0(\G)\otimes (A\rtimes_r\G))$, $X \mapsto (W^*\otimes 1)(1\otimes X)(W\otimes 1)$. The non-degenerate injective $*$-homomorphisms
$\alpha : A \to A\rtimes_r\G$ and $x\in\Cred\G\to x\otimes 1\in A\rtimes_r\G$ are then equivariant with respect to $\ad_A$, $\alpha$ and $\ad$. If there is no
risk of confusion we will denote $\ad_A = \ad$.

We say that a unital $\G$-$C^*$-algebra $A$ is a $\G$-boundary if for every state $\nu\in S(A)$ the Poisson map $\fP_\nu$ is completely isometric (ci). Equivalently,
every unital cp (ucp) equivariant map $\Phi : A\to B$ to any unital $\G$-$C^*$-algebra $B$ is completely isometric, cf \cite[Section~4]{KKSV22}. There exists a
$\G$-boundary in which every $\G$-boundary embeds equivariantly and completely isometrically, and it is unique up to (unique) equivariant $*$-isomorphism. It is
called the Furstenberg boundary of $\G$, denoted $C(\partial_F\G)$ \cite[Theorem~4.16]{KKSV22}. Moreover, this $\G$-boundary is $\G$-injective, meaning that for
any equivariant uci map $\Phi : A \to B$, every equivariant ucp map $\psi : A \to C(\partial_F\G)$ extends to $\Psi : B \to C(\partial_F\G)$.

\bigskip

As in the classical case, a discrete quantum group $\G$ is called {\bf $C^*$-simple} if $\Cred\G$ is a simple $C^*$-algebra. In the classical case it is known
that there are enough ucp maps $\Psi : \Cred\G \to B$ to detect $C^*$-simplicity --- see, for example, \cite[Proposition 3.1]{KS22} and for a generalization to
(noncommutative) crossed products (of groups) see \cite[Theorem 6.6]{KS19}. This fact holds in the setting of DQGs as well.

More precisely, for a $\G$-equivariant ucp map $\Psi : A\to B$ we denote $I_\Psi = \{a\in A : \Psi(a^*a) = 0\}$, which is automatically a closed left ideal by
the Schwarz inequality for ucp maps. Recall that $\Psi$ is called faithful if $I_\Psi = \{0\}$.
\begin{prop}\label{SimplicityCondition}
  Let $\G$ be a DQG and $(A,\alpha)$ a $\G$-$C^*$-algebra. For every
  $\G$-equivariant ucp map $\Psi : \Cred \G\to A$, $I_\Psi$ is a two-sided
  ideal. Moreover, for every closed two-sided ideal $I \subsetneq \Cred\G$ there
  exists a $\G$-equivariant ucp map $\Phi : \Cred \G\to C(\partial_F\G)$ such
  that $I\subseteq I_\Phi$. In particular, $\G$ is $C^*$-simple if and only if
  every $\G$-equivariant ucp map $\Cred\G \to C(\partial_F\G)$ is faithful.
\end{prop}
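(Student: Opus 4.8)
The plan is to establish the three assertions in turn, the first being the technical heart.

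\textbf{(1)} To show that $I_\Psi$ is two-sided, I would first prove it is invariant under the adjoint action and then upgrade the left-ideal property to a two-sided one using the unitarity of corepresentations. For invariance, take $a\in I_\Psi$ and a state $\omega\in B(H)_*$; since $\ad$ is a $*$-homomorphism, $\ad(a)^*\ad(a) = \ad(a^*a)$, so the Kadison--Schwarz inequality for the slice map $(\omega\otimes\id)$ gives $c_\omega^*c_\omega\le(\omega\otimes\id)\ad(a^*a)$, where $c_\omega = (\omega\otimes\id)\ad(a)$. Applying the positive map $\Psi$ and using equivariance, $\Psi(c_\omega^*c_\omega)\le(\omega\otimes\id)(\id\otimes\Psi)\ad(a^*a) = (\omega\otimes\id)\alpha(\Psi(a^*a)) = 0$, so $c_\omega\in I_\Psi$; thus $I_\Psi$ is $\ad$-invariant. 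Working componentwise with $\ad_w(a) = w^*(1\otimes a)w$ for $w\in\Irr(\G)$ and slicing by the matrix coefficients of $w$, one obtains that all ``entries'' $\sum_p w_{pq}^* a\, w_{pl}$ lie in $I_\Psi$; left-multiplying by $w_{mq}$, summing over $q$, and using the unitarity relation $\sum_q w_{mq}w_{pq}^* = \delta_{mp}1$ together with the left-ideal property then yields $a\,w_{ml}\in I_\Psi$. Since the $w_{ml}$ span the dense Hopf-$*$-algebra $\C[\G]$ and $I_\Psi$ is closed, $I_\Psi$ is also a right ideal, hence two-sided.

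\textbf{(2)} For a proper closed two-sided ideal $I\subsetneq\Cred\G$, I would first note that $I$ is automatically $\ad$-invariant: since $W\in M(c_0(\G)\otimes\Cred\G)$ is unitary and $c_0(\G)\otimes I$ is a closed two-sided ideal of $c_0(\G)\otimes\Cred\G$, conjugation by $W$ preserves it, so $\ad(I)\subseteq M(c_0(\G)\otimes I)$. Consequently $\ad$ descends to a coaction $\bar\ad$ on the nonzero quotient $B = \Cred\G/I$, for which the quotient map $q:\Cred\G\to B$ is $\G$-equivariant, ucp, and kills $I$. It then remains to produce an equivariant ucp map $B\to C(\partial_F\G)$ and compose with $q$. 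For this I would invoke $\G$-injectivity of $C(\partial_F\G)$: the unital inclusion $\C\to B$ is equivariant and completely isometric, so the canonical equivariant ucp map $\C\to C(\partial_F\G)$ extends along it to an equivariant ucp map $\Psi:B\to C(\partial_F\G)$. Then $\Phi=\Psi\circ q$ is equivariant, ucp, and satisfies $\Phi|_I = 0$, so $I\subseteq I_\Phi$.

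\textbf{(3)} The equivalence now follows formally. If every equivariant ucp map $\Cred\G\to C(\partial_F\G)$ is faithful and $I\subsetneq\Cred\G$ is a proper closed two-sided ideal, then the map $\Phi$ from (2) has $I\subseteq I_\Phi=\{0\}$, so $I=0$ and $\Cred\G$ is simple. Conversely, if $\G$ is $C^*$-simple and $\Phi:\Cred\G\to C(\partial_F\G)$ is equivariant ucp, then $I_\Phi$ is a closed two-sided ideal by (1); it is proper because $\Phi(1)=1$ forces $1\notin I_\Phi$, so simplicity gives $I_\Phi=0$ and $\Phi$ is faithful. I expect the main obstacle to be the bookkeeping in (1): carefully justifying the $\ad$-invariance and the matrix-coefficient manipulation at the multiplier level (slicing $\ad_w$, handling the non-unital $c_0(\G)$, and the closure argument identifying the right ideal), together with verifying in (2) that the quotient coaction $\bar\ad$ is genuinely non-degenerate. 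The analytic inputs --- Kadison--Schwarz, positivity of $\Psi$, and $\G$-injectivity --- are standard, so the difficulty is structural rather than estimation-based.
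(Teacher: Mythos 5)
Your proposal is correct and follows the same overall structure as the paper's proof: right-ideal property via coefficients of irreducible corepresentations, $\ad$-invariance of ideals from $W\in M(c_0(\G)\otimes\Cred\G)$, descent to the quotient, and $\G$-injectivity applied to the inclusion $\C\subset\Cred\G/I$. Parts (2) and (3) are essentially verbatim the paper's argument.

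The only genuine divergence is in part (1). The paper gets the right-ideal property in one stroke: from $\xi\xi^*\otimes x^*x\le 1\otimes x^*x$ it deduces $0=(\omega_\zeta\otimes\Psi)(u^*(1\otimes x^*x)u)\ge\Psi(u_{\zeta,\xi}^*x^*xu_{\zeta,\xi})$, hence $xu_{\zeta,\xi}\in I_\Psi$ directly, without invoking the left-ideal property at all. You instead first prove $\ad$-invariance of $I_\Psi$ via Kadison--Schwarz for the slice maps and then convert the entries $\sum_p w_{pq}^*aw_{pl}$ into $aw_{ml}$ using $ww^*=1$ and the left-ideal property. Your route is valid --- the only point you gloss over is that Kadison--Schwarz gives $c_\omega\in I_\Psi$ only for \emph{states} $\omega$, and you need polarization plus linearity of $I_\Psi$ to reach the off-diagonal matrix coefficients; this is harmless since $I_\Psi$ is a closed subspace. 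The paper's version is shorter and self-contained, while yours isolates the reusable fact that $I_\Psi$ is $\ad$-invariant; either is acceptable.
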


\begin{proof}
  Take $u\in\Irr(\G)$ and unit vectors $\zeta$, $\xi \in H_u$. For $x\in I_\Psi$ we can write
  \begin{align*}
    0 &= (\omega_\zeta\otimes\id)\alpha(\Psi(x^*x)) =
    (\omega_\zeta\otimes\Psi)(\ad(x^*x)) =
    (\omega_\zeta\otimes\Psi)(u^*(1\otimes x^*x)u)) \\ & \geq
    (\omega_\zeta\otimes\Psi)(u^*(\xi\xi^*\otimes x^*x)u)) =
    \Psi(u_{\zeta,\xi}^*x^*xu_{\zeta,\xi}).
  \end{align*}
  Hence $xu_{\zeta,\xi} \in I_\Psi$ and since the coefficients
  $u_{\zeta,\xi} = (\xi^*\otimes 1)u(\zeta\otimes 1)$ span a dense subspace of
  $\Cred\G$ we conclude that $I_\Psi$ is a right ideal.
    
  If $I\subset \Cred\G$ is a bilateral ideal, we have
  $\ad(I)(c_0(\G)\otimes\Cred\G) = W^*(1\otimes I)W(c_0(\G)\otimes\Cred\G)
  \subset c_0(G)\otimes I$, since $W\in M(c_0(\G)\otimes \Cred\G)$. Denoting
  $q : \Cred\G\to A = \Cred\G/I$ the quotient map, this shows that
  $(\id\otimes q)\ad$ factors to a coaction $\alpha$ on $A$ such that $q$
  is equivariant. Applying $\G$-injectivity of
  $C(\partial_F\G)$ to the unital inclusion $\C\subset A$ and the canonical unital map $\C\to C(\partial_F\G)$ we obtain a $\G$-equivariant ucp map  $\theta : A\to C(\partial_F\G)$. 
  Then $\Phi = \theta\circ q$ is a
  $\G$-equivariant ucp map such that $I\subseteq I_\Phi$.
\end{proof}

\subsection{The free unitary quantum groups and their boundary}
\label{Section Boundary FUF}

Most examples of discrete quantum groups are in fact defined as duals of compact matrix quantum groups. For instance, fix an integer $N \geq 2$ and
$F\in \GL_N(\C)$. The universal unitary compact quantum group $U^+_F$ is the compact quantum group given by the universal unital $C^*$-algebra $C^u(U_F^+)$
generated by the entries of unitary matrix $u\in M_N(C^u(U^+_F))$ and the relations making $F\bar u F^{-1}$ a unitary matrix as well, endowed with the coproduct such that
$\Delta(u_{ij}) = \sum u_{ik}\otimes u_{kj}$ \cite{Wang3,WangVanDaele}. Identifying $M_N(C^u(U^+_F))$ with $B(\C^N)\otimes C^u(U^+_F)$, the matrix $u$ becomes a
representation of $U_F^+$. 

The Woronowicz $C^*$-algebra $C^u(U^+_F)$ has a reduced version $C^r(U_F^+)$, and up to isomorphism there is a unique discrete quantum group $\F U_F$, given by a Hopf-$C^*$-algebra $(c_0(\F U_F),\Delta)$, such that following the constructions explained previously we have $(C^*_r(\F U_F), \Delta) \simeq (C^r(U_F^+),\Delta)$. We will accordingly denote $C^*(\F U_F) = C^u(U^+_F)$, so that we can write $u \in M_N(\C)\otimes C^*(\F U_F)$. Note that $\F U_F$ is unimodular if and only if $F$ is a scalar multiple of a unitary matrix.

\bigskip

The category of corepresentations of $\F U_F$ has been computed by Banica \cite{B97}. The elements of $I = \Irr(\G)$ can be
labelled by words $x$ on the letters $u$, $\bar u$ in such a way that the empty word $1$ corresponds to the trivial corepresentation, the one letter words $u$
and $\bar u$ correspond to the fundamental corepresentation $(u_{ij})$ and its unitary dual $F(u_{ij}^*)F^{-1}$, and we have the recursive fusion rules
\begin{equation}\label{eq_fusion_rules}
\begin{gathered} 
  xu\otimes uy \simeq xuuy, \quad xu\otimes \bar u y \simeq xu\bar uy \oplus x\otimes y, \\
  x\bar u\otimes \bar uy \simeq x\bar u\bar uy, \quad x\bar u\otimes u y \simeq
  x\bar uuy \oplus x\otimes y.
\end{gathered}    
\end{equation}
In particular $w$ is a subobject of $x\otimes y$ {\bf iff} we can write $x=x'v$, $y=\bar v y'$ and $w=x'y'$, and for $x$,
$y\in I \setminus \{1\}$ we have $xy \simeq x\otimes y$ {\bf iff} the last
letter of $x$ equals the first of $y$.
Note that we identify a word in $u$, $\bar u$ and a representant for the
corresponding equivalence class of irreducible corepresentations.  We denote $p_x\in\ell^\infty(\G)$ the
minimal central projection corresponding to $x$, and this yields an
identification $Z(\ell^\infty(\G)) = \ell^\infty(I)$. The length of $x\in I$ as
a word on $u$, $\bar u$ is denoted $|x|$ and we put
$I_n = \{x \in I \mid |x| = n\}$,
$p_n = \sum_{|x| = n} p_x \in \ell^\infty(\G)$. We denote $z\geq x$ if $z = xy$
for some $y\in I$, and we put $I(x) = \{z\in I \mid z\geq x\}$.

It follows from the fusion rules that inclusions of irreducibles
$x\subset y\otimes z$ are always multiplicity free and we choose corresponding
isometric intertwiners $V(x,y\otimes z)$, which are unique up to a phase. We
also denote $P(x,y\otimes z) = V(x,y\otimes z)V(x,y\otimes z)^* \in B(H_y\otimes H_z)$ the corresponding range projections, and we put $P(x,y\otimes z) = 0$ if $x\not\subset y\otimes z$. These intertwiners can be used to compute the coproduct of $c_0(\G)$: for $a\in p_x c_0(\G) \simeq B(H_x)$ and $y$, $z\in I$ we have
\begin{displaymath}
    (p_y\otimes p_z)\Delta(a) = V(x,y\otimes z) a V(x,y\otimes z)^*.
\end{displaymath}

Let $q$ be the unique number in $]0,1]$ such that
$q+q^{-1} = \qdim(u) = \qdim(\bar u)$. Note that $q = 1$ if and only if $N=2$
and $\F U_F$ is unimodular, i.e.\ $Q_u = I_2 = Q_{\bar u}$. For a letter $\alpha = u$ or $\bar u$, denote
$\alpha^{(k)} = \alpha \bar\alpha \alpha \ldots$ ($k$ terms) and
$\alpha^{(\infty)}$ the infinite alternating word starting with $\alpha$. We
have
$\bar\alpha \otimes \alpha^{(k)} \simeq
\bar\alpha^{(k+1)}\oplus\bar\alpha^{(k-1)}$ for $k\geq 1$ and it follows easily
that $\qdim(\alpha^{(k)}) = [k+1]_q$, using the $q$-numbers
$[n]_q = (q^{-n}-q^n)/(q^{-1}-q)$. Then, decomposing $x\in I$ as
$x = x_1\otimes\cdots\otimes x_p$ where each $x_i$ is of the form
$\alpha^{(k)}$, we obtain $\qdim(x) = [|x_1|+1]_q\cdots[|x_p|+1]_q$.

We will need the following lemma about Woronowicz' modular matrices $Q_x$.

\begin{lem} \label{lem_woro_matrices}
  Denote $\rho = \max(\|Q_u\|, \|Q_{\bar u}\|)$. For all $x\in I$ we have $\|Q_x\|/\qdim(x) \leq (q\rho)^{|x|}$ and $\|Q_x^{-1}\|/\qdim(x) \leq (q\rho)^{|x|}$. Moreover if $N\geq 3$ we have $q\rho < 1$.
\end{lem}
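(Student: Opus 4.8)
The plan is to decouple the estimate into a purely categorical bound on $\|Q_x\|$ and a lower bound on $\qdim(x)$, and then to treat the inequality $q\rho<1$ as an elementary spectral problem for the single matrix $Q_u$. I would use two standard functoriality properties of Woronowicz' matrices, both consequences of the formula $Q_w^z=(\id\otimes f_z)(w)$ together with the multiplicativity of the characters $f_z$: first, $Q$ is multiplicative on tensor products, $Q_{y\otimes z}=Q_y\otimes Q_z$; second, $Q$ is natural, i.e.\ $TQ_w=Q_{w'}T$ for every intertwiner $T\in\Hom(w,w')$. In particular, if $V\colon H_x\to H_w$ is an isometric intertwiner realising $x$ as a subobject of $w$, then $Q_x=V^*Q_wV$, so that $\|Q_x\|\le\|Q_w\|$.

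Writing $x=\ell_1\cdots\ell_n$ with each $\ell_i\in\{u,\bar u\}$ and $n=|x|$, the fusion rules show (taking $v=1$ repeatedly in the subobject criterion) that $x$ is an iterated subobject of $\ell_1\otimes\cdots\otimes\ell_n$. Combining the two functoriality facts with submultiplicativity of the operator norm under tensor products gives
\[
\|Q_x\|\ \le\ \|Q_{\ell_1}\otimes\cdots\otimes Q_{\ell_n}\|\ =\ \prod_{i=1}^n\|Q_{\ell_i}\|\ \le\ \rho^{|x|}.
\]
On the other hand, decomposing $x=x_1\otimes\cdots\otimes x_p$ into alternating blocks and using $[m]_q\ge q^{-(m-1)}$ for the $q$-numbers yields $\qdim(x)=\prod_j[|x_j|+1]_q\ge\prod_j q^{-|x_j|}=q^{-|x|}$. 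Hence $\|Q_x\|\le\rho^{|x|}=(q\rho)^{|x|}q^{-|x|}\le(q\rho)^{|x|}\qdim(x)$, which is the first inequality. The bound on $\|Q_x^{-1}\|$ follows by applying the first one to the conjugate word $\bar x$, using $\|Q_x^{-1}\|=\|Q_{\bar x}\|$, $\qdim(\bar x)=\qdim(x)$, $|\bar x|=|x|$, and the invariance of $\rho$ under $u\leftrightarrow\bar u$.

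The substantive point, and the step I expect to be the main obstacle, is the strict inequality $q\rho<1$ for $N\ge3$. Here I would work entirely with the positive matrix $Q_u$, whose eigenvalues $\lambda_1\ge\cdots\ge\lambda_N>0$ satisfy the two constraints $\sum_i\lambda_i=\Tr(Q_u)=\qdim(u)=q+q^{-1}$ and $\sum_i\lambda_i^{-1}=\Tr(Q_u^{-1})=q+q^{-1}$ (these identities coming from $\qTr_u(a)=\Tr(Q_ua)$ and the normalisation $\qtr_u(\id)=1$), while $\rho=\max(\lambda_1,\lambda_N^{-1})$ since the eigenvalues of $Q_{\bar u}$ are the $\lambda_i^{-1}$. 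It suffices to prove $\lambda_1<q^{-1}$, the bound $\lambda_N^{-1}<q^{-1}$ then following from the symmetric argument applied to $Q_u^{-1}$. Arguing by contradiction, suppose $\lambda_1\ge q^{-1}$ and set $s=q+q^{-1}$. The remaining $N-1\ge2$ eigenvalues satisfy $\sum_{i\ge2}\lambda_i=s-\lambda_1$ and $\sum_{i\ge2}\lambda_i^{-1}=s-\lambda_1^{-1}$, so Cauchy--Schwarz gives $(N-1)^2\le(s-\lambda_1)(s-\lambda_1^{-1})$. But $\lambda_1\ge q^{-1}\ge1$ forces $\lambda_1+\lambda_1^{-1}\ge q^{-1}+q=s$ (monotonicity of $t\mapsto t+t^{-1}$ on $[1,\infty)$), whence $(s-\lambda_1)(s-\lambda_1^{-1})=s^2-s(\lambda_1+\lambda_1^{-1})+1\le1$. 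This yields $(N-1)^2\le1$, contradicting $N\ge3$, and completes the argument. The delicate part is thus entirely elementary once the correct two constraints on the spectrum of $Q_u$ are isolated; everything preceding it is routine use of the monoidal naturality of the $Q$-matrices.
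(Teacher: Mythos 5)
Your proof is correct and, for the two norm inequalities, follows essentially the same route as the paper: realize $x$ as a subobject of $\ell_1\otimes\cdots\otimes\ell_n$ so that $Q_x$ is a compression (diagonal block) of $Q_{\ell_1}\otimes\cdots\otimes Q_{\ell_n}$, giving $\|Q_x\|\leq\rho^{|x|}$, and combine with $\qdim(x)\geq q^{-|x|}$; your detour through $\bar x$ for the bound on $\|Q_x^{-1}\|$ is a harmless variant of the paper's direct use of the $Q_{\ell_i}^{-1}$ together with $Q_{\bar u}=\bar Q_u^{-1}$. The only genuine difference is in the last step: to get $q\rho<1$ the paper exploits the existence of a third eigenvalue to obtain the strict inequality $q+q^{-1}=\Tr(Q_u)>\|Q_u\|+\|Q_u\|^{-1}$ and then invokes monotonicity of $t\mapsto t+t^{-1}$, whereas you assume $\lambda_1\geq q^{-1}$ and apply Cauchy--Schwarz to the remaining $N-1$ eigenvalues to force $(N-1)^2\leq (s-\lambda_1)(s-\lambda_1^{-1})\leq 1$, a contradiction. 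Both arguments rest on the same two constraints $\Tr(Q_u)=\Tr(Q_u^{-1})=q+q^{-1}$; yours is slightly sharper in that the inequality $(N-1)^2\leq(s-\lambda_1)(s-\lambda_1^{-1})$ would yield an explicit upper bound on $\rho$ in terms of $N$ and $q$, though this extra precision is not needed for the lemma.
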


\begin{proof}  
  If $x = \alpha_1\cdots\alpha_n$, with $\alpha_i \in \{u,\bar u\}$, $n = |x|$, we have in particular $x\subset v := \alpha_1\otimes\cdots\otimes\alpha_n$,
  hence $Q_x$ appears as a diagonal block of $Q_{\alpha_1}\otimes\cdots\otimes Q_{\alpha_n}$ via the decomposition of $v$ into irreducibles. As a result
  $\|Q_x\|\leq \prod_1^n \|Q_{\alpha_i}\| \leq \rho^n$. The same reasoning holds for $Q_x^{-1}$ with the $Q_{\alpha_i}^{-1}$'s, and since $Q_{\bar u} = \bar Q_u^{-1}$ we get the same upper bound.
  On the other hand we have $[k+1]_q = q^{-k} (1-q^{2k+2})/(1-q^2) \geq q^{-k}$ for all $k$, hence
  $\qdim(x) \geq q^{-|x|}$ for all $x\in I$. In particular $\|Q_x\|/\qdim(x) \leq (q\rho)^{|x|}$.
  
  Since $\Tr(Q_u) = \Tr(Q_u^{-1})$ we must have $\|Q_u\|$, $\|Q_u^{-1}\| \geq 1$. Assume $N\geq 3$. If $\|Q_u\| \geq \|Q_u^{-1}\|$ we have
  $q+q^{-1} = \Tr(Q_u) > \|Q_u\|+\|Q_u^{-1}\|^{-1} \geq \|Q_u\|+\|Q_u\|^{-1}$, since $Q_u$ has at least a third eigenvalue beyond $\|Q_u\|$ and
  $\|Q_u^{-1}\|^{-1}$. The same holds if $\|Q_u\| \leq \|Q_u^{-1}\|$ by writing instead
  $q+q^{-1} = \Tr(Q_u^{-1}) > \|Q_u^{-1}\|+\|Q_u\|^{-1} \geq \|Q_u\|+\|Q_u\|^{-1}$. Since $t\mapsto t+t^{-1}$ is strictly increasing on $[1,+\infty[$ this
  implies $q^{-1}>\|Q_u\|$. Since we also have $q+q^{-1} = \qdim(\bar u) = \Tr(Q_{\bar u}) = \Tr(Q_{\bar u}^{-1})$, we have similarly $q^{-1}>\|Q_{\bar u}\|$,
  hence $q^{-1}>\rho$.
\end{proof}

\bigskip

The construction of the Gromov compactification $\beta_G\F U_F$ of $\F U_F$
relies on the following ucp maps, defined for $x$, $y\in \Irr(\G)$ and
$m\in\N$:
\begin{align*}
  &\psi_{x,xy} : B(H_x)\to B(H_{xy}), ~ a\mapsto V(xy,x\otimes y)^*(a\otimes{\id}_y)V(xy,x\otimes y), \\
  &\psi_{l,m} = \sum\nolimits_{\substack{|x|=l \\ |y|=m-l}} \psi_{x,xy} : p_l \ell^\infty(\G) \to p_m\ell^\infty(\G) \quad \text{for $m\geq l$,} \\
  &\psi_{x,\infty} =  \sum_{y\in I} \psi_{x,xy} : B(H_x) \to \ell^\infty(\G), \quad
    \psi_{m,\infty} = \sum_{|x| = m}  \psi_{x,\infty} : p_m\ell^\infty(\G) \to \ell^\infty(\G).
\end{align*}
These maps define an inductive system, in the sense that $\psi_{xy,xyz}\circ \psi_{x,xy} = \psi_{x,xyz}$. The quantum Gromov compactification $\beta_G\F U_F$ is then given by the unital $C^*$-subalgebra $B = C(\beta_G \F U_F) = \bar B_0 \subset \ell^\infty(\G)$, which is the closure of
\begin{displaymath}
  B_0 = \{\psi_{m,\infty}(a) ; m\in\N, a\in p_m\ell^\infty(\G)\}.
\end{displaymath}
Finally the quantum Gromov boundary of $\F U_F$ is given by $C(\partial_G\F U_F) = B_\infty = B/c_0(\G)$, see \cite{VV07,VVV10}.

Note that as a free monoid the set $I$ has a natural Cayley tree structure (with respect to the generating set $\{u,\bar u\}$), and recall that $Z(\ell^\infty(\F U_F)) \simeq \ell^\infty(I)$. The restriction of $\psi_{x,xy}$ to the center is the canonical map $\id_x\mapsto\id_{xy}$ and it then follows
that $\ell^\infty(I)\cap C(\beta_G \F U_F)  = C(\beta I)$, where $\beta I$ is the usual compactification of the tree $I$ and $C(\beta I)$ is identified with a subalgebra of $\ell^\infty(I)$ via the restriction map. The boundary $\partial I = \beta I\setminus I$ is canonically identified with the set of infinite words in $u$, $\bar u$. Then $C(\partial_G\F U_F)$ is a continuous field of unital $C^*$-algebras over $\partial I$ \cite[Lemma~3.3]{VVV10}.

We will make important use of the central projections
$\pi_x = \psi_{x,\infty}(p_x)\in C(\beta_G \F U_F)$, for $x\in I$. More precisely we have $\pi_x = \sum_{y\in I} p_{xy}$, since
$V(xy,x\otimes y)^*V(xy,x\otimes y) = \id_{xy}$. This is a projection in
$\ell^\infty(I)\cap C(\beta_G \F U_F)$, whose image in $C(\partial I)$ corresponds to the subset
$\partial I(x) \subset \partial I$ of infinite words starting with $x$. Any
state $\nu$ on $C(\partial_G\F U_F)$ induces by restriction a probability measure on
$\partial I$ that we denote $\nu_I$. We have then
$\nu_I(\partial I(x)) = \nu(\pi_x)$ by definition.

The action $\beta : B \to M(c_0(\G)\otimes B)$ of $\G$ on $B$ is just given by
the restriction of the comultiplication. We have in particular
$(p_z\otimes p_t)\beta(\pi_x) = \sum_{y\in I} P(xy,z\otimes t)$, which is
non-zero iff $z\otimes t$ contains an irreducible corepresentation starting with
$x$. We will need the following description of the boundary action, which is implicit in \cite{VVV10}:

\begin{lem}\label{lem_boundary_action}
  Fix $k$, $n\in\N$ and $\epsilon > 0$. Denote $p_{\geq r} = \sum_{l\geq r} p_l$. Then there exists $r\geq k+n$ such that for all $a_k\in p_k \ell^\infty(\F U_F)$, $\|a_k\|\leq 1$ we have
  \begin{displaymath}
    \|(p_n\otimes p_{\geq r})\Delta(\psi_{k,\infty}(a_k)) - (p_n\otimes \psi_{r,\infty})(b)\|\leq\epsilon,
  \end{displaymath}
  where $b = \sum_{l=0}^n b_{n+r-2l}$,
  $b_s = (p_n\otimes p_r)\Delta(\psi_{k,s}(a_k))$.
\end{lem}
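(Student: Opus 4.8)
The plan is to compare the two sides block by block. Via the identification $\ell^\infty(\F U_F)=\prod_{w\in I}B(H_w)$, both sides are supported on the components indexed by pairs $(Y,Z)$ with $|Y|=n$ and $|Z|\geq r$, and the operator norm of such an element is the supremum of the norms of its $(Y,Z)$-blocks; so it suffices to estimate each block separately and uniformly in $\|a_k\|\le1$. Writing $a_k=(a_x)_{|x|=k}$ and using the coproduct formula $(p_Y\otimes p_Z)\Delta(a)=V(w,Y\otimes Z)\,a\,V(w,Y\otimes Z)^*$ together with the definition of $\psi_{x,w}$, the $(Y,Z)$-block of the left-hand side becomes
\[
  \sum_{w\subset Y\otimes Z} V(w,Y\otimes Z)\,\psi_{x,w}(a_x)\,V(w,Y\otimes Z)^*,
\]
where for each $w$ the word $x$ is its length-$k$ prefix. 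First I would expand the right-hand side the same way, applying $\psi_{W,Z}$ on the second leg of the $(Y,W)$-block of $b$, where $W$ denotes the length-$r$ prefix of $Z$ and $t$ the remaining suffix, so that $Z=Wt$.

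Next I would set up the combinatorial dictionary coming from the fusion rules \eqref{eq_fusion_rules}. For fixed $(Y,Z)$ every subobject $w\subset Y\otimes Z$ has the form $w=Y'Z'$ with $Y=Y'v$, $Z=\bar v Z'$ for a unique cancellation word $v$, and I would check that $w\mapsto w'$, where $w'$ is the length-$s$ prefix of $w$ with $s=|w|-|t|=n+r-2|v|$, is a bijection onto $\{w'\subset Y\otimes W\}$, with $w=w't$ the leading (concatenation) term. As $|v|$ ranges over $\{0,\dots,n\}$ the lengths $s$ range exactly over $\{r-n,\dots,r+n\}$, matching the terms $b_{n+r-2l}$ collected in $b$, so no block is created or lost. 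The hypothesis $r\ge k+n$ enters precisely here: it forces $s\ge r-n\ge k$, so the length-$k$ prefix $x$ of $w$ is already a prefix of $w'$, and hence the same operator $a_x$ enters both the left- and the right-hand block. Thus each matched pair carries the same ``value'' and only the intertwiners remain to be compared.

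With the dictionary in place the comparison reduces to an identity between intertwiners: after conjugating both blocks by the isometry $\id_Y\otimes V(Z,W\otimes t)$ (which preserves norms), each matched pair produces two isometric embeddings $H_w\to H_Y\otimes H_W\otimes H_t$, namely $(\id_Y\otimes V(Z,W\otimes t))V(w,Y\otimes Z)$, obtained by first splitting $Y\otimes Z$ and then $Z=W\otimes t$, and $(V(w',Y\otimes W)\otimes\id_t)V(w,w'\otimes t)$, obtained by first producing $w'$ inside $Y\otimes W$ and then appending $t$. Both encode the same cancellation of $v$ at the $Y$--$W$ junction with $t$ as a spectator, so by coassociativity of $\Delta$ and the coherence of the $V$'s their diagonal recoupling coincides; this provides the main term and shows that the two blocks agree up to the recoupling of $w$ through the remaining intermediate channels of $Y\otimes W\otimes t$.

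The main obstacle, and the only source of the error $\epsilon$, is to control this residual recoupling uniformly. I expect the off-diagonal recoupling coefficients to carry factors of the form $1/\qdim$ of objects living in the long overlap region, hence to be dominated by a quantity that decays in the length of $W$; the estimate $\qdim(x)\ge q^{-|x|}$ from the proof of Lemma~\ref{lem_woro_matrices} (and, when $N\ge3$, the sharper $(q\rho)^{|x|}$ bounds) is exactly what is needed to make them small. Because the defect is localized at the $Y$--$W$ junction and $t$ is a spectator, the per-block bound should be independent of $|Z|$ and of the individual $a_x$ with $\|a_x\|\le1$, so that a single large $r$ forces every block error below $\epsilon$. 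Making this localization precise and uniform over the infinitely many blocks is the delicate point; everything else is bookkeeping with the fusion rules and the definitions of the maps $\psi$.
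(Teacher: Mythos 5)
Your setup coincides with the paper's: both proofs reduce to a fixed block $(p_y\otimes p_z)\Delta(\psi_{k,\infty}(a_k))$ with $|y|=n$, $z=z_1z_2$, $|z_1|=r$, observe via the fusion rules that every subobject of $y\otimes z$ has the form $w=tz_2$ with $t\subset y\otimes z_1$ and $|t|\geq r-n\geq k$ (so that the same $a_x$ feeds both sides), and then compare the two isometric embeddings of $H_w$ into $H_y\otimes H_{z_1}\otimes H_{z_2}$ — one obtained by splitting $y\otimes z$ directly, the other by first forming $t\subset y\otimes z_1$ and then appending $z_2$. Up to that point your bookkeeping is correct and matches the paper.

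The gap is exactly where you flag it, and it is not a formality. You assert that ``by coassociativity of $\Delta$ and the coherence of the $V$'s their diagonal recoupling coincides'' and that the off-diagonal recoupling ``should'' be dominated by inverse quantum dimensions decaying in $r$. Neither claim is established, and the first is not literally true: the two isometries $H_{tz_2}\to H_y\otimes H_{z_1}\otimes H_{z_2}$ are genuinely different morphisms (coassociativity alone does not identify them), and the entire analytic content of the lemma is the quantitative statement that they agree, up to a phase, within $Cq^{(|t|+|z_1|-|y|)/2}\leq Cq^{(r-n)/2}$ in operator norm. The paper does not derive this from general categorical principles; it invokes a specific estimate on intertwiners in the representation category of $\F U_F$ (Lemma~7.8.3, equation~(8.47) of \cite{VVthesis}), which ultimately rests on norm estimates for the projections $P(w,y\otimes z)$ in the free fusion semiring. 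Without proving (or correctly citing) such an estimate, the argument does not close: the lower bound $\qdim(x)\geq q^{-|x|}$ from Lemma~\ref{lem_woro_matrices} that you propose to use bounds quantum dimensions, not the overlap of two recoupling channels, and by itself gives no control on the discrepancy. Once that estimate is in hand, the rest of your argument (uniformity over blocks via the supremum over $y,z$, the triangle inequality over the at most $n+1$ values of $s$, and the reduction to a single $a_x$ at the cost of a factor $2^k$) goes through as in the paper.
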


\begin{proof}
  The element $a_k$ can be decomposed into a sum of $2^k$ elements
  $a_x \in B(H_x)$ with $x\in I_k$. Up to replacing $\epsilon$ by
  $2^{-k}\epsilon$ and using the triangle inequality we can assume $a_k = a_x$
  for some $x \in I_k$.  Denote $\psi_{x,\infty}(a_x) = a$, so that $a_t = \psi_{x,t}(a_x)$ if $t\geq x$ and $a_t = 0$ else --- in particular $a_x = p_x a$ and $a_k = p_k a$ as expected. Take
  $r \geq k+n$.  We first consider $(p_y\otimes p_z)\Delta(a)$ with
  $|y| = n$, $z=z_1z_2$, $|z_1| = r$. Since $r\geq n$, subobjects of $y\otimes z$ must be of the form $tz_2$ with $t\subset y\otimes z_1$, hence we have
  \begin{displaymath}
    (p_y\otimes p_z)\Delta(a) = \sum_{tz_2\subset y\otimes z_1z_2}
    V(tz_2,y\otimes z_1z_2)a_{tz_2}V(tz_2,y\otimes z_1z_2)^*.
  \end{displaymath}
  Since $|t| \geq |z_1|-|y| \geq |x|$ we have $a_{tz_2} = \psi_{t,tz_2}(a_t)$ 
  --- both terms vanish unless $t\geq x$.
  Recall from \cite[Lemma~7.8.3, equation~(8.47)]{VVthesis} that we have
  ${V(tz_2,y\otimes z_1z_2)}{V(tz_2,t\otimes z_2)^*} \simeq \mu (\id_y\otimes
  {V(z_1z_2,z_1\otimes z_2)^*})$ $({V(t,y\otimes z_1)}\otimes\id_{z_2})$ up to
  $C q^{(|t|+|z_1|-|y|)/2} \leq C q^{(r-n)/2}$ in operator norm, for some $\mu\in\C$ such that $|\mu|=1$. This yields
  \begin{align*}
    &V(tz_2,y\otimes z_1z_2)a_{tz_2}V(tz_2,y\otimes z_1z_2)^* = \\
    &\hspace{3cm} = V(tz_2,y\otimes z_1z_2)V(tz_2,t\otimes z_2)^*(a_t\otimes\id_{z_2})
      V(tz_2,t\otimes z_2)V(tz_2,y\otimes z_1z_2)^* \\
    &\hspace{3cm} \simeq (\id_y\otimes\psi_{z_1,z_1z_2})(V(t,y\otimes z_1)a_tV(t,y\otimes z_1)^*) = (p_y\otimes p_z\psi_{r,|z|})\Delta(a_t)
  \end{align*}
  up to $2Cq^{(r-n)/2}$. Putting $s = |t|$, $t$ is the only subobject of $y\otimes z_1$ of length $s$, so that
  $(p_y\otimes p_{z_1})\Delta(a_t) = (p_y\otimes p_{z_1})\Delta(p_s\psi_{x,\infty}(a_x)) = (p_y\otimes p_{z_1})b_s$. Summing over $t$ we obtain
  \begin{displaymath}
    \|(p_y\otimes p_z)\Delta(a) - (p_y\otimes p_z\psi_{r,|z|})(b)\| \leq 2C(n+1)q^{(r-n)/2},
  \end{displaymath}
  since there are at most $n+1$ possible values of $s$. For $r$ large enough this yields the result, by taking the supremum over $y$ and $z$.
\end{proof}

\section{The Powers Averaging Property}

\subsection{Definition and Basic Results}
\label{sec_def_PAP}

Powers' averaging property is a combinatorial technique originally used by
Powers in \cite{P75} to prove that free groups are $C^*$-simple with the unique
trace property. Later, it was shown independently by Kennedy \cite{K20} and
Haagerup \cite{H16} that the converse holds, namely that every $C^*$-simple
group has Powers' averaging property. We introduce below an analogue of Powers' 
averaging property for discrete quantum groups and show that it still implies
$C^*$-simplicity.

Denote $\Prob(\G)$ the convex space of normal states on $\ell^\infty(\G)$ and
$\Probc(\G)$ the subspace of states with finite support, i.e.\ states $f$ for
which there exists a projection $p\in c_c(\G)$ with $f(p)=1$.  We denote
$h\in S(\Cred\G)$ the Haar state and we consider the following convolution
operations, for $x\in\Cred\G$, $f\in\Prob(\G)$, $\mu\in S(\Cred\G)$:
$x*f = (f\otimes \id)\ad(x)$, $f*\mu = (f\otimes \mu)\circ\ad$,
$\mu*x = (\id\otimes \mu)\Delta(x)$, $x*\mu = (\mu\otimes\id)\Delta(x)$. The
state $\mu$ on $\Cred\G$ is called $\G$-invariant if we have $f*\mu = \mu$ for
all $f\in\Prob(\G)$. The Haar state $h$ is $\G$-invariant in the classical case,
but not always in the general quantum framework, and this motivates the following definition.

\begin{defn}\label{Probh Def}
  We define $\Probh(\G) = \{f\in \Prob(\G) : f*h = h\}$ and
  $\Probhc(\G) = \Probh(\G) \cap \Probc(\G)$.
\end{defn}

From the preliminaries we have important examples of states in $\Probhc(\G)$, namely the quantum traces $\qtr_w$. Before defining the \PAP{} let us note the following stronger stabilization property for elements of $\Probhc(\G)$.

\begin{lem}\label{KMS States and h-Invariant States}
  Let $\G$ be a DQG. If $f\in \Probhc(\G)$ then $f*\tau = \tau$ for every
  $\sigma$-KMS state $\tau\in S(\Cred\G)$.
\end{lem}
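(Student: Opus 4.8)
The plan is to show that for \emph{every} $\sigma$-KMS state $\mu$ on $\Cred\G$ the functional $f*\mu$ has the form $x\mapsto\mu(xZ)$ for a single element $Z\in\C[\G]$ depending only on $f$ (and not on $\mu$), and then to pin down $Z=1$ from the hypothesis $f*h=h$ by faithfulness of the Haar state. This is the quantum counterpart of the classical fact that traces are invariant under the conjugation action; the only extra input needed in the non-unimodular setting is that $f*h=h$ forces the ``twist'' $Z$ to be trivial.

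First I would write $f$ through its density matrices: since $f\in\Probc(\G)$ is a normal state of finite support on $\ell^\infty(\G)=\prod_{v\in\Irr(\G)}B(H_v)$, there are positive matrices $\rho_v\in B(H_v)$, only finitely many nonzero, with $f(a)=\sum_v\Tr(\rho_v a_v)$. Using $W=\bigoplus_v v$ and $\ad(x)=\bigoplus_v v^*(1\otimes x)v$, a direct expansion of $f*\mu=(f\otimes\mu)\ad$ in matrix coefficients gives, for any state $\mu$ and all $x\in\Cred\G$,
\[
  (f*\mu)(x)=\sum_v\sum_{a,b,t}(\rho_v)_{ab}\,\mu\!\left(v_{tb}^*\,x\,v_{ta}\right).
\]
The crucial step is to apply the KMS relation $\mu(AB)=\mu(B\sigma_{-i}(A))$ with $A=v_{tb}^*$. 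The coefficients $v_{tb}$ are entire analytic for $\sigma$, and since $\sigma_{-i}$ is implemented on coefficients by the modular matrices $Q_v$ (recall $Q_v^z=(\id\otimes f_z)(v)$), one computes $\sigma_{-i}(v_{tb}^*)=\sum_{m,p}(Q_v^{-1})_{mt}(Q_v^{-1})_{bp}v_{mp}^*$. Substituting this and factoring $x$ out on the left, the identity rearranges into $(f*\mu)(x)=\mu(xZ)$, where
\[
  Z=\sum_v\sum_{a,b,t,m,p}(\rho_v)_{ab}(Q_v^{-1})_{mt}(Q_v^{-1})_{bp}\,v_{ta}v_{mp}^*\ \in\ \C[\G].
\]
Two features are worth stressing here: the element $Z$ manifestly does not involve $\mu$ (only the data $\rho_v$ and $Q_v$), and the finite support of $f$ is precisely what guarantees that $Z$ is a genuine element of $\C[\G]$ rather than a formal infinite sum — this is where the hypothesis $f\in\Probc$ is used.

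To finish I would specialize to $\mu=h$. The Haar state is $\sigma$-KMS, so the displayed identity gives $(f*h)(x)=h(xZ)$, and the hypothesis $f\in\Probh$, i.e.\ $f*h=h$, yields $h\!\left(x(Z-1)\right)=0$ for all $x\in\Cred\G$. Since $h$ is faithful on $\Cred\G$, taking $x=(Z-1)^*$ forces $Z=1$. Feeding this back into the formula for an arbitrary $\sigma$-KMS state $\tau$ gives $(f*\tau)(x)=\tau(x\cdot 1)=\tau(x)$, i.e.\ $f*\tau=\tau$, as claimed.

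The main obstacle is the middle step: correctly identifying the action of $\sigma_{-i}$ on the conjugate coefficients $v_{tb}^*$ and tracking the matrix indices so that, after applying the KMS condition, all dependence on $\mu$ is confined to the single factor $\mu(x\,\cdot)$ while the remaining modular data assemble into one fixed element $Z$. Once this bookkeeping is carried out, the computation is structurally identical to the derivation of $(\qtr_w\otimes h)\ad=h$ given in the preliminaries — which, as one checks, used only the KMS relation for $h$ and not any further property specific to it — and the passage from $h$ to an arbitrary $\sigma$-KMS state becomes immediate via faithfulness of $h$.
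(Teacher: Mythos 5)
Your proof is correct and follows essentially the same route as the paper's: both arguments use the KMS relation to rewrite $f*\mu$ as $\mu$ composed with multiplication by a fixed element of $\C[\G]$ depending only on $f$ (the paper obtains $(fS\otimes\id)(W^*(Q^2\otimes 1)W)$ via Sweedler/leg manipulations of $W$ where you track matrix coefficients explicitly), and then the hypothesis $f*h=h$ together with faithfulness of $h$ forces that element to be $1$, after which the computation applies verbatim to any $\sigma$-KMS state. The only difference is notational bookkeeping, not substance.
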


\begin{proof}
  Note that we have $f\in\Probh(\G)$ {\bf iff}
  $h(x) = (f\otimes h)(W^*(1\otimes x)W)$ for all $x\in \C[\G]$. Assume that
  $f \in \Probc(\G)$. Recall that the KMS group of $h$ is implemented by the
  Woronowicz characters $f_z\in \C[\G]^*$ so that $h(xy) = h((f_{-1}*y*f_{-1})x)$ for all
  $x\in\Cred\G$, $y\in\C[\G]$. Denote $Q = (\id\otimes f_1)(W) = (Q_w)_w$, which is a
  positive unbounded multiplier of $c_0(\G)$. We will invoke Sweedler notation and write $W = W_{(1)}\otimes W_{(2)}$. Then we have, since
  $(S\otimes\id)(W) = W^*$ and $(S\otimes\id)(W^*) = (Q\otimes 1)W(Q^{-1}\otimes 1)$:
  \begin{align*}
    (f\otimes h)(W^*(1\otimes x)W) &= (f\otimes h)((1\otimes W_{(2)})W^*(Q^{-1}W_{(1)}Q^{-1}\otimes x)) \\
                                   &= (fS\otimes h)(W^*(Q^2\otimes 1)W(1\otimes x))
  \end{align*}
  for all $x\in\Cred\G$. Hence $f\in\Probc(\G)$ belongs to $\Probhc(\G)$ if and only if
  \begin{equation} \label{eq_prob_h}
    (fS\otimes\id)(W^*(Q^2\otimes 1)W) = 1.
  \end{equation}
  If this holds, we can roll back the computation with any $\sigma$-KMS state $\tau$
  instead of $h$, which yields the result.
\end{proof}

From~\eqref{eq_prob_h} we also see that we have $\Probhc(\G) = \Probc(\G)$ {\bf iff} $W^*(Q\otimes 1)W = 1$ {\bf iff}
$Q=1$, i.e.\ $\G$ is unimodular --- this has been known at least since \cite{I02}. Note that $\Probc(\G)$ is norm dense in $\Prob(\G)$, but the corresponding
result for $\Probh(\G)$ is not clear.

\begin{defn}\label{PAP Def}
  We say that $\G$ has the {\bf Powers averaging property} \PAP, resp.{}
  \PAPh, if for every $x = x^*\in \Cred\G$ we have
  \begin{displaymath}
    \inf_{f\in P}\|x*f - h(x)1\| = 0,
  \end{displaymath}
  where $P = \Probc(\G)$, resp.{}
  $\Probhc(\G)$. Note that
  \PAPh$\implies$\PAP.
\end{defn}

\begin{rem}
Let $\G = G$ be a discrete group. The classical \PAP, as formulated by Powers in \cite{P75}, and the formulation used in \cite{K20,H16}, is stated as follows: for every $x = x^*\in \Cred G$,
    \begin{displaymath}
      h(x)1 \in \overline\conv\{ \lambda(s)x\lambda(s)^* : s\in G \}.
    \end{displaymath}
    We can see that this formulation of the \PAP{} is equivalent to the one given
    in Definition \ref{PAP Def}. Indeed, for $s\in G$ and $x\in \Cred G$,
    $x*\delta_s = \lambda(s)x\lambda(s)^*$ and so
    \begin{displaymath}
      \overline\conv\{ \lambda(s)x\lambda(s)^* : s\in G \} = \overline{x*\Prob(G)}.
  \end{displaymath}
\end{rem}

In Definitions~\ref{Probh Def} and~\ref{PAP Def} we could have used the coaction $\ad^\op$ of $c_0(\G^\op)$, instead of $\ad$. Let us show that the resulting notions would have been the same. For this we denote $*^\op$ the convolution products associated with $\ad^\op$, we denote
 $\Probhc(\G^\op) = \{f\in\Probc(\G) : f*^\op h = h\}$ and we say e.g.\ that $\G^\op$ has the \PAPh{} if $\inf \{\|x*^\op f - h(x)1\| : f\in\Probhc(\G^\op)\} = 0$.

\begin{lem}
    A discrete quantum group $\G$ has the \PAP, resp.\ 
    \PAPh{} iff $\G^\op$ has the \PAP, resp.\ 
    \PAPh.
\end{lem}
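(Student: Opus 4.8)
The plan is to exploit the relation $\ad^\op(x) = (R\otimes\hat R)\ad(\hat R(x))$ recorded in the preliminaries, together with the basic properties of the unitary antipode $\hat R$ of $\Cred\G$ — that it is an isometric $*$-anti-automorphism with $\hat R^2=\id$, $\hat R(1)=1$, preserving adjoints, and satisfying the invariance $h\circ\hat R = h$ of the Haar state. The idea is that passing from $\G$ to $\G^\op$ should amount to nothing more than the substitution $x\mapsto\hat R(x)$ on the elements being averaged and the reindexing $f\mapsto f\circ R$ on the averaging states, and that this substitution leaves the relevant infimum unchanged. Since $(\G^\op)^\op=\G$ it will in fact suffice to establish an equality of the two infima.

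First I would establish a pointwise norm identity. Applying $(f\otimes\id)$ to $\ad^\op(x)=(R\otimes\hat R)\ad(\hat R(x))$ and pulling the second-leg $\hat R$ outside gives
\[
  x *^\op f = (f\otimes\id)(R\otimes\hat R)\ad(\hat R(x)) = \hat R\big((f\circ R\otimes\id)\ad(\hat R(x))\big) = \hat R\big(\hat R(x) * (f\circ R)\big).
\]
Writing $y=\hat R(x)$, and using that $\hat R$ is linear, isometric, unital, and that $h(x)=h(\hat R(x))=h(y)$, I would then deduce
\[
  \|x *^\op f - h(x)1\| = \big\|\hat R\big(y*(f\circ R) - h(y)1\big)\big\| = \|y*(f\circ R) - h(y)1\|.
\]
Note that $y=y^*$ whenever $x=x^*$, since $\hat R$ preserves adjoints, so the substitution respects the self-adjointness hypothesis in Definition~\ref{PAP Def}.

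Second I would check that $f\mapsto f\circ R$ is an involutive bijection of $\Probc(\G)$ that carries $\Probhc(\G^\op)$ bijectively onto $\Probhc(\G)$. Positivity and normality of $f\circ R$ follow because $R$ is a normal $*$-anti-automorphism of $\ell^\infty(\G)$, so $R(a^*a)=R(a)R(a)^*\geq 0$; unitality is immediate from $R(1)=1$; and $R^2=\id$ gives involutivity. Finite support is preserved because $R$ maps each block $B(H_w)$ onto $B(H_{\bar w})$ and hence sends a finite central projection to a finite central projection. For the $h$-invariance constraint, the same substitution $\ad^\op=(R\otimes\hat R)\ad\hat R$ together with $h\circ\hat R=h$ yields $f*^\op h = \big((f\circ R)*h\big)\circ\hat R$, so that $f*^\op h = h$ if and only if $(f\circ R)*h = h$; this is exactly the statement $f\in\Probhc(\G^\op)\iff f\circ R\in\Probhc(\G)$.

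Combining the two steps, as $f$ ranges over $\Probc(\G)$ (resp.\ over $\Probhc(\G^\op)$) the state $g=f\circ R$ ranges bijectively over $\Probc(\G)$ (resp.\ over $\Probhc(\G)$), whence
\[
  \inf_{f}\|x*^\op f - h(x)1\| = \inf_{g}\|y*g - h(y)1\|, \qquad y=\hat R(x).
\]
Since $x\mapsto\hat R(x)$ is a bijection of the self-adjoint part of $\Cred\G$, the left-hand infimum vanishes for every self-adjoint $x$ precisely when the right-hand one does, which is the asserted equivalence for both the \PAP{} and the \PAPh. I expect the only delicate point to be the bookkeeping in the second step — verifying that $f\mapsto f\circ R$ respects finite support and exactly matches the two $h$-invariance conditions — whereas the norm identity of the first step, resting on isometry of $\hat R$ and the invariance $h\circ\hat R=h$, is the genuine mechanism rendering the two properties interchangeable.
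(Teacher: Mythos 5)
Your proof is correct and follows essentially the same route as the paper: both rest on the identity $x*^\op f = \hat R(\hat R(x)*(f\circ R))$ derived from $\ad^\op = (R\otimes\hat R)\circ\ad\circ\hat R$, the isometric $*$-antiautomorphism property of $\hat R$ with $h\circ\hat R = h$, and the bijection $f\mapsto f\circ R$ matching $\Probhc(\G^\op)$ with $\Probhc(\G)$. Your write-up just spells out the bookkeeping (positivity, normality, finite support) that the paper leaves implicit.
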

\begin{proof}
    According to the formula $\ad^\op = (R\otimes\hat R)\circ\ad\circ\hat R$ we have 
    \begin{displaymath}
        x*^\op f := (f\otimes\id)\ad^\op(x) = \hat R(\hat R(x) * fR).
    \end{displaymath}
    Since $\hat R$ is a $*$-antiautomorphism such that $h\circ\hat R = h$, it follows that $\|x*^\op f-h(x)1\| = \|x'*fR-h(x')\|$ for $x' = \hat R(x)$. It clearly follows that the \PAP{} is the same for $\G$ and $\G^\op$. For the \PAPh{} it remains to check that $\{f\circ R : f\in\Probhc(\G^\op)\} = \Probhc(\G)$, which again follows from the connection between $\ad$ and $\ad^\op$ and the property $h\circ\hat R = h$.
\end{proof}

\begin{lem}\label{PAP Reduction}
  A DQG $\G$ has the \PAP, resp.\ 
  the \PAPh, if and only if for every
  $x = x^*\in \C[\G]\cap \ker(h)$ we have
  \begin{displaymath}
    \inf_{f\in P}\|x*f\| = 0,
  \end{displaymath}
  where $P = \Prob(\G)$, resp.\
  $\Probhc(\G)$.
\end{lem}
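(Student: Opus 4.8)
The plan is to pass from the definition of the \PAP{} (resp.\ \PAPh) to the stated criterion through three successive equivalences, decoupling the reduction on the self-adjoint element from the reduction on the averaging set $P$. First I would remove the scalar term and restrict to $\ker(h)$. For any state $f$ one has $1*f = (f\otimes\id)\ad(1) = (f\otimes\id)(W^*W) = 1$, whence $x*f - h(x)1 = (x-h(x)1)*f$ for every $x$. For self-adjoint $x$ the element $y := x-h(x)1$ is again self-adjoint (as $h(x)\in\R$) and lies in $\ker(h)$, and every self-adjoint element of $\ker(h)$ arises this way; so for any fixed $P$ the condition $\inf_{f\in P}\|x*f-h(x)1\|=0$ over all self-adjoint $x\in\Cred\G$ is equivalent to $\inf_{f\in P}\|y*f\|=0$ over all self-adjoint $y\in\Cred\G\cap\ker(h)$. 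This step is purely algebraic and applies uniformly to both versions.

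Next I would replace $\Cred\G$ by its dense Hopf-$*$-subalgebra $\C[\G]$. The key point is that $x\mapsto x*f$ is contractive uniformly in $f$: since $\ad$ is a $*$-homomorphism and slicing by a state is unital and completely positive, $\|x*f\|\le\|x\|$ for all $f\in P$. Given self-adjoint $y\in\Cred\G\cap\ker(h)$ and $\epsilon>0$, I would approximate $y$ in norm by an element of $\C[\G]$, symmetrize, and subtract its (necessarily small) Haar value, producing a self-adjoint $y'\in\C[\G]\cap\ker(h)$ with $\|y-y'\|<\epsilon$; then $\inf_{f\in P}\|y*f\|\le\inf_{f\in P}\|y'*f\|+\epsilon$. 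Letting $\epsilon\to0$ shows that the criterion over $\C[\G]\cap\ker(h)$ forces it over all of $\Cred\G\cap\ker(h)$, the converse being trivial. This again works verbatim for $P=\Probc(\G)$ and for $P=\Probhc(\G)$.

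Finally, for the \PAP{} only, I would enlarge the averaging set from $\Probc(\G)$ to $\Prob(\G)$. As $\Probc(\G)\subseteq\Prob(\G)$, enlarging $P$ can only lower the infimum, giving one implication at once. For the converse I would combine the estimate $\|x*f-x*g\|\le\|x\|\,\|f-g\|$ (valid because slicing by $f-g\in\ell^1(\G)$ is bounded by its norm) with the norm density of $\Probc(\G)$ in $\Prob(\G)$ recorded before the statement, approximating any $f\in\Prob(\G)$ by finitely supported states and thereby getting $\inf_{f\in\Probc(\G)}\|x*f\|=\inf_{f\in\Prob(\G)}\|x*f\|$. For the \PAPh{} this enlargement is deliberately \emph{omitted}, and $P=\Probhc(\G)$ is retained, precisely because it is unclear (as noted above) whether $\Probhc(\G)$ is dense in $\Probh(\G)$.

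I do not anticipate a genuine obstacle: the only points needing care are the uniform contractivity of $x\mapsto x*f$ used in the second reduction and the $\ell^1$-continuity of $f\mapsto x*f$ used in the third, both standard facts about slice maps; the one conceptual subtlety is recognizing why the passage from $\Probc(\G)$ to $\Prob(\G)$ is legitimate for the \PAP{} but must be avoided for the \PAPh.
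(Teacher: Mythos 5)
Your argument is correct and follows essentially the same route as the paper's (very terse) proof: translate by $h(x)1$ using $1*f=1$, pass from $\Cred\G$ to $\C[\G]$ via density and the uniform contractivity $\|x*f\|\le\|x\|$, and, for the \PAP{} only, trade $\Probc(\G)$ for $\Prob(\G)$ via the norm density recorded before the definition together with $\|x*f-x*g\|\le\|x\|\,\|f-g\|$. The only difference is that you make explicit this last enlargement of the averaging set (and correctly note why it must be avoided for the \PAPh), a point the paper leaves implicit.
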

  
\begin{proof}
  The direct implication is obvious. For the reverse one, we first restrict from
  $\Cred\G$ to $\C[\G]$ by density and because $\|x*f\|\leq \|x\|$ for any
  $x\in\Cred\G$, $f\in\Prob(\G)$. Then we apply the assumption to $x-h(x)1$
  and observe that $(x-h(x)1)*f = x*f-h(x)1$.
\end{proof}

In the case of the \PAPh{} we can remove the limitation to self-adjoint elements.

\begin{lem}\label{PAP Reduction to Self Adjoints}
  A DQG $\G$ has the \PAPh{} if and only if for every $x \in \Cred\G$,
  \begin{displaymath}
    \inf_{f\in P}\|x*f - h(x)1\| = 0,
  \end{displaymath}
  where $P = \Probhc(\G)$.
\end{lem}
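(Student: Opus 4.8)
The plan is as follows. One implication is immediate: the estimate for all $x\in\Cred\G$ contains, as a special case, the estimate for self-adjoint $x$, which is exactly the \PAPh. For the converse, assume $\G$ has the \PAPh{} and fix $x\in\Cred\G$ and $\varepsilon>0$. I would decompose $x=a+ib$ into its self-adjoint parts $a=(x+x^*)/2$, $b=(x-x^*)/(2i)$, so that $h(x)=h(a)+ih(b)$, and observe that it then suffices to find a \emph{single} $f\in\Probhc(\G)$ with $\|a*f-h(a)1\|<\varepsilon$ and $\|b*f-h(b)1\|<\varepsilon$, since the triangle inequality gives $\|x*f-h(x)1\|<2\varepsilon$. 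The difficulty --- and the whole point of the lemma --- is precisely that applying the \PAPh{} to $a$ and to $b$ separately yields two unrelated averaging states, so I must average the two parts \emph{simultaneously}.

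The mechanism I would use is that averaging states compose. Two facts underlie this. First, writing $f*g:=(f\otimes g)\Delta$ for the convolution of two normal states on $\ell^\infty(\G)$, the coaction identity $(\Delta\otimes\id)\ad=(\id\otimes\ad)\ad$ gives the associativity
\[
(y*f)*g=y*(f*g)\qquad(y\in\Cred\G).
\]
Second, $\Probhc(\G)$ is closed under this convolution. For the $h$-invariance part this is again coassociativity, $(f*g)*h=f*(g*h)=f*h=h$ whenever $f,g\in\Probh(\G)$; for the finite-support part it uses rigidity of $\Corep(\G)$: if $f,g$ are supported on the blocks indexed by finite sets $S,T\subset\Irr(\G)$, then $f*g$ is supported on the (finite) set of irreducibles $z$ occurring in some $x\otimes y$ with $x\in S$, $y\in T$, which follows from the block decomposition of $\Delta$ on $B(H_x)\otimes B(H_y)$ into the isotypic projections of $x\otimes y$, summing to the identity there.

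With these in place the argument proceeds by iteration. Apply the \PAPh{} to the self-adjoint $a$ to obtain $f_1\in\Probhc(\G)$ with $a*f_1=h(a)1+e$ and $\|e\|<\varepsilon$. The element $b*f_1$ is again self-adjoint (as $\ad$ is a $*$-homomorphism and $f_1$ a state) and satisfies $h(b*f_1)=(f_1*h)(b)=h(b)$ because $f_1\in\Probh(\G)$, so applying the \PAPh{} once more produces $f_2\in\Probhc(\G)$ with $\|(b*f_1)*f_2-h(b)1\|<\varepsilon$. Setting $f=f_1*f_2\in\Probhc(\G)$, associativity gives $b*f=(b*f_1)*f_2$, within $\varepsilon$ of $h(b)1$, while using $1*f_2=1$ and that $y\mapsto y*f_2$ is norm non-increasing,
\[
a*f=(a*f_1)*f_2=h(a)1+e*f_2,\qquad\|e*f_2\|\le\|e\|<\varepsilon.
\]
Thus $f$ averages both parts at once and the proof is complete.

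I expect the only genuinely delicate point to be the closure of $\Probhc(\G)$ under convolution, and within that the finite-support statement, which rests on the finiteness of each fusion product $x\otimes y$. The associativity, the norm estimate $\|y*f\|\le\|y\|$, the identity $1*f=1$, and the preservation $h(y*f)=h(y)$ for $f\in\Probh(\G)$ are all formal consequences of the definitions and the coaction axioms, so they should cause no trouble.
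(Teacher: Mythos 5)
Your proof is correct and follows essentially the same route as the paper's: decompose $x$ into self-adjoint parts, average the real part with $f_1$, then average $b*f_1$ (using $h(b*f_1)=h(b)$ since $f_1\in\Probh(\G)$) with $f_2$, and conclude via associativity of convolution and contractivity of $y\mapsto y*f_2$. Your additional verification that $\Probhc(\G)$ is closed under convolution (coassociativity for the $h$-invariance, finiteness of fusion products for the finite support) correctly fills in a step the paper only asserts.
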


\begin{proof}
  Assume that $\G$ has the \PAPh. Let $x = x_1 + \ii x_2\in \Cred\G$
  with $x_1=x_1^*$, $x_2=x_2^*$ and fix $\epsilon > 0$. Find $f_1\in P$ such
  that $\|x_1*f_1 - h(x_1)\| < \epsilon$.  Next, put $x_2' = x_2*f_1$ and find
  $f_2\in \Probh(\G)$ such that $\|x_2'*f_2 - h(x_2')1\| < \epsilon$. Let
  $f = f_1*f_2$, which is still in $P$. Since $h(x_2') = (f_1*h)(x_2) = h(x_2)$
  and $x_2'*f_2 = x_2*f$, we have $\|x_2*f-h(x_2)1\|<\epsilon$.  On the other
  hand, since $(f_2\otimes \id)\ad$ is ucp, hence contractive, we have
  \begin{displaymath}
    \|x_1*f - h(x_1)1\| = \|(x_1*f_1 - h(x_1)1)*f_2\|
    \leq \|x_1*f_1 - h(x_1)1\| < \epsilon.
  \end{displaymath}
  As a result 
  $\|(x_1 + \ii x_2)*f - h(x_1 + \ii x_2)1\| < 2\epsilon$.  The converse is
  obvious.
\end{proof}

\bigskip

We now verify that our main example $\F U_F$ has the \PAPh.  Banica proved in \cite{B97} that $\F U_F$ is $C^*$-simple with a unique $\sigma$-KMS state,
using an adaptation of Powers' averaging techniques on $\Corep(\F U_F)$. It should be noted that Banica also defined an analogue of Powers' averaging property
for DQGs, see \cite[Section~7, Definition~3]{B97}, which turns out to not be equivalent to ours. In fact, Banica remarked in \cite[Section 8]{B97} that $U^+_F$ does not have the PAP in his sense.
However, as we are about to see, what Banica actually proves is that it does have the PAP in our sense.

At heart of the proof of \cite[Theorem 3]{B97} is indeed the use of maps $\AD(u) : \C[\G] \to \C[\G]$ from \cite[Lemma 9]{B97}, for $u\in \Corep (\G)$, which
are given by the formula
\begin{gather*}
  \AD(u)(z) = (\Tr\otimes\id)[(Q_u^b\otimes 1)u(1\otimes z)u^*(Q_u^{-d}\otimes 1)]
\end{gather*}
where $b$, $d$ are real parameters. In other words and with our notation, $\AD(u)(z) = (\qTr_u^{b-d}\otimes\id)\ad^\op(z)$ $= z \astop \qTr_u^{b-d}$, where
$\qTr_u^{b-d}(a) := \Tr_u(Q_u^{b-d}a_u)$ for $a = (a_u)_u\in c_0(\F U_F)$. Note in particular that $\qTr_u^1 = \qTr_u$ and $\qTr^{-1}_u = \qTr'_u$.

\begin{prop}[\cite{B97}]\label{Banicas Result}
    The discrete quantum groups $\F U_F$ have the \PAPh.
\end{prop}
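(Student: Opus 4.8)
The plan is to read off the \PAPh{} for $\F U_F$ from Banica's averaging argument, after recording the right dictionary between his maps $\AD(u)$ and our convolution by elements of $\Probhc$. First I would apply Lemma~\ref{PAP Reduction} to reduce the problem to showing that for every self-adjoint $x\in\C[\G]\cap\ker h$ one has $\inf_f\|x*f\|=0$, the infimum taken over $f\in\Probhc(\G)$; and since, by the lemma identifying the \PAPh{} for $\G$ and for $\G^\op$, the property is insensitive to this passage, I would work instead with the opposite coaction $\ad^\op$ and the convolution $\astop$, aiming for $\inf\{\|x\astop f\|:f\in\Probhc(\G^\op)\}=0$. This is the exact form in which Banica's maps act.

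The key observation is that Banica's operators are, up to normalization, convolution by the quantum traces. Taking the parameter $b-d=-1$ in the formula recalled above gives $\AD(u)(z)=z\astop\qTr^{-1}_u=\qdim(u)\,(z\astop\qtr'_u)$, since $\qTr^{-1}_u=\qTr'_u$ and $\qtr'_u=(\qdim u)^{-1}\qTr'_u$. The normalized map $z\mapsto z\astop\qtr'_u$ is unital and is precisely convolution by $\qtr'_u$, which lies in $\Probhc(\G^\op)$ because $\qtr'_u$ has finite support and $(\qtr'_u\otimes h)\ad^\op=h$ was established in the preliminaries. Moreover a composition of such maps corresponds to convolution by the $\astop$-product $\qtr'_{u_1}\astop\cdots\astop\qtr'_{u_k}$, and this product again lies in $\Probhc(\G^\op)$: finite support is preserved, and the $\astop$-convolution of two $h$-invariant states is $h$-invariant by coassociativity of $\ad^\op$ (exactly as in the proof of Lemma~\ref{PAP Reduction to Self Adjoints}). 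Thus every operator that Banica builds from the $\AD(u)$'s corresponds, after dividing out the constants $\qdim(u_i)$, to convolution by a single admissible state $f\in\Probhc(\G^\op)$.

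With this dictionary in place, the substance of the proof is Banica's norm estimate: the content of \cite[Lemma~9, Theorem~3]{B97} is that for a self-adjoint $x$ with $h(x)=0$, a suitable product of these averaging operators makes $\|x\astop f\|$ arbitrarily small. I would import this estimate and read it through the correspondence above, obtaining $\inf\{\|x\astop f\|:f\in\Probhc(\G^\op)\}=0$ and hence the \PAPh{} for $\G^\op$, equivalently for $\F U_F$. The main obstacle is bookkeeping rather than new analysis: one must check that Banica's un-normalized, iterated operators really do correspond to convolution by genuine \emph{states} in $\Probhc$ (keeping track of the $\qdim(u_i)$ factors and of the $\ad$ versus $\ad^\op$ and $\G$ versus $\G^\op$ conventions), and confirm that the mode of convergence in \cite{B97} is precisely the norm infimum appearing in Definition~\ref{PAP Def}. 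Granting the combinatorial estimate of \cite{B97}, no further work is needed.
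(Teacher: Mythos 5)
Your proposal is correct and follows essentially the same route as the paper: reduce via Lemma~\ref{PAP Reduction}, pass to $\G^\op$ and the convolution $\astop$, identify Banica's $\AD(u)$ with $b-d=-1$ as (a multiple of) convolution by $\qTr'_u$ so that his normalized averaging operators become convolution by elements of $\Probhc(\G^\op)$, and import his norm estimate. The only detail worth adding is that Banica's map $T$ is a \emph{sum} $M_1^{-1}\AD(r_1)+M_2^{-1}\AD(r_2)+M_3^{-1}\AD(r_3)$, so one also needs convex combinations (not just $\astop$-products) of the $\qtr'_{r_i}$ to stay in $\Probhc(\G^\op)$, which they do.
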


\begin{proof}
  We claim that in the proof of \cite[Theorem 3]{B97} what Banica really showed
  is that $\F U_F^\op$ has the \PAPh{} (in our sense) and hence $\G = \F U_F$ has the \PAPh. Fix
  $z = z^* \in \C[\G]\cap \ker(h)$ and $\epsilon > 0$. In the proof of
  \cite[Theorem 3]{B97}, a unital linear map $F = V\circ W$, with $V$,
  $W : \Cred\G\to \Cred\G$, is constructed. Let us first describe the maps $V$
  and $W$.
  
  The unital map $W$ is constructed at \cite[Corollary 4]{B97} and given by
  $W = M^{-1}\AD(r)$, for a specific corepresentation $r\in I$ (a large enough power of $\bar uu$), the parameters
  $d = -b = \frac 12$ and $M>0$. The map $V$ is obtained from \cite[Corollary
  3]{B97}. Here, $V$ is of the form $V = T^m$, for some integer $m\geq 0$ and a
  unital map $T : \Cred\G\to\Cred\G$ obtained from \cite[Proposition
  8]{B97}. Similarly to $W$, $T$ is constructed from \cite[Lemma 9]{B97} and
  given by the formula
  $T = M_1^{-1} \AD(r_1) + M_2^{-1}\AD(r_2) + M_3^{-1}\AD(r_3)$ for some
  well-chosen corepresentations $r_1$, $r_2$, $r_3\in I$, the parameters
  $d = -b = 1/2$ and with $M_1$, $M_2$, $M_3>0$ (more precisely $r_i = \bar uu^i\bar u$).

  With our notation we have $T(z) = z\astop\varphi_1$ with $\varphi_1 = M_1^{-1}\qTr'_{r_1}+ M_2^{-1}\qTr'_{r_2} + M_3^{-1}\qTr'_{r_3}$ and $W(z) = z\astop \varphi_2$ with
  $\varphi_2 = M^{-1}\qTr'_r$. As noted in the Preliminaries, the maps $\qtr'_w$ belong to $\Probhc(\G^\op)$. Altogether, the unital
  map $F : \Cred\G\to \Cred\G$ is of the form $F(z) = z \astop \varphi$ for some $\varphi \in \Probhc(\G^\op)$. Moreover, $r$, $r_1$, $r_2$, $r_3$
  and $m$ are chosen so that, for our fixed $z\in\C[\G]$, one has $\|F(z)\| \leq \epsilon \|z\|$. The result thus follows from Lemma~\ref{PAP Reduction}.
\end{proof}

\bigskip

Such as in the classical case, the PAP easily implies $C^*$-simplicity and, in
the unimodular case, the unique trace property. Note that by \cite{KKSV22,AS22}
a state on $\Cred\G$ is $\G$-invariant {\bf iff} it is KMS with respect to
the scaling automorphism group $\tau$ {\bf iff} it is tracial.

\begin{prop}\label{PowersAveSimplicity}
    Let $\G$ be a DQG. If $\G$ has the \PAP{} then $\G$ is $C^*$-simple and
    \begin{enumerate}
    \item if $\G$ is unimodular then $h$ is the only $\G$-invariant state on $\Cred\G$;
    \item if $\G$ is non-unimodular then $\Cred\G$ has no $\G$-invariant states.
    \end{enumerate}
    If moreover $\G$ has the \PAPh{}, then $h$ is the unique $\sigma$-KMS
    state.
\end{prop}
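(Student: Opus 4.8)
The plan is to run the Powers averaging argument three times, once for each assertion; the only quantum subtleties are the behaviour of the ideal $I$ and of the Haar state $h$ under the adjoint action.

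\textbf{$C^*$-simplicity.} First I would check that for $x\in\Cred\G$ and $f\in\Probc(\G)$ the averaged element $x*f=(f\otimes\id)\ad(x)$ lies in any closed two-sided ideal $I$ containing $x$. Writing $f=\sum_{w\in S}f_w$ over a finite set $S$ of irreducibles with each $f_w$ supported on $B(H_w)$, one has $x*f=\sum_{w}(f_w\otimes\id)(w^*(1\otimes x)w)$, and every matrix coefficient of $w^*(1\otimes x)w$ is a finite sum of products $b\,x\,c$ with $b,c\in\Cred\G$, hence lies in $I$; this is the finite-support refinement of the module inclusion $\ad(I)(c_0(\G)\otimes\Cred\G)\subset c_0(\G)\otimes I$ already used in Proposition~\ref{SimplicityCondition}. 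Now take a nonzero closed two-sided ideal $I$, pick $0\neq a\in I$, and set $x=a^*a\in I$, a positive element with $h(x)>0$ since the Haar state is faithful on the reduced algebra $\Cred\G$. Applying the \PAP{} to the self-adjoint $x$ yields $f\in\Probc(\G)$ with $\|x*f-h(x)1\|$ arbitrarily small; as $x*f\in I$ and $I$ is closed we get $h(x)1\in I$, whence $1\in I$ and $I=\Cred\G$. Thus $\Cred\G$ is simple.

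\textbf{The averaging identity and invariant states.} The engine for the remaining statements is the identity $\mu(x*f)=(f\otimes\mu)\ad(x)=(f*\mu)(x)$, valid for any state $\mu\in S(\Cred\G)$, any $f\in\Prob(\G)$, and any $x\in\Cred\G$. Hence if $\mu$ satisfies $f*\mu=\mu$ for all $f$ in the set over which the averaging is performed, then for self-adjoint $x$ we get $|\mu(x)-h(x)|=|\mu(x*f-h(x)1)|\leq\|x*f-h(x)1\|$, so $\mu(x)=h(x)$ on self-adjoints and therefore $\mu=h$. Applying this with the \PAP{} (so $f$ runs over $\Probc(\G)\subset\Prob(\G)$) shows at once that \emph{any} $\G$-invariant state must equal $h$. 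It then remains to decide whether $h$ itself is $\G$-invariant, i.e.\ whether $\Probh(\G)=\Prob(\G)$. In the unimodular case $\Probhc(\G)=\Probc(\G)$, and since $\Probc(\G)$ is norm dense in $\Prob(\G)$ while $f\mapsto(f*h)(x)=f((\id\otimes h)\ad(x))$ is norm continuous in $f$ (using $\|\ad(x)\|=\|x\|$), we conclude $f*h=h$ for all $f$; so $h$ is the unique $\G$-invariant state, giving (1). In the non-unimodular case $\Probc(\G)\neq\Probhc(\G)$, so $h$ is \emph{not} $\G$-invariant, and combined with ``invariant $\Rightarrow h$'' this forces $\Cred\G$ to admit no $\G$-invariant state at all, giving (2).

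\textbf{Uniqueness of the $\sigma$-KMS state.} Assume the \PAPh. The Haar state $h$ is itself $\sigma$-KMS for its modular group (the relation $h(xy)=h(y\sigma_{-i}(x))$ recorded in the preliminaries). Given any $\sigma$-KMS state $\tau$, Lemma~\ref{KMS States and h-Invariant States} provides $f*\tau=\tau$ for all $f\in\Probhc(\G)$, which is precisely the set over which \PAPh{} averages; reducing to self-adjoint elements (Lemma~\ref{PAP Reduction to Self Adjoints}) and feeding this into the averaging identity yields $\tau=h$.

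I expect the only genuine obstacle to be the ideal-invariance step in the simplicity part, namely upgrading the module inclusion of Proposition~\ref{SimplicityCondition} to the statement $x*f\in I$ for finitely supported $f$, together with the careful bookkeeping in the non-unimodular case, where the single averaging argument must be read as simultaneously proving that an invariant state would have to be $h$ and that $h$ is not invariant, thereby excluding existence rather than establishing uniqueness.
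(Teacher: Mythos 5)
Your proposal is correct and follows essentially the same route as the paper: ideal-invariance under the adjoint action plus faithfulness of $h$ for simplicity, the averaging identity $\tau(x*f)=(f*\tau)(x)$ to force any invariant (resp.\ $\sigma$-KMS, via Lemma~\ref{KMS States and h-Invariant States}) state to equal $h$, and the dichotomy $\Probhc(\G)=\Probc(\G)$ iff $\G$ is unimodular to separate cases (1) and (2). Your extra details --- the explicit matrix-coefficient verification that $x*f\in I$ for finitely supported $f$, and the density/continuity argument showing $h$ is genuinely $\G$-invariant in the unimodular case --- are sound fillings-in of steps the paper leaves implicit (the latter is handled there by noting that tracial states are $\G$-invariant).
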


\begin{proof}
  Assume $\G$ has the PAP and let $I\subseteq \Cred\G$ be a closed two-sided
  ideal. As already noted in the proof of Proposition~\ref{SimplicityCondition},
  $\ad$ stabilizes $I$, in particular $x*f\in I$ for all $x\in I$,
  $f\in\Prob(\G)$.  If $I\neq 0$ we can find $x \in I_+$ such that $h(x) = 1$,
  since $h$ is faithful on $\Cred\G$. By the PAP there exists a net of states
  $f_\alpha$ in $\Prob(\G)$ such that $x*f_\alpha \to 1$, hence $1\in I$ and $I=\Cred\G$.

  Let $\tau\in S(\Cred\G)$ be a $\G$-invariant state, i.e.\ we have
  $f*\tau = \tau$ for all $f$ in $\Prob(\G)$ or, equivalently, in
  $\Probc(\G)$. Take $x=x^* \in\Cred\G$ and a corresponding net of states
  $f_\alpha$ given by \PAP. Then we have
  $\tau(x) = (f_\alpha*\tau)(x) = \tau(x*f_\alpha) \to h(x)\tau(1) =
  h(x)$. Since the self-adjoint elements span $\Cred\G$, this shows that
  $\tau = h$. If we assume \PAPh, by Lemma~\ref{KMS States and h-Invariant
    States} this computation works also for all $\sigma$-KMS states $\tau$.
  
  If $\G$ is unimodular, tracial states on $\Cred\G$ are $\G$-invariant, hence
  $h$ is the only trace. If $\G$ is not unimodular, $h$ is not $\G$-invariant
  (i.e.\ $\Probhc(\G) \neq \Probc(\G))$ hence the previous computation shows
  that $\Cred\G$ has no $\G$-invariant states, and if $\G$ has the \PAPh, it
  shows that $h$ is the only $\sigma$-KMS state.
\end{proof}
  
\begin{rem}
  The Dixmier property of a $C^*$-algebra $A$ holds whenever for every $a\in A$,
  $\overline\conv\{ uau^* : u\in \fU(A)\} \cap Z(A)\neq \{0\}$. In the case of a classical group $C^*$-algebra $A = C^*_r(G)$, the \PAP{} clearly implies the Dixmier property, using unitaries $u = \lambda(s)$, $s\in G$. This is not so clear anymore in the quantum case. However, it was
  established in \cite{HZ84} that if $A$ is simple, then $A$ has the Dixmier
  property if and only if $A$ has at most one tracial state. Hence
  Proposition~\ref{PowersAveSimplicity} together with \cite[Theorem~3.15]{AS22} shows that if
  $\G$ has the \PAP{} then $\Cred\G$ has the Dixmier property.
\end{rem}

\subsection{The PAP and the state space of $\Cred\G$}

Using Hahn-Banach one can rephrase the \PAPh{} in terms of functionals on
$\Cred\G$. This generalizes \cite[Proposition 6.1]{K20} and part of
\cite[Theorem 4.5]{H16}.

\begin{defn}\label{Envelopes Notation}
  For $\mu\in\Cred\G^*$ we denote $K(\mu)$, resp.\
  $K_h(\mu)$, the
  space $\overline{P*\mu}^{w^*}$, where $P = \Probc(\G)$, resp.\ 
  $\Probhc(\G)$.
\end{defn}

\begin{lem}\label{PowersAveLemma}
  Let $\G$ be a DQG. The following are equivalent:
  \begin{enumerate}
  \item $\G$ has the \PAPh;
  \item for every $\mu\in S(\Cred\G)$, $h\in K_h(\mu)$;
  \item for every $\mu\in \Cred\G^*$, $\mu(1)h\in K_h(\mu)$.
  \end{enumerate}
  The equivalence between 1.\ and 2.\ also holds for the \PAP{} using $K(\mu)$ instead of $K_h(\mu)$.
\end{lem}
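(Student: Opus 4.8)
The plan is to prove the chain of implications $1 \Rightarrow 3 \Rightarrow 2 \Rightarrow 1$, where the outer implications use Hahn--Banach separation and the middle one is a trivial specialization. The key geometric observation is that each $K_h(\mu) = \overline{P * \mu}^{w*}$ is a weak-$*$ closed convex subset of $\Cred\G^*$, being the weak-$*$ closure of the convex hull $P * \mu$ (convexity of $P*\mu$ follows since $P = \Probhc(\G)$ is convex and the map $f \mapsto f * \mu$ is affine). Convexity and weak-$*$ closedness are exactly what is needed to separate points from $K_h(\mu)$ by weak-$*$ continuous functionals, i.e.\ by elements of $\Cred\G$.

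For $1 \Rightarrow 3$, I would argue by contradiction: suppose $\mu(1)h \notin K_h(\mu)$ for some $\mu \in \Cred\G^*$. By the Hahn--Banach separation theorem applied to the weak-$*$ closed convex set $K_h(\mu)$ and the point $\mu(1)h$, there is a self-adjoint $x \in \Cred\G$ (the weak-$*$ continuous functionals on $\Cred\G^*$ are precisely evaluations at elements of $\Cred\G$, and one may take $x$ self-adjoint by working with real parts) and a real $c$ with $\mathrm{Re}(\nu(x)) \le c < \mathrm{Re}(\mu(1)h(x))$ for all $\nu \in K_h(\mu)$. Specializing $\nu = f * \mu$ for $f \in P$ gives $\mathrm{Re}((f*\mu)(x)) \le c$; but $(f * \mu)(x) = \mu(x * f)$ by the definition of the convolutions, so $\mathrm{Re}(\mu(x*f)) \le c < \mathrm{Re}(\mu(1)h(x)) = \mathrm{Re}(\mu(h(x)1))$. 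This forces $\|x*f - h(x)1\|$ to be bounded below uniformly over $f \in P$ (since $\mu$ is a fixed bounded functional), contradicting the \PAPh{} via Lemma~\ref{PAP Reduction to Self Adjoints}.

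The implication $3 \Rightarrow 2$ is immediate: a state $\mu \in S(\Cred\G)$ satisfies $\mu(1) = 1$, so statement 3 gives $h \in K_h(\mu)$ directly. For $2 \Rightarrow 1$, I would fix a self-adjoint $x \in \Cred\G$ with $h(x) = 0$ (reducing to this case as in Lemma~\ref{PAP Reduction}) and aim to show $\inf_{f \in P}\|x * f\| = 0$. The natural route is again separation, run in the opposite direction: if the infimum were some $\delta > 0$, then $0 \notin \overline{\{x * f : f \in P\}}^{\|\cdot\|}$, and since $\{x * f\}$ is norm-bounded and convex one separates $0$ from its norm-closure by a functional $\mu \in \Cred\G^*$, normalized to a state. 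Then $\mathrm{Re}(\mu(x*f))$ is bounded away from $0$ for all $f$, i.e.\ $\mathrm{Re}((f*\mu)(x))$ is bounded away from $0$; but statement 2 places $h$ in the weak-$*$ closure of $\{f * \mu\}$, and $h(x) = 0$, giving the contradiction $0 = h(x) = \lim (f_\alpha * \mu)(x)$ bounded away from $0$.

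\textbf{The main obstacle} I anticipate is bookkeeping around the distinction between norm and weak-$*$ topologies when passing between the two separation arguments: statement 2/3 is phrased with the weak-$*$ closure $K_h(\mu)$, whereas the \PAPh{} is a norm statement. In the direction $1 \Rightarrow 3$ this is harmless because norm-smallness implies weak-$*$ proximity. In $2 \Rightarrow 1$ one must be careful to convert the weak-$*$ membership $h \in K_h(\mu)$ back into a norm conclusion about $x * f$; the trick is to fix $x$ first, so that evaluation at $x$ is a single weak-$*$ continuous functional, and only the scalar quantities $(f*\mu)(x) = \mu(x*f)$ need to be controlled, sidestepping any need for norm convergence of the net $f_\alpha * \mu$ itself. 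A secondary point requiring care is the reduction to self-adjoint $x$ and the normalization $h(x)=0$, but these are handled cleanly by the already-established Lemmas~\ref{PAP Reduction} and~\ref{PAP Reduction to Self Adjoints}.
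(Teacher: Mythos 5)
Your overall method --- Hahn--Banach separation in both directions, resting on the weak-$*$ closedness and convexity of $K_h(\mu)$ --- is the same as the paper's, but you decompose the implications differently. You prove $1\Rightarrow 3$ in one step for an arbitrary bounded functional $\mu$, and this does work: the separating inequality $\Re(\nu(x))\le c<\Re(\mu(1)h(x))$ for all $\nu\in K_h(\mu)$ yields $\|x*f-h(x)1\|\ge(\Re(\mu(1)h(x))-c)/\|\mu\|$ uniformly in $f$, which contradicts the \PAPh{} precisely because Lemma~\ref{PAP Reduction to Self Adjoints} upgrades it to arbitrary, not necessarily self-adjoint, $x$. The paper instead proves $1\Rightarrow 2$ only for states (where symmetrization is possible) and then gets $2\Rightarrow 3$ by decomposing $\mu=\mu_1-\mu_2+\ii(\mu_3-\mu_4)$ with $\mu_i\ge 0$, applying statement 2 to each piece and using $w*$-compactness together with the $\Probhc(\G)$-invariance of $h$. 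Your route is shorter for the \PAPh{}. One parenthetical claim is wrong as stated, though harmless: for non-hermitian $\mu$ the set $K_h(\mu)$ is not invariant under $\nu\mapsto\bar\nu$, so you cannot in general take the separating element $x$ self-adjoint ``by working with real parts''; you simply do not need to, because Lemma~\ref{PAP Reduction to Self Adjoints} handles arbitrary $x$.

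The genuine gap is the final sentence of the statement: the equivalence $1\Leftrightarrow 2$ for the plain \PAP{} is not covered by your argument. Lemma~\ref{PAP Reduction to Self Adjoints} is a \PAPh{}-only fact (its proof uses $f_1*h=h$), and statement 3 is not even available for the \PAP{}, since $h$ need not be $\Probc(\G)$-invariant; so the chain $1\Rightarrow 3\Rightarrow 2$ collapses there. For that case one must prove $1\Rightarrow 2$ directly for a state $\mu$, using that $\mu$, $h$ and $P$ are all conjugation-invariant to replace the separating $x$ by $(x+x^*)/2$ and then invoke the \PAP{} for self-adjoint elements --- this is exactly what the paper does. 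Finally, your $2\Rightarrow 1$ mirrors the paper's, including the quick step ``normalize the separating functional to a state'': a Hahn--Banach functional separating $0$ from a convex set of self-adjoint elements is a priori only hermitian, not positive. The paper elides this in the same way, and in the \PAPh{} case the issue is absorbed by deducing 1 from statement 3 (valid for arbitrary functionals) rather than from 2; but since you route $2\Rightarrow 1$ through a state, you inherit the same unaddressed point.
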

  
\begin{proof}
  Write $P = \Probc(\G)$ resp.\ $P = \Probhc(\G)$.
  
  $(1\implies 2)$. Take $\mu\in S(\Cred\G)$. Using the Hahn-Banach separation
  theorem, if $h\notin \overline{P*\mu}^{w^*}$ then there exists $x\in \Cred\G$
  such that
  \begin{displaymath}
    \inf_{f\in P} \Re((f*\mu)(x) - h(x)) > 0.
  \end{displaymath}
  Since $\mu = \bar\mu$, $\bar h = h$ and $\bar P = P$, the same holds for $x^*$, hence 
  we can assume that $x^* = x$ and the \PAP{} resp.\ the \PAPh{} fails because
  $\Re((f*\mu)(x) - h(x)) \leq \|x*f-h(x)1\|$.

  $(2\implies 1)$. For a contradiction, assume that the \PAP{}, resp.\ the \PAPh{}
  fails. Let $x\in \Cred\G$, $x=x^*$, be such that $\inf_{f\in P}\|x*f - h(x)1\| > 0$. By
  the Hahn-Banach separation theorem, there exists $\epsilon > 0$ and
  $\mu \in \Cred\G^*$ such that
  $\Re((f*\mu)(x)) \geq \Re(\mu(1)h(x)) + \epsilon$ for all
  $f\in P$. Again we can assume that $\mu = \bar\mu$, hence $\mu\in S(\Cred\G)$ after renormalizing, and this contradicts $2$.
  
  $(3\implies 2)$ is obvious. 
  
  $(2\implies 3)$. Let $\mu : \Cred\G \to \C$ be a bounded functional. Decompose
  $\mu = \mu_1 - \mu_2 + \ii (\mu_3 - \mu_4)$ with $\mu_i \in \Cred\G^*_+$. By
  assumption, there exists a net $(f_\alpha)\subseteq P$ such that
  $f_\alpha *\mu_1\to^{w^*} \mu_1(1)h$. Using $w^*$-compactness of
  $\overline{P*\mu_i}^{w^*}$ for each $i$, we find $\nu_i\in \Cred\G^*_+$ such
  that after passing to a convergent subnet we have
  $\lim^{w^*}_\alpha f_\alpha*\mu_i = \nu_i$ (in particular $\nu_i(1) =
  \mu_i(1)$). This shows that
  $\mu_1(1)h - \nu_2 + \ii (\nu_3 - \nu_4)\in \overline{P*\mu}^{w^*}$. Using
  $P$-invariance of $h$, and by repeating the same argument, we deduce that
  $\overline{P*\mu}^{w^*}$ contains
  $\mu_1(1)h - \nu_2(1)h + \ii (\nu_3(1)h - \nu_4(1)h) = \mu(1)h$.
\end{proof}

The preceeding Lemma motivates the following definition and leads to
Theorem~\ref{GBoundariesPAP}.

\begin{defn}\label{Boundary Envelopes Def}
  We say that $X\subseteq S(\Cred\G)$ is a $\G$, resp.\ $\G_h$-{\bf
    boundary envelope} if $X\neq\emptyset$ and $X = K(\mu)$, resp.\ 
  $K_h(\mu)$, for every $\mu\in X$.  We say that such a boundary envelope
  $X$ is {\bf non-trivial} if $X\neq \{h\}$.
\end{defn}

\begin{rem}
  Since $\Probc(\G)$ and $\Probhc(\G)$ are convex, boundary
  envelopes are automatically convex and $w^*$-closed. In the classical case
  $G$-boundary envelopes correspond to $G$-boundaries inside the space
  $S(\Cred G)$, as shown in the next proposition. In the quantum case, it does
  not make sense to consider $\G$-boundaries inside $S(\Cred\G)$, since
  $\G$-boundaries are in general noncommutative. However {\em boundary
    envelopes}, as introduced in the previous definition, will prove useful also
  in the quantum case.
\end{rem}

\begin{prop}\label{GState Boundaries Classical Case}
    Let $\G = G$ be a discrete group. The following are equivalent for $X\subseteq S(\Cred G)$:
    \begin{enumerate}
        \item $X$ is a $G$-boundary envelope;
        \item $X = \overline{\conv}^{w^*}(Y)$ where $Y\subseteq S(\Cred G)$ is a $G$-boundary;
        \item $X = \Psi^*(S(C(\partial_F G)))$ for some $G$-equivariant ucp map
          $\Psi : \Cred G\to C(\partial_FG)$.
    \end{enumerate}
  \end{prop}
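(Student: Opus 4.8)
=== PROOF PROPOSAL ===

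The plan is to prove the three-way equivalence by a cyclic implication $(2)\implies(3)\implies(1)\implies(2)$, exploiting the group-case dictionary that identifies $G$-equivariant ucp maps $C^*_r(G)\to C(\partial_F G)$, $G$-boundaries inside $S(C^*_r G)$, and boundary envelopes. The key technical facts I would rely on are: first, that $C(\partial_F G)$ is the Furstenberg boundary and hence is $G$-injective and has the property that every Poisson map $\mathcal P_\nu$ is completely isometric; second, the classical description of the Poisson boundary, namely that for a ucp map $\Psi:C^*_r(G)\to C(\partial_F G)$ and a state $\nu$ on $C(\partial_F G)$, the composite $\nu\circ\Psi$ is a state on $C^*_r(G)$ whose orbit under $*$-convolution by $\operatorname{Prob}(G)$ behaves like the pushforward of a boundary; and third, Kennedy's observation that in the classical setting $G$-boundaries in $S(C^*_r G)$ correspond bijectively to such ucp maps.

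First I would establish $(3)\implies(1)$, which I expect to be the cleanest direction. Given a $G$-equivariant ucp map $\Psi:C^*_r(G)\to C(\partial_F G)$, set $X=\Psi^*(S(C(\partial_F G)))$, the pullback of the state space. Equivariance of $\Psi$ translates into the identity $\Psi(x*f)=(\text{action of }f)\cdot\Psi(x)$, so that the convolution action $P*\mu$ on $S(C^*_r G)$ corresponds, under $\Psi^*$, to the action of $G$ on $S(C(\partial_F G))$. Because $C(\partial_F G)$ is a $G$-boundary, its state space is a minimal strongly proximal $G$-space, and I would argue that for any state $\nu$ on $C(\partial_F G)$ the $w^*$-closure of the convex hull of the $G$-orbit of $\nu$ is all of $S(C(\partial_F G))$ --- this is precisely strong proximality. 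Pulling this back via $\Psi^*$ gives $\overline{P*\mu}^{w*}=X$ for every $\mu\in X$, which is the defining property of a $G$-boundary envelope, and $X\neq\emptyset$ since $S(C(\partial_F G))\neq\emptyset$.

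Next, for $(2)\implies(3)$, given a $G$-boundary $Y\subseteq S(C^*_r G)$ I would use the fact that $Y$, being a $G$-boundary, comes with a barycenter/Poisson construction producing an equivariant ucp map into $C(\partial_F G)$ whose pullback has $w^*$-closed convex hull equal to $X=\overline{\conv}^{w*}(Y)$; here the universality and $G$-injectivity of the Furstenberg boundary guarantees the existence of the required map. For $(1)\implies(2)$, starting from a boundary envelope $X$, I would pick any $\mu\in X$, form the state $\mu$ on $C^*_r(G)$, and recover a $G$-boundary $Y$ as a minimal $G$-invariant $w^*$-closed subset inside $X$ (existence by Zorn), then verify $X=\overline{\conv}^{w*}(Y)$ using that $X=K(\mu)$ for every point and that the minimal set generates the whole envelope under convex closure. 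The main obstacle, and the step I would spend the most care on, is $(1)\implies(2)$: extracting a genuine $G$-boundary (minimal and strongly proximal) from the abstract envelope condition $X=K(\mu)$, and checking that strong proximality --- not merely minimality --- holds, which is exactly where the boundary-envelope axiom "$K(\mu)=X$ for \emph{every} $\mu\in X$" must be leveraged, since that uniformity is the state-space avatar of strong proximality.
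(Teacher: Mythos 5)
Your directions $(3)\Rightarrow(1)$ and $(2)\Rightarrow(3)$ match the paper's argument: the first is exactly the observation that $\overline{\Prob(G)*\mu}^{w*}=S(C(\partial_F G))$ for every state $\mu$ on a $G$-boundary, pulled back along $\Psi^*$, and the second is the converse of Kennedy's Proposition~3.1, which is what the paper cites and what your barycenter/Poisson sketch amounts to.

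The genuine gap is in $(1)\Rightarrow(2)$, precisely at the point you flag but do not close. From the envelope axiom ``$K(\mu)=X$ for every $\mu\in X$'' you only get that $X$ is an \emph{irreducible (minimal) affine} $G$-space: every nonempty $w^*$-closed convex invariant subset is all of $X$. Extracting by Zorn a minimal $w^*$-closed invariant subset $Y\subseteq X$ does give $\overline{\conv}^{w*}(Y)=X$ for free (any such convex hull is invariant, closed and nonempty, hence equals $X$), but minimality of $Y$ as a $G$-space does \emph{not} by itself yield strong proximality, and the ``uniformity'' of the envelope axiom is not, on its own, the state-space avatar of strong proximality --- it is the avatar of affine irreducibility. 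The missing ingredient is Glasner's structure theorem for irreducible affine dynamical systems (Theorem~III.2.3 in Glasner's book, which is what the paper invokes): for an irreducible affine $G$-space $X$, the set $\overline{\mathrm{ext}(X)}^{w*}$ is the unique minimal subset and it \emph{is} strongly proximal, hence a $G$-boundary. Without citing or reproving this (the proof uses that in an irreducible affine system every point is proximal to an extreme point, via pushing measures to point masses), your minimal set $Y$ is not known to be a $G$-boundary, and implication $(1)\Rightarrow(2)$ is not established.
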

  
\begin{proof}
  The proof comes from a careful inspection of the proof of \cite[Proposition~3.1]{K20}.

  $(1\implies 2)$. It follows from the definitions that $X$ is a minimal affine
  $G$-space. Let $\mathop{\mathrm{ext}}(X)$ be the set of extreme points of $X$. By
  \cite[Theorem III2.3]{Glasner}, $\overline{\mathop{\mathrm{ext}}(X)}^{w^*}$ is a
  $G$-boundary.

  $(2\implies 3)$. This is exactly the converse of \cite[Proposition 3.1]{K20}.

  $(3\implies 1)$. Because $C(\partial_F G)$ is a $G$-boundary, for every
  $\mu\in S(C(\partial_FG))$, $\overline{\Prob(G)*\mu}^{w^*} = S(C(\partial_F G))$. It is immediate from here
  that $X$ is a $G$-boundary envelope.
\end{proof}
  
\begin{lem}\label{MinimalGBoundary}
  For any $\mu\in S(\Cred\G)$, there exists a $\G$-boundary envelope $X\subseteq K(\mu)$, and 
  a $\G_h$-boundary envelope $X\subseteq K_h(\mu)$.
\end{lem}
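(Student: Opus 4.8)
The plan is to realize $X$ as a minimal element, for reverse inclusion, of a suitable family of invariant subsets of $K(\mu)$, obtained by Zorn's lemma. Write $P = \Probc(\G)$ for the $\G$-envelope, resp.\ $P = \Probhc(\G)$ for the $\G_h$-envelope; the two cases run in parallel, so I treat them uniformly. First I would record the basic properties of the convolution $f*\nu = (f\otimes\nu)\circ\ad$. Since $\ad$ is a $*$-homomorphism and $f\otimes\nu$ is a state, $f*\nu$ is again a state, so $P*\nu\subseteq S(\Cred\G)$ and each $K(\nu)$ is a $w*$-closed subset of the $w*$-compact state space, hence itself $w*$-compact. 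The coassociativity $(\id\otimes\ad)\ad = (\Delta\otimes\id)\ad$ gives $(f*g)*\nu = f*(g*\nu)$, and each map $\nu\mapsto f*\nu$ is $w*$-continuous, being the adjoint of $x\mapsto x*f = (f\otimes\id)\ad(x)$ on $\Cred\G$. Finally I would check that $P$ is a convolution subsemigroup: $\Probc(\G)$ is stable under $(f,g)\mapsto(f\otimes g)\Delta$ because the fusion rules decompose a product of finitely many blocks into finitely many irreducibles, and $\Probhc(\G)$ is stable in addition because $(f*g)*h = f*(g*h) = f*h = h$.

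From these facts $K(\cdot)$ is $P$-invariant: for $f\in P$ one has $f*(P*\nu) = \{(f*g)*\nu : g\in P\}\subseteq P*\nu$ by the semigroup property, so $w*$-continuity of $\nu'\mapsto f*\nu'$ yields $f*K(\nu)\subseteq\overline{f*(P*\nu)}^{w*}\subseteq K(\nu)$. The plan is then to let $\mathcal{C}$ be the family of all nonempty $w*$-closed subsets $C\subseteq K(\mu)$ with $P*C\subseteq C$. It is nonempty, since $K(\mu)\in\mathcal{C}$. For a chain in $\mathcal{C}$, ordered by inclusion, its intersection is $w*$-closed and $P$-invariant, and nonempty by the finite intersection property (each member being nonempty and $w*$-compact); hence Zorn's lemma produces a minimal element $X\in\mathcal{C}$.

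It remains to verify that this minimal $X$ is a boundary envelope, i.e.\ $X = K(\nu)$ for every $\nu\in X$. Fix $\nu\in X$. Since $X$ is $w*$-closed, $P$-invariant, and contains $\nu$, we have $P*\nu\subseteq X$ and therefore $K(\nu) = \overline{P*\nu}^{w*}\subseteq X$. On the other hand $K(\nu)$ is nonempty, $w*$-closed, $P$-invariant, and contained in $X\subseteq K(\mu)$, so $K(\nu)\in\mathcal{C}$; minimality of $X$ then forces $K(\nu) = X$. This produces the required boundary envelope inside $K(\mu)$, simultaneously for $P = \Probc(\G)$ and $P = \Probhc(\G)$.

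I expect the only points needing genuine care to be the semigroup stability of $\Probhc(\G)$ and the verifications that $f*\nu$ remains a state and that $\nu\mapsto f*\nu$ is $w*$-continuous; once these routine facts are established the Zorn's lemma scheme closes automatically, so I do not anticipate a real obstacle. One may additionally note that $\mu\in K(\mu)$, since the counit $\epsilon\in\Probhc(\G)$ satisfies $\epsilon*\nu = \nu$, although this is not needed for the argument.
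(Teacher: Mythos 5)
Your proof is correct and follows essentially the same route as the paper: both arguments apply Zorn's lemma to the family of nonempty $w*$-closed $P$-invariant subsets of $K(\mu)$, using $w*$-compactness and the finite intersection property to handle chains, and then observe that a minimal such subset is a boundary envelope. Your write-up merely makes explicit the routine verifications (semigroup property of $P$, $w*$-continuity of $\nu\mapsto f*\nu$) that the paper leaves implicit.
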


\begin{proof}
  Let $(X_j)_{j\in J}$ be a descending net of non-empty $w^*$-closed
  $\Prob(\G)$-invariant subspaces of $K(\mu)$. By $w^*$-compactness of
  $K(\mu)$, the finite intersection property implies that $\bigcap_{j\in J}X_j$
  is a non-empty $w^*$-closed $\Prob(\G)$-invariant subspace of $K(\mu)$. By
  Zorn's lemma, minimal such subspaces exist, and they are clearly
  $\G$-boundary envelopes. The argument is the same for $K_h(\mu)$.
\end{proof}

We can then prove a quantum version of \cite[Theorem 3.6]{K20}, where
$G$-boundaries are replaced with $\G_h$-boundary envelopes. In the separable case, we also obtain a stationary dynamical characterization of the \PAPh{} which generalizes \cite[Theorem 5.1]{HK23} to the quantum setting.

\begin{thm}\label{GBoundariesPAP}
  Consider the following properties of discrete quantum group $\G$:
  \begin{enumerate}
      \item $\G$ has the \PAPh{};
      \item the only $\G_h$-boundary in $S(C^*_r(\G))$ is trivial;
      \item there exists $f\in \overline{\Prob^c_h(\G)}$ such that $h$ is the unique $f$-stationary state on $C^*_r(\G)$.
  \end{enumerate}
  We have that $(1)\iff (2) \impliedby (3)$. If $C^*_r(\G)$ is separable then $(1)\implies (3)$.
\end{thm}

\begin{proof}
  $(1)\iff (2)$ Assume that $\G$ has the \PAPh{} and let $X$ be a $\G_h$-boundary
  envelope. Take $\mu\in X$, then by Lemma \ref{PowersAveLemma} we have
  $h\in K_h(\mu) = X$. But then we have as well $X = K_h(h) =
  \{h\}$. Conversely, fix $\mu\in S(\Cred\G)$. Using Lemma
  \ref{MinimalGBoundary} we can find a $\G_h$-boundary envelope
  $X\subseteq K_h(\mu)$, and by assumption we must have $X = \{h\}$. In
  particular $h\in K_h(\mu)$ and the \PAPh{} follows from Lemma
  \ref{PowersAveLemma}.

  $(3)\implies (1)$ Let $f\in \Prob^c_h(\G)$ be such that $h$ is the unique $f$-stationary state on $C^*_r(\G)$. By the same proof as \cite[Proposition 4.7]{HK23} (with left actions swapped with right ones) we have that $||\frac{1}{n}\sum^n_{k=1}a*f^{*k} - h(a)1|| \to 0$ as $n\to\infty$ for every $a\in C^*_r(\G)$. Hence $\G$ has the \PAPh{}.

    Now assume $C^*_r(\G)$ is separable.
    
  $(1)\implies (3)$ Consider the proof of \cite[Theorem 5.1]{HK23}. By the same reasoning, we obtain a state $f\in \Prob(\G)$ such that $h$ is uniquely $f$-stationary. In fact, $f\in \overline{\Prob^c_h(\G)}$ as, by its construction, it belongs to the closed convex envelope of $\Prob^c_h(\G)$.
\end{proof}

\subsection{The PAP and ucp maps on $\Cred\G$}

As in the classical case we establish the following correspondence between $\G$-boundary envelopes and $\G$-equivariant ucp maps $\Cred\G \to
C(\partial_F\G)$. Note that by Lemma~\ref{MinimalGBoundary} every $\G$-boundary envelope contains a $\G_h$-boundary envelope, so that $\G_h$-boundary envelopes
are also connected to $\G$-equivariant ucp maps $\Cred\G \to C(\partial_F\G)$.

\begin{defn}
  For a $\G$-equivariant ucp map $\Psi : \Cred\G \to B$ we denote $X_\Psi = \Psi^*(S(B)) \subseteq S(\Cred\G)$.
\end{defn}

\begin{lem}\label{GBoundariesLem}
  Let $\G$ be a DQG. The map $\Psi \mapsto X_\Psi$ is a bijection between $\G$-equivariant ucp maps $\Cred\G\to C(\partial_F\G)$ and $\G$-boundary envelopes in $S(\Cred\G)$.
\end{lem}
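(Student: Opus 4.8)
The plan is to construct explicit inverse maps in both directions and verify they are mutually inverse. Going from a $\G$-equivariant ucp map $\Psi : \Cred\G \to C(\partial_F\G)$ to the set $X_\Psi = \Psi^*(S(C(\partial_F\G)))$, I first need to show $X_\Psi$ is genuinely a $\G$-boundary envelope. The equivariance of $\Psi$ translates into the relation $\Psi^* \circ (f * {-}) = (f * {-}) \circ \Psi^*$ on states (where on the $C(\partial_F\G)$ side the convolution uses the boundary action), so that $\Psi^*$ intertwines the two convolution actions. Since $C(\partial_F\G)$ is the Furstenberg boundary, for any state $\nu\in S(C(\partial_F\G))$ the orbit closure $\overline{\Probc(\G)*\nu}^{w*}$ equals all of $S(C(\partial_F\G))$ — this is the boundary property phrased for states, analogous to the $(3\implies 1)$ step of Proposition~\ref{GState Boundaries Classical Case}. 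Pushing forward through $\Psi^*$ and using continuity plus the intertwining relation should give $\overline{\Probc(\G)*\Psi^*(\nu)}^{w*} = \Psi^*(S(C(\partial_F\G))) = X_\Psi$ for every $\Psi^*(\nu)\in X_\Psi$, which is exactly the defining property of a $\G$-boundary envelope.

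For the reverse direction I would start with a $\G$-boundary envelope $X$ and produce a ucp map. The natural candidate is to realize $X$ as $X_\Psi$ for a suitable $\Psi$, and the mechanism should be $\G$-injectivity of $C(\partial_F\G)$: pick any $\mu_0\in X$, view it as a $\G$-equivariant ucp map $\C \to \Cred\G^*$-ish datum, and extend. More concretely, since $X$ is a $w*$-closed convex $\Probc(\G)$-invariant minimal orbit, it behaves like the state space of some $\G$-boundary; one constructs the associated operator-system or $C^*$-data and invokes $\G$-injectivity (as in the proof of Proposition~\ref{SimplicityCondition}) to obtain an equivariant ucp map $\Cred\G \to C(\partial_F\G)$ whose image of states recovers $X$. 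I expect the cleanest route is to mimic the classical chain of equivalences in Proposition~\ref{GState Boundaries Classical Case}, extracting from $X$ an honest $\G$-boundary (as a $C^*$-algebra) and then using the universal/injective property of $C(\partial_F\G)$.

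The two main things to verify for the bijection are injectivity and surjectivity of $\Psi\mapsto X_\Psi$. Injectivity should follow because a $\G$-equivariant ucp map into the Furstenberg boundary is determined by its values on states: if $X_\Psi = X_{\Psi'}$ then $\Psi^*$ and $(\Psi')^*$ have the same range, and minimality together with the rigidity of maps into a $\G$-boundary (every equivariant ucp map into a boundary is completely isometric, hence the map is pinned down by the induced state map) forces $\Psi = \Psi'$. Surjectivity is exactly the content of the reverse construction above.

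The hard part will be the reverse direction, namely manufacturing a \emph{bona fide} $\G$-equivariant ucp map $\Cred\G\to C(\partial_F\G)$ out of the abstract boundary envelope $X$, since unlike the classical case I cannot directly identify $X$ with the state space of a commutative $\G$-boundary sitting inside $S(\Cred\G)$. The technical crux is to replace Glasner's theorem on extreme points (used in $(1\implies 2)$ classically) with the noncommutative injective-envelope machinery: I would extract the relevant equivariant data from $X$ and apply $\G$-injectivity of $C(\partial_F\G)$ to extend it, then check that the resulting $\Psi$ reproduces precisely $X = X_\Psi$ rather than a larger or smaller envelope. Ensuring that this recovered envelope coincides with the original $X$ — i.e.\ that no collapse or enlargement occurs under the extension — is where the argument will require the most care.
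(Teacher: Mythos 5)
Your forward direction matches the paper's: equivariance of $\Psi$ plus the fact that $K(\nu)=S(C(\partial_F\G))$ for every state $\nu$ on a $\G$-boundary immediately gives that $X_\Psi$ is a boundary envelope. The genuine gap is in the other two thirds of the argument. For surjectivity you propose to ``extract from $X$ an honest $\G$-boundary (as a $C^*$-algebra)'' and then invoke $\G$-injectivity, but you supply no mechanism for producing such a $C^*$-algebra from the abstract envelope $X$ --- and as you yourself note, the classical route through Glasner's extreme-point theorem is unavailable. The paper never builds a boundary out of $X$ at all. Instead it picks a single $\mu\in X$, forms the Poisson transform $\fP_\mu=(\id\otimes\mu)\ad:\Cred\G\to\ell^\infty(\G)$, composes with a $\G$-equivariant ucp projection $P:\ell^\infty(\G)\to C(\partial_F\G)$, and sets $\Psi=P\circ\fP_\mu$. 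The inclusion $X_\Psi\subseteq X$ then comes from the identity $f\circ\fP_\mu=f*\mu$ together with the fact that every $\varphi\circ P$ with $\varphi\in S(C(\partial_F\G))$ is a weak-$*$ limit of normal states, so $X_\Psi\subseteq \overline{\Probc(\G)*\mu}^{w*}=K(\mu)=X$; the reverse inclusion follows from minimality of $X$ and $\Prob(\G)$-invariance of $X_\Psi$. This Poisson-transform construction is the missing idea, and without it your surjectivity argument does not get off the ground.

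Your injectivity argument is also incomplete: knowing that $\Psi^*$ and $(\Psi')^*$ have the same \emph{range} does not determine the maps --- two distinct equivariant ucp maps could a priori push the state space of $C(\partial_F\G)$ onto the same subset of $S(\Cred\G)$, and ``complete isometry of maps into a boundary'' says nothing here since the maps go \emph{into} the boundary, not out of it. The paper's injectivity proof is tied to the explicit construction above: if $X=X_\Phi$, then the chosen $\mu$ has the form $\varphi\circ\Phi$, so by equivariance $\Psi=P\circ\fP_\varphi\circ\Phi$, and $\G$-rigidity of the Furstenberg boundary forces the equivariant ucp self-map $P\circ\fP_\varphi$ of $C(\partial_F\G)$ to be the identity, whence $\Psi=\Phi$. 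You would need to supply this (or an equivalent) rigidity argument to close the bijection.
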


\begin{proof}
  Let $B$ be a $\G$-boundary, and $\Psi : \Cred\G \to B$ a $\G$-equivariant ucp map. Then for every $\mu\in S(B)$ we have $K(\mu) = S(B)$, see
  \cite[Lemma~4.2]{KKSV22}, and since $\Psi$ is equivariant this entails $K(\nu) = X_\Psi$ for all $\nu\in X_\Psi$, i.e.\ $X_\Psi$ is a $\G$-boundary
  envelope. Note that $\Psi^*$ is $(w^*,w^*)$-continuous and $X_\Psi$ is $w^*$-closed, as it is the continuous image of the $w^*$-compact space $S(B)$.

  Conversely, let $X\subset S(\Cred\G)$ be a $\G$-boundary envelope and take $\mu\in X$. Recall that we denote
  $\fP_\mu = (\id\otimes\mu)\ad : \Cred\G \to \ell^\infty(\G)$, so that $f\circ \fP_\mu = f*\mu$ for all $f\in\Prob(\G)$.  Choose a $\G$-equivariant ucp
  projection $P : \ell^\infty(\G) \to C(\partial_F\G)$ and set $\Psi = P\circ \fP_\mu : \Cred\G\to C(\partial_F\G)$. Since
  $\varphi\circ P\in \overline{\Prob(\G)}^{w^*}$ for any $\varphi \in S(C(\partial_F\G))$ we have
  \begin{displaymath}
    X_\Psi = \{\varphi\circ \Psi : \varphi\in S(C(\partial_F\G))\}\subseteq
    \overline{\{f\circ \fP_\mu : f\in \Prob(\G)\}}^{w^*} = K(\mu) = X.
  \end{displaymath}
  Moreover, taking $\nu\in X_\Psi$, we have $X = K(\nu)$ because $X$ is a $\G$-boundary envelope, hence $X\subset X_\Psi$ because $X_\Psi$ is
  $\G$-invariant.

  On the other hand, if we started from $X = X_\Phi$ in this construction, we have $\mu = \varphi\circ\Phi$ with $\varphi\in S(C(\partial_F\G))$, hence
  $\Psi = P\circ \fP_\varphi\circ\Phi$ by equivariance of $\Phi$. But by rigidity of $\partial_F\G$, the equivariant ucp map
  $P\circ \fP_\varphi : C(\partial_F\G) \to C(\partial_F\G)$ must be the identity, hence $\Psi = \Phi$.
\end{proof}

\begin{rem}
  There is a clear analogue of a $\G$-boundary envelope in the state space of an arbitrary $\G$-$C^*$-algebra. In Lemma \ref{GBoundariesLem} and its proof, we can replace $\Cred\G$ with any $\G$-$C^*$-algebra --- to be precise, there is a bijection between $\G$-boundary envelopes in
  $S(A)$ and $\G$-equivariant ucp maps $A\to C(\partial_F\G)$ for any $\G$-$C^*$-algebra $A$.
\end{rem}

\begin{defn}\label{Factor Haar State Def}
  Given a $C^*$-algebra $A$, we say that a ucp map $\Psi : \Cred\G\to A$ factors $h$ if $h\in\Psi^*(S(A))$.
\end{defn}

\begin{lem}\label{Weak* Approximately Equal Lemma}
  Let $\G$ be a DQG and $A$ a $C^*$-algebra. A ucp map $\Psi : \Cred\G\to A$ factors $h$ {\bf iff} for all $x\in\C[\G]$ we have $h(x)\in \overline{\{\mu(\Psi(x)) : \mu\in S(A) \}}$.
\end{lem}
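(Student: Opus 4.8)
The forward implication is immediate and I would dispose of it first: if $\Psi$ factors $h$ then $h = \mu_0\circ\Psi$ for some $\mu_0\in S(A)$ by Definition~\ref{Factor Haar State Def}, so $h(x) = \mu_0(\Psi(x))\in\overline{\{\mu(\Psi(x)):\mu\in S(A)\}}$ for every $x\in\C[\G]$. The substance of the lemma lies in the converse, and the plan is to recast ``$\Psi$ factors $h$'' as the membership $h\in\Psi^*(S(A))$ and then to rule out its negation by a Hahn--Banach separation argument, in the spirit of the proof of Lemma~\ref{PowersAveLemma}.

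First I would record that $\Psi^*(S(A))$ is a $w^*$-compact convex subset of $S(\Cred\G)$: it is convex since $\Psi^*$ is affine, and $w^*$-compact since $\Psi^*$, being the adjoint of a bounded map, is $(w^*,w^*)$-continuous while $S(A)$ is $w^*$-compact. In particular $\Psi^*(S(A))$ is $w^*$-closed, so if $h\notin\Psi^*(S(A))$ I may strictly separate the point $h$ from this set. Applying the separation theorem in the locally convex space $(\Cred\G^*, w^*)$, whose continuous real-linear functionals are exactly the maps $\nu\mapsto\Re\nu(x)$ with $x\in\Cred\G$, produces an $x\in\Cred\G$ with
\[
  \Re h(x) > \sup_{\mu\in S(A)}\Re\mu(\Psi(x)),
\]
since the supremum over the compact set $\Psi^*(S(A))$ equals $\sup_{\mu}\Re\mu(\Psi(x))$.

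The hard part, and the only place where the restriction to $\C[\G]$ in the statement is genuinely used, is to transport this strict inequality to the dense subalgebra $\C[\G]$. I would pick $x'\in\C[\G]$ with $\|x-x'\|$ small and exploit that $h$, $\Psi$, and every $\mu\in S(A)$ are contractive, so that both $x\mapsto\Re h(x)$ and $x\mapsto\sup_{\mu}\Re\mu(\Psi(x))$ vary by at most $\|x-x'\|$; for $\|x-x'\|$ small enough the strict inequality persists with $x$ replaced by $x'$. The hypothesis then gives $h(x')\in\overline{\{\mu(\Psi(x')):\mu\in S(A)\}}$, whence $\Re h(x')\leq\sup_{\mu}\Re\mu(\Psi(x'))$, contradicting the separation. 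Hence $h\in\Psi^*(S(A))$, i.e.\ $\Psi$ factors $h$.

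I expect no conceptual obstacle beyond this bookkeeping; the argument is a standard point-versus-compact-convex-set separation, with density of $\C[\G]$ in $\Cred\G$ supplying the passage from the abstract separating element of $\Cred\G$ to one in the Hopf $*$-algebra. I would remark in passing that the closure in the statement is in fact superfluous, since $w^*$-compactness of $S(A)$ makes $\{\mu(\Psi(x')):\mu\in S(A)\}$ a closed subset of $\C$, but the proof above does not rely on this.
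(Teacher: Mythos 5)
Your proof is correct and follows essentially the same route as the paper's: Hahn--Banach separation of $h$ from the $w^*$-compact convex set $\Psi^*(S(A))$, followed by a density argument to replace the separating element of $\Cred\G$ by one in $\C[\G]$ (the paper even makes the same closing remark that the closure is superfluous). The only difference is cosmetic bookkeeping in how the positive separation gap is carried through the approximation.
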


\begin{proof}
  Since $S(A)$ is $w^*$-compact and convex, $\Psi^*(S(A))$ is a $w^*$-closed convex subset of $S(\Cred\G)$. If $h\notin\Psi^*(S(A))$, the Hahn-Banach separation
  theorem shows that there exists $x\in \Cred\G$ and $\epsilon > 0$ such that $\Re(\mu(\Psi(x))) \geq \Re(h(x)) + 3\epsilon$ for all $\mu\in S(A)$. Taking $y\in\C[\G]$ such that $\|x-y\|\leq\epsilon$ we have $|\mu(\Psi(y)) - h(y)|\geq \epsilon$ for all $\mu\in S(A)$. The reverse direction is obvious. Note that $\{\mu(\Psi(x)) : \mu\in S(A) \}$ is in fact closed.
\end{proof}

\begin{lem}\label{Gequivariance and Weak* Approximation}
  Let $A$ be a $\G$-boundary and $\Psi : \Cred\G\to A$ a $\G$-equivariant ucp map.
  \begin{enumerate}
  \item Assume $\G$ has the \PAP. Then $\Psi$ factors $h$.
  \item Assume $\G$ is unimodular.  Then $\Psi$ factors $h$ {\bf iff} $\Psi = 1h$.  
  \end{enumerate}
\end{lem}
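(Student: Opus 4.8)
The plan is to handle the two parts by different mechanisms: part (1) through the boundary-envelope correspondence together with the \PAP, and part (2) through the Poisson transform of the Haar state combined with boundary rigidity.

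For part (1), I first observe that the opening paragraph of the proof of Lemma~\ref{GBoundariesLem} uses only that the codomain is a $\G$-boundary; applying that argument to $\Psi:\Cred\G\to A$ shows that $X_\Psi=\Psi^*(S(A))$ is a $\G$-boundary envelope. I then pick any $\mu\in X_\Psi$, so that $K(\mu)=X_\Psi$. By the \PAP and the equivalence $(1)\Leftrightarrow(2)$ of Lemma~\ref{PowersAveLemma} (in the $K(\mu)$ version), we have $h\in K(\mu)=X_\Psi=\Psi^*(S(A))$; that is, $h=\varphi\circ\Psi$ for some $\varphi\in S(A)$, which is exactly the statement that $\Psi$ factors $h$ (Definition~\ref{Factor Haar State Def}). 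This route sidesteps any self-adjointness bookkeeping, since Lemma~\ref{PowersAveLemma} already incorporates it.

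For part (2) the backward implication is immediate, since if $\Psi=1h$ then every $\varphi\in S(A)$ satisfies $\varphi\circ\Psi=h$. For the forward implication, I suppose $\mu\in S(A)$ satisfies $\mu\circ\Psi=h$ and form the Poisson transform $\fP_\mu=(\id\otimes\mu)\alpha:A\to\ell^\infty(\G)$, where $\alpha$ is the action on $A$. Using $\G$-equivariance of $\Psi$ one computes $\fP_\mu\circ\Psi=(\id\otimes\mu\Psi)\ad=(\id\otimes h)\ad$, the Poisson transform of $h$ under the adjoint action. The decisive observation is that in the unimodular case $h$ is $\G$-invariant: since $\Probhc(\G)=\Probc(\G)$ there (cf.\ the discussion after Lemma~\ref{KMS States and h-Invariant States}), $\Probc(\G)$ is norm-dense in $\Prob(\G)$, and $f\mapsto f*h$ is norm-continuous, we get $f*h=h$ for all $f\in\Prob(\G)$. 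Invariance forces $(\id\otimes h)\ad(x)=h(x)1$, because for every normal state $f$ we have $f((\id\otimes h)\ad(x))=(f*h)(x)=h(x)$ and normal states separate $\ell^\infty(\G)$. Hence $\fP_\mu\circ\Psi=\fP_\mu\circ(1h)$, using that $\fP_\mu$ is unital; since $A$ is a $\G$-boundary, $\fP_\mu$ is completely isometric, in particular injective, and we conclude $\Psi=1h$.

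The convolution identities and the verification that $f*\mu$ stays a state are routine. The substantive step, and the only point where the unimodularity and boundary hypotheses are genuinely used, is the forward direction of (2): $\G$-invariance of $h$ collapses the Poisson transform of $h$ to the scalar map $1h$, and injectivity of $\fP_\mu$ on the boundary $A$ then transports this rigidity back to $\Psi$.
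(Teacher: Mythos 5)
Your proof is correct. Part (1) is exactly the paper's argument: $X_\Psi$ is a $\G$-boundary envelope by the first paragraph of the proof of Lemma~\ref{GBoundariesLem}, and the \PAP{} via Lemma~\ref{PowersAveLemma} puts $h$ in $K(\mu)=X_\Psi$. For the forward direction of part (2) you take a genuinely different route. The paper stays inside the envelope formalism: since $X_\Psi$ is a boundary envelope containing $h$, one has $X_\Psi=K(h)$, and unimodularity gives $K(h)=\{h\}$, so \emph{every} state on $A$ composed with $\Psi$ equals $h$ and states separate points of $A$. You instead start from a \emph{single} $\mu\in S(A)$ with $\mu\circ\Psi=h$, use equivariance to compute $\fP_\mu\circ\Psi=(\id\otimes h)\ad$, collapse this to $1h$ via $\G$-invariance of $h$ in the unimodular case (your density argument $\Probhc(\G)=\Probc(\G)$ dense in $\Prob(\G)$ and norm continuity of $f\mapsto f*h$ is sound, and normal states do separate $\ell^\infty(\G)$), and then invoke injectivity of the completely isometric Poisson map to pull the identity $\fP_\mu\circ\Psi=\fP_\mu\circ(1h)$ back to $\Psi=1h$. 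Both arguments rest on the same key fact --- $\G$-invariance of $h$ when $\G$ is unimodular --- but yours uses the defining property of a $\G$-boundary (complete isometry of $\fP_\mu$) directly rather than through the envelope machinery, and it shows the slightly sharper statement that a single state factoring $h$ already forces $\Psi=1h$; the paper's version is shorter once the envelope lemmas are in place.
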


\begin{proof}
First we note that by (the beginning of the proof of) Lemma~\ref{GBoundariesLem} the set $X_\Psi = \Psi^*(S(A))$ is a $\G$-boundary envelope.

  1. By Lemma~\ref{PowersAveLemma}, $h\in K(\mu) = X_\Psi$, where we have chosen some $\mu\in X_\Psi$.
  
  2. If $\Psi$ factors $h$, $X_\Psi$ contains $h$, hence $X_\Psi = K(h)$. Since $\G$ is unimodular, $K(h) = \{h\}$. As a result we have $\mu\circ\Psi = h = \mu\circ (1h)$ for all $\mu\in S(A)$, hence $\Psi = 1h$. The converse is trivial.
\end{proof}

\begin{cor}\label{PAPUniqueGMap}
    A DQG $\G$ has the \PAP{} {\bf iff} every $\G$-equivariant ucp map $\Cred\G\to C(\partial_F\G)$ factors $h$. In particular, if $\G$ is unimodular then $\G$ has the \PAP{} {\bf iff} $1h$ is the unique $\G$-equivariant ucp map $\Cred\G\to C(\partial_F\G)$.
\end{cor}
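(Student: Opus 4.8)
The plan is to deduce Corollary~\ref{PAPUniqueGMap} by combining the correspondence of Lemma~\ref{GBoundariesLem} with the factorization results of Lemma~\ref{Gequivariance and Weak* Approximation}. For the first equivalence, I would argue in two directions. For the forward direction, assume $\G$ has the \PAP{} and let $\Psi : \Cred\G\to C(\partial_F\G)$ be any $\G$-equivariant ucp map; then part~1 of Lemma~\ref{Gequivariance and Weak* Approximation} applies directly (since $C(\partial_F\G)$ is a $\G$-boundary) and yields that $\Psi$ factors $h$. For the converse, suppose every $\G$-equivariant ucp map $\Cred\G\to C(\partial_F\G)$ factors $h$, and I would verify the characterization of the \PAP{} given by Lemma~\ref{PowersAveLemma}, namely that $h\in K(\mu)$ for every $\mu\in S(\Cred\G)$.

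The key step in the converse is to manufacture, from an arbitrary state $\mu\in S(\Cred\G)$, a $\G$-equivariant ucp map into $C(\partial_F\G)$ whose image contains $\mu$, so that the factorization hypothesis forces $h$ into the corresponding boundary envelope $K(\mu)$. This is precisely what the proof of Lemma~\ref{GBoundariesLem} supplies: given $\mu$, one picks a minimal $\G$-boundary envelope $X\subseteq K(\mu)$ by Lemma~\ref{MinimalGBoundary}, picks $\nu\in X$, and sets $\Psi = P\circ\fP_\nu$ for a $\G$-equivariant ucp projection $P:\ell^\infty(\G)\to C(\partial_F\G)$; then $X_\Psi = X \subseteq K(\mu)$. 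By hypothesis $\Psi$ factors $h$, so $h\in X_\Psi\subseteq K(\mu)$, and Lemma~\ref{PowersAveLemma} (the equivalence of its items 1 and 2 for the \PAP) gives the \PAP.

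For the second, unimodular, statement I would again argue both directions but now invoking part~2 of Lemma~\ref{Gequivariance and Weak* Approximation}, which states that under unimodularity an equivariant ucp map factors $h$ \textbf{iff} it equals $1h$. Thus in the unimodular case ``every equivariant ucp map factors $h$'' is literally the same as ``every equivariant ucp map equals $1h$'', i.e.\ $1h$ is the unique such map; combining this with the first equivalence immediately gives that $\G$ has the \PAP{} \textbf{iff} $1h$ is the unique $\G$-equivariant ucp map $\Cred\G\to C(\partial_F\G)$. One should only note that $1h$ is genuinely $\G$-equivariant here: this is where unimodularity is essential, as remarked in the introduction (the canonical map $a\mapsto h(a)1$ is $\G$-equivariant precisely when $\G$ is unimodular), and it guarantees that the uniqueness statement is about a nonempty set of maps.

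The main obstacle is essentially bookkeeping rather than a genuine difficulty: the real content has already been isolated in the earlier lemmas, so the proof is a short assembly. The one point requiring care is the converse of the first equivalence, where one must confirm that the construction in Lemma~\ref{GBoundariesLem} indeed produces a map $\Psi$ with $X_\Psi\subseteq K(\mu)$ (and not merely some unrelated boundary envelope), so that the factorization hypothesis can be applied to a map whose envelope sits inside $K(\mu)$; this is exactly guaranteed by the inclusion $X_\Psi=X\subseteq K(\mu)$ established in that proof.
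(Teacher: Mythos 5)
Your proposal is correct and follows essentially the same route as the paper: part~1 of Lemma~\ref{Gequivariance and Weak* Approximation} for the forward direction, and for the converse the combination of Lemma~\ref{MinimalGBoundary}, the surjectivity of $\Psi\mapsto X_\Psi$ from Lemma~\ref{GBoundariesLem}, and the equivalence $1\Leftrightarrow 2$ of Lemma~\ref{PowersAveLemma}, with the unimodular case handled by part~2 of Lemma~\ref{Gequivariance and Weak* Approximation}. The extra care you take in checking that $X_\Psi=X\subseteq K(\mu)$ is exactly the content the paper relies on implicitly.
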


\begin{proof}
    The direct implication is a particular case of Lemma \ref{Gequivariance and Weak* Approximation}. For the converse, take $\mu\in S(\Cred\G)$. By Lemma~\ref{MinimalGBoundary} there exists a $\G$-boundary $X\subset K(\mu)$, which can be written $X = X_\Psi$ for some $\G$-equivariant ucp map $\Psi : \Cred\G \to C(\partial_F\G)$ by Lemma~\ref{GBoundariesLem}. By hypothesis $\Psi$ factors $h$, hence we have $h\in X \subset K(\mu)$, and Lemma \ref{PowersAveLemma} shows that $\G$ has the \PAP. The unimodular case follows immediately from Lemma \ref{Gequivariance and Weak* Approximation} and the general case.
\end{proof}

We end this section by stating explicitly the new results that we obtain for the unitary free quantum groups.

\begin{cor}
  The only $(\F U_F)_h$-boundary envelope in $S(\Cred{\F U_F})$ is trivial. Every $\F U_F$-equivariant ucp map
  $\Cred{\F U_F}\to C(\partial_F \F U_F)$ factors $h$, and if $F$ is unitary $1h$ is the unique such map.
\end{cor}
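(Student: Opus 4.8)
The plan is to read off all three assertions directly from the machinery already assembled in Section~\ref{sec_def_PAP} and the present subsection, since this corollary is nothing but the instantiation of the general theory at $\G = \F U_F$. The single substantive input is Proposition~\ref{Banicas Result}, which records that $\F U_F$ has the \PAPh{}; every statement then follows by feeding this fact into the characterizations proved earlier. So the first thing I would do is simply fix the starting point: $\F U_F$ has the \PAPh{}, and hence also the \PAP{} by the implication noted in Definition~\ref{PAP Def}.

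For the first assertion I would invoke Theorem~\ref{GBoundariesPAP}: since $\F U_F$ has the \PAPh{}, the theorem yields immediately that the only $(\F U_F)_h$-boundary envelope in $S(\Cred{\F U_F})$ is trivial. For the second assertion I would apply the first part of Corollary~\ref{PAPUniqueGMap}, which says that under the \PAP{} every $\G$-equivariant ucp map $\Cred\G \to C(\partial_F\G)$ factors $h$; specializing to $\G = \F U_F$ (which has the \PAP{} by the previous paragraph) gives exactly the claim that every $\F U_F$-equivariant ucp map $\Cred{\F U_F}\to C(\partial_F\F U_F)$ factors $h$.

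For the last assertion I would recall from Section~\ref{Section Boundary FUF} that $\F U_F$ is unimodular whenever $F$ is a scalar multiple of a unitary matrix, in particular when $F$ is unitary. This places us in the unimodular regime, so the second part of Corollary~\ref{PAPUniqueGMap} becomes available; combined with the \PAP{} it yields that $1h$ is the unique $\F U_F$-equivariant ucp map $\Cred{\F U_F}\to C(\partial_F\F U_F)$.

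There is essentially no obstacle to overcome here: the corollary is a direct specialization of the general results and requires no argument beyond quoting Banica's theorem in the form of Proposition~\ref{Banicas Result} and then tracking the logical chain \PAPh{} $\Rightarrow$ \PAP{}. The only point demanding care is matching hypotheses correctly, namely recognizing that ``$F$ unitary'' forces unimodularity so that the second half of Corollary~\ref{PAPUniqueGMap} genuinely applies and delivers uniqueness of the trivial map $1h$.
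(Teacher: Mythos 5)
Your proposal is correct and is exactly the argument the paper intends (the corollary is stated without proof as a direct specialization): Proposition~\ref{Banicas Result} gives the \PAPh{} for $\F U_F$, Theorem~\ref{GBoundariesPAP} then gives triviality of $(\F U_F)_h$-boundary envelopes, and Corollary~\ref{PAPUniqueGMap} together with \PAPh{}$\Rightarrow$\PAP{} and the fact that unitarity of $F$ forces unimodularity gives the remaining two assertions.
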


\section{Freeness and Faithfulness of boundary actions}

\subsection{Freeness}

The following definition was formulated by Masuda and Tomatsu \cite[Definition 2.7]{MT07} in the case when $A$ is a von Neumann algebra equipped with a cocycle
action of $\G$. Recall that $W^*$-coactions on a von Neumann algebra $A$ are also $C^*$-coactions on $A$, see the discussion in \cite[Section
2.5]{KKSV22}. The same notion was also considered in \cite{Z19} in the case when $\G = G$ is a classical discrete group. See also \cite{Kallman_Free,CKN_Dependent} in the case of a single automorphism. One should be aware that in the case of classical actions of groups on locally compact spaces one recovers the notion of {\em topological} freeness ; we shall nevertheless keep the terminology of \cite{MT07} and \cite{Z19}.

In this section we denote $p_0\in c_c(\G)$ the support projection of the counit, which coincides with the minimal central projection corresponding to the trivial corepresentation $1\in I$. Recall that for a coaction $\alpha : A \to M(c_0(\G)\otimes A)$, $a\in A$ and $u\in I$ we denote $\alpha_u(a) = (p_u\otimes 1)\alpha(a) \in B(H_u)\otimes A$.

\begin{defn}\label{Topological Freeness}
  We say that $\G\acts A$ is {\bf free} if for any $X\in M(c_0(\G)\otimes A)$, $X(1\otimes a) = \alpha(a)X$ for all $a\in A$ implies
  $X \in p_0\otimes Z(A)$. Equivalently, for every $1\neq u\in I$ and every $X\in B(H_u)\otimes A$,
    $[~\forall a\in A \quad X(1\otimes a) = \alpha_u(a)X~] \implies X = 0$.
\end{defn}

\begin{prop}
  Let $\G = G$ be a discrete group acting on a $G$-$C^*$-algebra $A$. We have that $G\acts A$ is free (in our sense) if and only if $G$ acts freely on $A$ (in the
  sense of \cite{Z19}), i.e.\ $\alpha_g(a)b = ba$ for all $a\in A$ and some $g\in G\setminus \{e\}$ implies $b = 0$. In particular, if $A = C(X)$ is commutative
  then $G \acts C(X)$ is free if and only if $G\acts X$ is topologically free.
\end{prop}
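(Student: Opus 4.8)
The plan is to unwind both notions of freeness in the case $\G = G$ and check that they coincide term by term. Recall that for a classical discrete group the index set $I = \Irr(\G)$ is $G$ itself, every $H_u$ is one–dimensional, the trivial corepresentation $1\in I$ is the neutral element $e$, and $p_0 = p_e$. Since $A$ is unital we have $M(c_0(G)\otimes A)\cong\ell^\infty(G,A)$, under which the coaction is $\alpha(a) = (\alpha_g(a))_{g\in G}$ and $\alpha_u(a) = (p_u\otimes 1)\alpha(a)$ is, for $u=g$, exactly $\alpha_g(a)\in B(H_g)\otimes A\cong A$.

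First I would invoke the reformulation already recorded in Definition~\ref{Topological Freeness}: freeness means that for every $1\neq u\in I$ and every $X\in B(H_u)\otimes A$, the identity $X(1\otimes a) = \alpha_u(a)X$ for all $a$ forces $X=0$. Specializing to $u=g\neq e$, where $B(H_g)\otimes A\cong A$, the element $X$ is just some $b\in A$ and the condition reads $ba = \alpha_g(a)b$ for all $a\in A$. As $ba = \alpha_g(a)b$ is literally the equation $\alpha_g(a)b = ba$ of \cite{Z19}, this is verbatim the requirement that, for each $g\neq e$, the only $b$ with $\alpha_g(a)b = ba$ for all $a$ be $0$. That already yields the first equivalence. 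As a cross-check against the primary formulation I would write a general intertwiner $X = (X_g)_g\in\ell^\infty(G,A)$; the equation $X(1\otimes a)=\alpha(a)X$ becomes $X_g a = \alpha_g(a)X_g$ for all $g,a$. The $g=e$ component (where $\alpha_e=\id$) says precisely $X_e\in Z(A)$ with no further constraint, and a direct check shows every $X = p_0\otimes z$ with $z\in Z(A)$ is an intertwiner; hence demanding $X\in p_0\otimes Z(A)$ is the same as demanding $X_g=0$ for all $g\neq e$, confirming that the two formulations of Definition~\ref{Topological Freeness} agree here and match \cite{Z19}.

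For the ``in particular'' clause I would take $A=C(X)$ with $X$ compact Hausdorff, so that $Z(A)=A$ and $C(X)$ separates points. Writing $\alpha_g$ as $a\mapsto a\circ g^{-1}$ (the precise convention being immaterial, since $\Fix(g)=\Fix(g^{-1})$), for fixed $g\neq e$ the set $\{b\in C(X) : \alpha_g(a)b = ba \ \forall a\}$ consists of those $b$ with $b(x)\,(a(g^{-1}x)-a(x)) = 0$ for all $a$ and $x$; since $C(X)$ separates points, this forces $b$ to vanish off $\Fix(g)=\{x : gx=x\}$. As $\{b\neq 0\}$ is open, a nonzero such $b$ exists if and only if $\Fix(g)$ has nonempty interior (one direction by inspection, the converse by Urysohn's lemma producing a bump function supported in the interior). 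Thus freeness in the sense of \cite{Z19} --- no nonzero $b$ for any $g\neq e$ --- holds exactly when every $\Fix(g)$ with $g\neq e$ has empty interior, which is precisely topological freeness of $G\acts X$.

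The computations here are all routine; the content is definitional. The points requiring care are the bookkeeping of conventions --- the identification $M(c_0(G)\otimes A)\cong\ell^\infty(G,A)$, the role of $p_0=p_e$, and the observation that the $e$-component of an intertwiner is automatically central, so that freeness constrains only the nontrivial $g$ --- and, in the commutative case, the passage from the algebraic condition on $b$ to the topological statement about interiors of fixed-point sets, where point-separation and Urysohn's lemma do the work. I expect this last translation to be the only step with any genuine (if mild) subtlety.
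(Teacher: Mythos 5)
Your proposal is correct and follows essentially the same route as the paper: the first equivalence is the observation that $B(H_g)\simeq\C$ reduces the intertwiner condition to $ba=\alpha_g(a)b$ for each $g\neq e$, and the commutative case is handled by showing that a nonzero $b$ satisfying this relation exists precisely when $\Fix(g)$ has nonempty interior (point separation in one direction, a Urysohn bump function in the other). Your extra cross-check that the two formulations in Definition~\ref{Topological Freeness} agree (the $e$-component being automatically central) is a harmless elaboration of what the paper leaves implicit.
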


\begin{proof}
  Since $B(H_g) \simeq \C$ for every $g\in I = G$, the equation $X(1\otimes a) = \alpha(a)X$, for $X\in B(H_g)\otimes A$, is equivalent to $Xa = \alpha_g(a)X$ for every
  $g\in G$. This establishes the claim that $G\acts A$ is free if and only if $G$ acts freely on $A$.

  Now, assume $G\acts C(X)$ is free. Take $g\in G$ and an open subset $U\subseteq \Fix(g)$. Take any $b\in C(X)$ whose support is
  contained in $U$. Then for any $a\in C(X)$ we have $\alpha_g(a)b = ab$, hence $b=0$. This shows that $\Fix(g)$ has empty interior as desired. Conversely, suppose that $G\acts C(X)$ is not free. Then there exists $b\in C(X)$, $b\neq 0$ and $g\in G$, $g\neq e$ such that $b\alpha_g(a) = ab$ for every $a\in C(X)$. Let $U$ be a non empty open subset contained in the
  support of $b$. Then $U\subseteq \Fix(g)$.
\end{proof}

Compare the following with \cite[Lemma 2.8]{MT07}, \cite[Theorem 2.14]{MT07}, and \cite[Section 7.4]{KS19}. Our theorem is a quantum analogue of \cite[Theorem
3.2]{Z19} and the statement $2.\iff 3.$ of our theorem is a $C^*$-algebraic analogue of \cite[Theorem 2.14]{MT07}.

Recall that $A\rtimes_r\G$ is the closed subspace of $M(K(\ell^2(\G))\otimes A)$ generated by elements $(x\otimes 1)\alpha(a)$ where $x\in\Cred\G$ and $a\in A$. We
can and will identify $\Cred\G$ and $A$ with subalgebras of $M(A\rtimes_r\G)$ --- moreover in our setting $A\rtimes_r\G$ will be unital. We denote
$E_0 = \omega_{\xi_0}\otimes \id : A\rtimes_r\G \to A$ the canonical conditional expectation, where $\xi_0\in \ell^2(\G)$ is the canonical $\Cred\G$-cyclic vector.

\begin{thm}\label{UniqueConditionalExpectations}
  Let $\G$ be a DQG and $A$ be a $\G$-$C^*$-algebra. The following are equivalent:
  \begin{enumerate}
  \item there exists a unique conditional expectation $A\rtimes_r\G\to A$;
  \item $\alpha(A)'\cap (A\rtimes_r\G) = Z(\alpha(A))$;
  \item $\G\acts^\alpha A$ is free.
  \end{enumerate}
\end{thm}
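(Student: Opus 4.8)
The plan is to prove the three equivalences by establishing the cycle $(3)\implies(2)\implies(1)\implies(3)$, or perhaps more naturally by proving $(2)\iff(3)$ directly and then $(1)\iff(2)$. Let me sketch each piece.

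For $(3)\implies(2)$: the relative commutant $\alpha(A)'\cap(A\rtimes_r\G)$ always contains $Z(\alpha(A))$, so the content is the reverse inclusion. Take $X\in\alpha(A)'\cap(A\rtimes_r\G)$, so $X\alpha(a)=\alpha(a)X$ for all $a\in A$. I would use the dual coaction / the canonical $\hat\G$-grading of the crossed product to decompose $X$ into its spectral components $X_u$ indexed by $u\in I$, living in the pieces $\alpha(A)(B(H_u)\otimes 1)$ (the analogue of the Fourier decomposition $X=\sum_u X_u\lambda(u)$ in the group case). The commutation $X\alpha(a)=\alpha(a)X$ should translate, component by component, into exactly the intertwining relation $X_u(1\otimes a)=\alpha_u(a)X_u$ appearing in Definition~\ref{Topological Freeness}. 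Freeness then forces $X_u=0$ for $u\neq 1$, leaving $X=X_1\in\alpha(A)'\cap\alpha(A)=Z(\alpha(A))$.

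For $(2)\implies(1)$: any conditional expectation $E\colon A\rtimes_r\G\to A$ fixes $A$ and is an $A$-bimodule map, so its difference from $E_0$ is controlled by how it acts on the non-trivial graded components. The standard argument (as in \cite{Z19}) is that $E-E_0$ vanishes on each spectral component $X_u$ with $u\neq 1$ because $E(X_u)\in\alpha(A)$ must commute with $\alpha(A)$ by the bimodule property, hence lies in $Z(\alpha(A))$; combined with hypothesis $(2)$ forcing the relevant intertwiners to be central, one deduces $E=E_0$. This gives uniqueness. The direction $(1)\implies(3)$ is the contrapositive: if the action fails to be free, there is a nonzero $X_u\in B(H_u)\otimes A$ with $u\neq 1$ satisfying the intertwining relation; this element sits inside $A\rtimes_r\G$ and can be used to build a genuinely new conditional expectation distinct from $E_0$, e.g.\ by a perturbation $E_0+(\text{slice against }X_u)$, contradicting uniqueness.

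The main obstacle I anticipate is making the ``Fourier/spectral decomposition'' of elements of $A\rtimes_r\G$ precise and showing that the relative-commutant condition decouples cleanly across the grading. In the group case this is transparent because $B(H_g)\simeq\C$ and the $\lambda(g)$ are honest unitaries; in the quantum case the components live in $B(H_u)\otimes A$ and one must handle the modular/antipodal twisting carefully, track where the intertwiners $\alpha_u(a)$ enter, and verify that slicing against a fixed matrix element of $u$ produces a bona fide completely positive unital $A$-bimodule map rather than merely a bounded one. I would expect the delicate point to be checking complete positivity (or at least positivity and the conditional-expectation identity) of the perturbed map in the $(1)\implies(3)$ direction, and dually, in $(2)\implies(1)$, justifying that a conditional expectation is automatically determined by its values on the spectral subspaces --- essentially an averaging argument using the dual action's invariance.
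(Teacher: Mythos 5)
Your cycle runs in the opposite orientation from the one that actually works, and the step $(1)\Rightarrow(3)$ as you propose it has a genuine gap. You want to show that a non-free action yields a second conditional expectation by perturbing $E_0$ with a slice against a nonzero intertwiner $X_u$; but a perturbation of the form $E_0 + (\text{slice against }X_u)$ has no reason to be positive, let alone completely positive, and you offer no mechanism to fix this --- you correctly identify it as the delicate point, but it is not merely delicate, it is the whole difficulty, and it is precisely why the literature does not argue this way. The paper (following \cite[Proposition~3.1]{Z19}) instead proves $1\Rightarrow 2$ directly: for a self-adjoint $x$ in the relative commutant with $\|x\|<1$ one forms the manifestly ucp map $\theta(z) = E_0((1-x)^{1/2}z(1-x)^{1/2})(1-E_0(x))^{-1}$, which restricts to the identity on $A$ because $x$ commutes with $\alpha(A)$; uniqueness forces $\theta = E_0$, a multiplicative-domain computation shows $E_0$ is a $*$-character on the relative commutant, and faithfulness of $E_0$ finishes. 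This sidesteps entirely the need to manufacture a new positive map.

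Your other two steps also carry more weight than necessary. For $(3)\Rightarrow(2)$ you need a Fourier decomposition of an \emph{arbitrary} element of $A\rtimes_r\G$ and the fact that an element with vanishing nontrivial Fourier coefficients lies in $\alpha(A)$; this is doable via faithfulness of $E_0$ but is real work with modular subtleties. The paper proves the converse implication $2\Rightarrow 3$ instead, which needs no decomposition: from an intertwiner $X\in B(H_u)\otimes A$ one forms $X^\alpha = (\id\otimes\alpha)(X)$ and checks that $(u\otimes 1)X^\alpha$ commutes with $1\otimes\alpha(A)$, so hypothesis $(2)$ places it in $B(H_u)\otimes Z(\alpha(A))$, and applying $\id\otimes E_0$ kills it since $(\id\otimes E_0)(u)=0$ for $u\neq 1$. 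Finally, in your $(2)\Rightarrow(1)$ the claim that ``$E(X_u)$ must commute with $\alpha(A)$ by the bimodule property'' is not what the bimodule property gives: the $A$-bimodule identity shows that $X := (\id\otimes E)(W^*_{12})$ satisfies the \emph{intertwining} relation $X(1\otimes a)=\alpha(a)X$, so the hypothesis that naturally finishes this argument is freeness $(3)$, not the relative-commutant condition $(2)$ --- which is exactly how the paper proves $3\Rightarrow 1$. As written, your $(2)\Rightarrow(1)$ implicitly factors through $(2)\Rightarrow(3)$ anyway.
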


\begin{proof}
    $1\Rightarrow 2$. This follows from \cite[Proposition 3.1]{Z19} while using the fact that the canonical conditional expectation $E_0 : A\rtimes_r\G\to A$ is faithful.
    
    $2\Rightarrow 3$. Take $1\neq u\in I$ and $X\in B(H_u)\otimes A$ such that $X(1\otimes a) = \alpha_u(a)X$ for all $a\in A$. Denote
    $X^\alpha = (\id\otimes\alpha)X\in B(H_u)\otimes M(c_0(\G)\otimes A)$. Then
    \begin{align*}
      X^\alpha (1\otimes \alpha(a)) &= ((\id\otimes\alpha)\alpha_u(a)) X^\alpha = (p_u\otimes 1\otimes 1) W^*_{12} \alpha(a)_{23} W_{12} X^\alpha \\
      &= (u^*\otimes 1)(1\otimes \alpha(a)) (u\otimes 1) X^\alpha,
    \end{align*}
    where we identify $p_uc_0(\G)$ with $B(H_u)$. By assumption we obtain
    \begin{displaymath}
      (u\otimes 1)X^\alpha \in (1\otimes \alpha(A))'\cap (B(H_u)\otimes (A\rtimes_r\G)) = B(H_u)\otimes Z(\alpha(A)),
    \end{displaymath}
    hence $X^\alpha \in (u^*\otimes 1)(B(H_u)\otimes \alpha(A))$. Applying the canonical conditional expectation $E_0$ on the second leg of
    $B(H_u)\otimes (A\rtimes_r\G)$ we obtain $X = 0$, since for $u \neq 1$ we have $(\id\otimes E_0)(u^*) = 0$.
    
    $3\Rightarrow 1$. Let $E : A\rtimes\G\to A$ be a conditional expectation; we have in particular $E((x\otimes 1)\alpha(a)) = E(x)a$ and
    $E(\alpha(a)(x\otimes 1)) = aE(x)$ for $x\in \Cred\G$, $a\in A$. Denote $X = (\id\otimes E)(W^*_{12})\in M(c_0(\G)\otimes A)$, where
    $W^*_{12} \in M(c_0(\G)\otimes A\rtimes_r\G)$. For $a\in A$ we have
    $W_{12}^*(1\otimes\alpha(a)) = (\Delta\otimes\id)\alpha(a)W_{12}^* = (\id\otimes\alpha)\alpha(a)W_{12}^*$. Applying $\id\otimes E$ yields
    $X(1\otimes a) = \alpha(a)X$. By assumption this implies $X = p_0\otimes a_0$ with $a_0\in Z(A)$ ; in particular we have $(\id\otimes E)(w) = 0$ for any
    $1\neq w\in I$, thus $E = E_0$.
\end{proof}

In the special case of $A = C(\partial_F\G)$, freeness implies the PAP, and in particular, $C^*$-simplicity. However, it also implies unimodularity and so freeness is probably not the appropriate notion to consider for boundary actions of non-unimodular discrete quantum groups. Moreover it is not clear that freeness passes from $\G$-boundaries to the Furstenberg boundary as does topological freeness in the classical case.

\begin{cor}\label{TopFreenessImpliesPAPKac}
  Let $\G\acts A$ be a free action.
  \begin{enumerate}
  \item If $A$ is $\G$-injective then $\G$ is unimodular.
  \item If $A = C(\partial_F\G)$ is the Furstenberg boundary then $\G$ has the \PAP.
  \end{enumerate}
\end{cor}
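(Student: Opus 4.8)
The plan is to deduce both statements from the uniqueness of conditional expectations granted by freeness (Theorem~\ref{UniqueConditionalExpectations}, $3\Rightarrow 1$), played off against the canonical expectation $E_0$. The one structural fact I will lean on throughout is that $E_0\circ\alpha = \id_A$ while $E_0(x\otimes 1) = h(x)1$, so that $E_0$ restricts along the equivariant inclusion $\Cred\G\hookrightarrow A\rtimes_r\G$ to the map $1h : x\mapsto h(x)1$. Consequently equivariance of $E_0$ entails equivariance of $1h$, and the latter holds iff $\G$ is unimodular, as recorded in the preliminaries. I will also use that the inclusions $\Cred\G\hookrightarrow A\rtimes_r\G$ and $\alpha : A\to A\rtimes_r\G$ are equivariant and completely isometric, being injective $*$-homomorphisms.

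For part 1, I would first use $\G$-injectivity of $A$ to manufacture a $\G$-equivariant conditional expectation. Since $\alpha : A\to A\rtimes_r\G$ is an equivariant uci embedding, injectivity lets me extend the equivariant $*$-isomorphism $\alpha(A)\to A$ along $\alpha(A)\hookrightarrow A\rtimes_r\G$ to an equivariant ucp map $E : A\rtimes_r\G\to A$ with $E\circ\alpha = \id_A$; as $\alpha\circ E$ is then an idempotent ucp map onto $\alpha(A)$, $E$ is a conditional expectation by Tomiyama. By freeness and Theorem~\ref{UniqueConditionalExpectations} the conditional expectation onto $A$ is unique, so $E = E_0$; in particular $E_0$ is $\G$-equivariant. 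Restricting to $\Cred\G$ shows $1h$ is equivariant, whence $\G$ is unimodular.

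For part 2 I would show directly that every $\G$-equivariant ucp map $\Psi : \Cred\G\to C(\partial_F\G)$ equals $1h$, so that each factors $h$ and the \PAP{} follows from Corollary~\ref{PAPUniqueGMap}. Given such a $\Psi$, I extend it along the equivariant uci inclusion $\Cred\G\hookrightarrow A\rtimes_r\G$, using $\G$-injectivity of $A = C(\partial_F\G)$, to an equivariant ucp map $\tilde\Psi : A\rtimes_r\G\to A$. The decisive observation is that $\tilde\Psi\circ\alpha : A\to A$ is a $\G$-equivariant unital cp self-map of the Furstenberg boundary, hence the identity by rigidity of $\partial_F\G$ (the same rigidity invoked in the proof of Lemma~\ref{GBoundariesLem}). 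Thus $\tilde\Psi$ is a conditional expectation onto $A$ whose restriction to $\Cred\G$ is $\Psi$, and freeness forces $\tilde\Psi = E_0$; restricting gives $\Psi = E_0|_{\Cred\G} = 1h$, which factors $h$.

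The main obstacle is the ``extension becomes an expectation'' step in part 2: a priori an injective-envelope extension $\tilde\Psi$ of $\Psi$ controls nothing on the copy of $A$ sitting inside the crossed product, and it is exactly the rigidity of $C(\partial_F\G)$ that upgrades $\tilde\Psi\circ\alpha$ to the identity and hence $\tilde\Psi$ to a conditional expectation. In part 1 the analogous content is merely that $\G$-injectivity produces an equivariant conditional expectation at all; once it does, uniqueness collapses it onto $E_0$ and the restriction identity $E_0|_{\Cred\G}=1h$ delivers unimodularity. The remaining care needed is the bookkeeping verifying that all the relevant inclusions are equivariant and completely isometric, so that the injectivity and rigidity machinery applies verbatim.
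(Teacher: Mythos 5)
Your proposal is correct and follows essentially the same route as the paper: in part 1 you use $\G$-injectivity to produce an equivariant conditional expectation and play it off against $E_0$ via the uniqueness from Theorem~\ref{UniqueConditionalExpectations} (the paper phrases this contrapositively, but the content is identical), and in part 2 you extend $\Psi$ by injectivity, use rigidity of $C(\partial_F\G)$ to see the extension is a conditional expectation, and collapse it onto $E_0$ by freeness. The only cosmetic difference is that you invoke the ``factors $h$'' version of Corollary~\ref{PAPUniqueGMap} rather than the unimodular uniqueness statement; both yield the \PAP{}.
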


\begin{proof}
  1. Assume $\G$ is not unimodular. In particular $h$ is not $\G$-equivariant on $\Cred\G$, see Section~\ref{sec_def_PAP}, hence the canonical expectation $E_0 : A\rtimes_r\G\to A$ is not
  $\G$-equivariant either. On the other hand, since $A$ is $\G$-injective, the identity map $A\to A$ extends to a $\G$-equivariant ucp projection $A\rtimes_r\G\to A$.
  Then Theorem \ref{UniqueConditionalExpectations} implies that $\G\acts A$ cannot be  free.

  2. Let $\Psi : \Cred\G\to C(\partial_F\G)$ be a $\G$-equivariant ucp map. Using the $\G$-equivariant inclusion $\Cred\G\subseteq C(\partial_F\G)\rtimes_r\G$
  and $\G$-injectivity, we obtain a $\G$-equivariant ucp extension $\tilde\Psi : C(\partial_F\G)\rtimes_r\G\to C(\partial_F\G)$. By $\G$-rigidity $\tilde\Psi $
  restricts to the identity map on $C(\partial_F\G)$, hence it is a conditional expectation. Theorem \ref{UniqueConditionalExpectations} implies
  $\tilde\Psi = E_0$, hence $\Psi = 1h$ and Theorem \ref{PAPUniqueGMap} gives us the \PAP, since $\G$ is unimodular by the first point.
\end{proof}

If $\G = G$ is a discrete group, it is a well-known and easy fact that the action $G\acts^\Delta \ell^\infty(G)$ is free. We have the following converse:

\begin{cor}\label{Topological Freeness of linfty}
  The action $\G\acts \ell^\infty(\G)$ is free if and only if $\ell^\infty(\G)$ is commutative.
\end{cor}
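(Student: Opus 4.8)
The plan is to reduce immediately to the second, pointwise formulation of freeness in Definition~\ref{Topological Freeness} and to solve the intertwining equation explicitly using the fusion rules. Thus I fix $1\neq u\in I$ and a purported witness $X\in B(H_u)\otimes\ell^\infty(\G)$ satisfying $X(1\otimes a)=\alpha_u(a)X$ for all $a$, where $\alpha=\Delta$. Decomposing $X=(X_v)_{v\in I}$ with $X_v\in B(H_u)\otimes B(H_v)$, the key computational input is the formula $(p_u\otimes p_v)\Delta(b)=V(x,u\otimes v)\,b\,V(x,u\otimes v)^*$ for $b\in B(H_x)\subseteq\ell^\infty(\G)$, recalled in the preliminaries. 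Testing the intertwining equation against $a=b\in B(H_x)$ for each $x$ separately, I expect the system to decouple completely across the index $v$: the component in $B(H_u)\otimes B(H_v)$ reads $\delta_{v,x}\,X_v(1\otimes b)=V(x,u\otimes v)\,b\,V(x,u\otimes v)^*X_v$.

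From this decoupled system I would extract two facts for each fixed $v$. First, taking $x\neq v$ with $x\subset u\otimes v$ and letting $b$ range over $B(H_x)$ forces $P(x,u\otimes v)X_v=0$; since the $P(x,u\otimes v)$ sum to $\id_{H_u\otimes H_v}$, this gives $X_v=P(v,u\otimes v)X_v$, so $X_v=0$ whenever $v\not\subset u\otimes v$. Second, when $v\subset u\otimes v$, writing $X_v=V(v,u\otimes v)T$ with $T=V(v,u\otimes v)^*X_v$ and using that $V(v,u\otimes v)$ is an isometry, the remaining equation collapses to $T(1\otimes b)=bT$ for all $b\in B(H_v)$. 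Solving this module-intertwining equation identifies $T$ with a bra on the first leg, $T=\eta^*\otimes\id_{H_v}$ for some $\eta\in H_u$, so that $X_v$ can be nonzero precisely when $v\subset u\otimes v$.

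The upshot is that a nonzero witness for a given $u\neq 1$ exists if and only if $v\subset u\otimes v$ for some $v\in I$. I would then rephrase this via rigidity of $\Corep(\G)$: the conjugate equations give $\dim\Hom(v,u\otimes v)=\dim\Hom(\bar u,v\otimes\bar v)$, so $v\subset u\otimes v\iff \bar u\subset v\otimes\bar v$. Hence the action fails to be free exactly when some nontrivial irreducible $\bar u$ occurs in a product $v\otimes\bar v$. Since $v\otimes\bar v$ always contains the trivial corepresentation once and has total dimension $(\dim H_v)^2$, it contains a nontrivial summand if and only if $\dim H_v\geq 2$. Therefore freeness fails if and only if some $\dim H_v\geq 2$, i.e.\ exactly when $\ell^\infty(\G)=\bigoplus_v^{\ell^\infty}B(H_v)$ is noncommutative, which is the claim; the commutative case (all $\dim H_v=1$) recovers the freeness of $G\acts^\Delta\ell^\infty(G)$ as a special case, with the trivial part $p_0\otimes Z(\ell^\infty(\G))$ accounted for by the index $u=1$.

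The step I expect to be most delicate is the passage in the second paragraph from the decoupled equations to the structural statement $X_v=V(v,u\otimes v)(\eta^*\otimes\id_{H_v})$: I must check that letting $b$ exhaust $B(H_x)$ genuinely produces all compressions $V(x,u\otimes v)\,b\,V(x,u\otimes v)^*$ needed to force $P(x,u\otimes v)X_v=0$, and that the solution space of $T(1\otimes b)=bT$ is exactly the $\dim H_u$-dimensional space of first-leg bras, with no extra solutions hiding in the multiplier or von Neumann completion.
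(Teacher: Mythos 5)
Your argument is correct, but it is genuinely different from the paper's proof. The paper disposes of this corollary in two lines by invoking the von Neumann version of Theorem~\ref{UniqueConditionalExpectations} (namely \cite[Theorem~2.14]{MT07}): freeness of $\G\acts\ell^\infty(\G)$ is equivalent to $\ell^\infty(\G)'\cap(\ell^\infty(\G)\rtimes\G)''=Z(\ell^\infty(\G))$, and since $(\ell^\infty(\G)\rtimes\G)''=B(\ell^2(\G))$ this reduces to commutativity of $\ell^\infty(\G)'$, hence of $\ell^\infty(\G)$ in standard position. You instead solve the intertwining equation of Definition~\ref{Topological Freeness} by hand, component by component, and your conclusion --- a nonzero witness for $u\neq 1$ exists iff $v\subset u\otimes v$ for some $v$, iff $u\subset v\otimes\bar v$, iff some $\dim H_v\geq 2$ --- is a valid and in fact more informative statement: it identifies exactly which $u$ obstruct freeness, whereas the paper's route buys brevity at the price of importing the relative-commutant theorem and the fact that the crossed product generates all of $B(\ell^2(\G))$. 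Both of the steps you flag as delicate do go through: taking $b=\id_x$ already forces $P(x,u\otimes v)X_v=0$ for $x\neq v$, and since $b\mapsto 1\otimes b$ is a $(\dim H_u)$-fold multiple of the standard representation of $B(H_v)$ on the finite-dimensional block $H_u\otimes H_v$, the solution space of $T(1\otimes b)=bT$ is exactly the space of first-leg bras $\eta^*\otimes\id_{H_v}$, with no completion issues because each component $X_v$ lives in the finite-dimensional algebra $B(H_u)\otimes B(H_v)$ and $(p_u\otimes p_v)\Delta(a)$ depends on only finitely many components of $a$.

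One small repair is needed: the formula $(p_u\otimes p_v)\Delta(b)=V(x,u\otimes v)\,b\,V(x,u\otimes v)^*$ is stated in Section~\ref{Section Boundary FUF} only for $\F U_F$, where inclusions of irreducibles are multiplicity free; for a general $\G$ you must write $(p_u\otimes p_v)\Delta(b)=\sum_i V_i\,b\,V_i^*$ over an orthonormal basis $(V_i)$ of $\Hom(x,u\otimes v)$. This changes nothing of substance: one still gets $X_v=P(v,u\otimes v)X_v$, and applying each $V_j^*$ to the $x=v$ equation gives $T_j(1\otimes b)=bT_j$ for $T_j=V_j^*X_v$, so $X_v=\sum_j V_j(\eta_j^*\otimes\id_{H_v})$ and the existence criterion $v\subset u\otimes v$ is unchanged.
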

  
\begin{proof}
  We apply the von Neumann version of Theorem~\ref{UniqueConditionalExpectations}, namely
  \cite[Theorem~2.14]{MT07}, which says that $\G\acts \ell^\infty(\G)$ is free if and only if
  $\ell^\infty(\G)'\cap (\ell^\infty(\G)\rtimes \G)'' = Z(\ell^\infty(\G))$. Since $(\ell^\infty(\G)\rtimes \G)'' = B(\ell^2(\G))$ this is equivalent to commutativity of $\ell^\infty(\G)'$, hence of $\ell^\infty(\G)$ which is standardly
  represented on $\ell^2(\G)$.
\end{proof}

\subsection{Faithfulness}

In this subsection we will prove an analogue of Theorem~\ref{UniqueConditionalExpectations} for faithfulness of actions. The notion of faithfulness was
introduced in \cite{KKSV22} in connection with boundary actions. We give below four new characterizations of faithfulness, which allow in particular to see that
freeness implies faithfulness, by comparing e.g.\ point 3.\ of Proposition~\ref{Faithful Coactions} and the definition of freeness, or
Theorem~\ref{Faithful Coaction Theorem} and Theorem~\ref{UniqueConditionalExpectations}.

\begin{defn}[\cite{KKSV22}]
  The action $\G\acts^\alpha A$ is {\bf faithful} if $N_\alpha := \{\fP_\mu(a) : \mu\in A^*, a\in A\}'' = \ell^\infty(\G)$.
\end{defn}

Recall that the {\em cokernel} $N_\alpha$ of $\alpha$ is a Baaj-Vaes subalgebra of $\ell^\infty(\G)$ \cite[Proposition~2.9]{KKSV22}, in particular it can be
realized as the group von Neumann algebra $\ell^\infty(\H)$ of a closed quantum subgroup $\hat\H\subset\hat\G$ of the dual of $\G$
\cite[Proposition~10.5]{BV05}, and there is a {\em group-like projection} $P_\alpha\in\ell^\infty(\G)$ such that
$N_\alpha = \{f\in\ell^\infty(\G) \mid (1\otimes P_\alpha)\Delta(f) = f\otimes P_\alpha\}$ \cite[Theorem 3.1]{FK18}.

\begin{prop}\label{Faithful Coactions}
  Let $\G$ be a DQG and $A$ be a $\G$-$C^*$-algebra. TFAE:
  \begin{enumerate}
  \item $\G\acts^\alpha A$ is faithful;
  \item if $\varphi\in \ell^1(\G)$ satisfies $\varphi*\mu = \mu$ for every $\mu\in A^*$, then $\varphi=\epsilon$;
  \item if $f\in \ell^\infty(\G)$ satisfies $(f\otimes 1)\alpha(a) = f\otimes a$ for every $a\in A$, then $f \in \C p_0$;
  \end{enumerate}
\end{prop}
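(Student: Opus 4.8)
The plan is to establish the cycle $(1)\Rightarrow(3)\Rightarrow(1)$ alongside the equivalence $(1)\Leftrightarrow(2)$, reducing the first two conditions to statements about the pre-annihilator in $\ell^1(\G)$ of the Poisson transforms $\fP_\mu(a)$, which spans $N_\alpha$ and is governed by the Baaj--Vaes/group-like-projection structure recalled above. Throughout I write $\psi*a=(\psi\otimes\id)\alpha(a)$ for $\psi\in\ell^1(\G)$, and set $(N_\alpha)_\perp=\{\psi\in\ell^1(\G):\psi*a=0\ \forall a\in A\}$.

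First I would reformulate $(2)$ by hand. For $\varphi\in\ell^1(\G)$ and $\mu\in A^*$ one has $(\varphi*\mu)(a)=(\varphi\otimes\mu)\alpha(a)=\mu(\varphi*a)$, so, since $A^*$ separates the relevant elements, the condition $\varphi*\mu=\mu$ for all $\mu$ is equivalent to $\varphi*a=a$ for all $a\in A$. As $\epsilon*a=a$, the admissible $\varphi$ form exactly the coset $\epsilon+(N_\alpha)_\perp$. Moreover $\psi*a=0$ for all $a$ means $\psi(\fP_\mu(a))=(\psi\otimes\mu)\alpha(a)=\mu(\psi*a)=0$ for all $\mu,a$, i.e.\ $\psi$ annihilates every Poisson transform; using the description of $N_\alpha$ as the weak$^*$-closed linear span of the $\fP_\mu(a)$, this says precisely $\psi\in(N_\alpha)_\perp$ in the usual predual sense. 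Hence $(2)$ holds iff $(N_\alpha)_\perp=\{0\}$ iff $N_\alpha=\ell^\infty(\G)$, which is $(1)$.

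For $(1)\Rightarrow(3)$ I would slice the defining relation of $(3)$. Given $f$ with $(f\otimes1)\alpha(a)=f\otimes a$ for all $a$ and any $\omega\in\ell^1(\G)$, applying $\omega\otimes\id$ yields $(\omega f)*a=\omega(f)\,a=\omega(f)\,(\epsilon*a)$, where $\omega f\in\ell^1(\G)$ is defined by $(\omega f)(g)=\omega(fg)$; thus $\omega f-\omega(f)\epsilon\in(N_\alpha)_\perp$. Under $(1)$ this pre-annihilator is $\{0\}$, so $\omega f=\omega(f)\epsilon$ for all $\omega$, i.e.\ $fg=\epsilon(g)f$ for all $g\in\ell^\infty(\G)$. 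Testing against $g$ supported off the trivial corepresentation forces all components of $f$ away from $p_0$ to vanish, so $f\in\C p_0$, which is $(3)$.

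The remaining and hardest step is $(3)\Rightarrow(1)$, equivalently that an unfaithful action has $\mathcal F:=\{f:(f\otimes1)\alpha(a)=f\otimes a\ \forall a\}\neq\C p_0$. Rewriting the condition as $(f\otimes1)(\alpha(a)-1\otimes a)=0$ for all $a$ exhibits $\mathcal F$ (via its first leg) as a left-annihilator of the right ideal generated by $\{\alpha(a)-1\otimes a\}$. When $N_\alpha\subsetneq\ell^\infty(\G)$, the associated group-like projection $P_\alpha$ differs from $p_0$ and cuts out a nontrivial ``kernel'' quantum subgroup; the plan is to manufacture from this data a non-scalar $f\in\mathcal F$ (classically $\mathcal F=\ell^\infty(\ker)$ and one takes the indicator of a nontrivial kernel element). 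I expect this construction to be the main obstacle, since it requires converting the coideal/$P_\alpha$ description of $N_\alpha$ into an explicit annihilator living in $\mathcal F\setminus\C p_0$, and thus to rely on the Baaj--Vaes machinery (Proposition~2.9 together with \cite{BV05,FK18}) rather than on a direct computation.
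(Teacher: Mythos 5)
Your treatment of $(1)\Leftrightarrow(2)$ and of $(1)\Rightarrow(3)$ is correct and amounts to the paper's own argument read through the duality between $\ell^1(\G)$ and $\ell^\infty(\G)$: both reduce to the observation that $\psi*a=0$ for all $a\in A$ iff $\psi$ annihilates every Poisson transform $\fP_\mu(a)$, hence all of $N_\alpha$ (like you, the paper uses here that the weak$^*$-closed span of the $\fP_\mu(a)$ exhausts $N_\alpha$). The paper organizes things as the cycle $1\Rightarrow2\Rightarrow3\Rightarrow1$ and proves $\neg 3\Rightarrow\neg 2$ by exhibiting $\varphi f\neq\epsilon$ with $(\varphi f)*\mu=\mu$; that is just the contrapositive of your slicing computation, so there is no substantive difference on these implications.

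The genuine gap is $(3)\Rightarrow(1)$, which you explicitly leave as a ``plan.'' The construction you anticipate being the main obstacle is in fact a one-line application of the group-like projection: the paper simply takes $f=P_\alpha$. Every $g\in N_\alpha$ satisfies $(1\otimes P_\alpha)\Delta(g)=g\otimes P_\alpha$, and applying $\epsilon\otimes\id$ gives $P_\alpha g=\epsilon(g)P_\alpha$. Specializing to $g=\fP_\mu(a)$, which lies in $N_\alpha$ and has $\epsilon(\fP_\mu(a))=\mu(a)$, yields $(P_\alpha\otimes\mu)\alpha(a)=\mu(a)P_\alpha$ for every $\mu\in A^*$, i.e.\ $(P_\alpha\otimes 1)\alpha(a)=P_\alpha\otimes a$ for all $a\in A$. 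Thus $P_\alpha$ belongs to your set $\mathcal{F}$, and condition $(3)$ forces $P_\alpha=p_0$; since $(1\otimes p_0)\Delta(g)=g\otimes p_0$ holds for \emph{every} $g\in\ell^\infty(\G)$ by the counit property, the defining relation of $N_\alpha$ becomes vacuous and $N_\alpha=\ell^\infty(\G)$. Without this step your argument does not close the cycle, so as written the proposal only establishes $(1)\Leftrightarrow(2)$ and $(1)\Rightarrow(3)$.
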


\begin{proof}
  $1 \Rightarrow 2$. Assume there exists $\epsilon \neq \varphi\in \ell^1(\G)$ such that $\varphi*\mu = \mu$ for every $\mu\in A^*$. Then, $\psi = \varphi-\epsilon$ is a non-zero
  element of $\ell^1(\G)$ such that $\psi*\mu = 0$ for all $\mu\in A^*$. In particular, $\psi(\fP_\mu(a)) = 0$ for all $\mu\in A^*$ and $a\in A$. This shows that
  $\psi|_{N_\alpha} = 0$ and hence $N_\alpha\neq \ell^\infty(\G)$.
    
  $2 \Rightarrow 3$. Assume that there exists $f\in \ell^\infty(\G)$, $f\notin \C p_0$, such that $(f\otimes 1)\alpha(a) = f\otimes a$ for every $a\in A$. We
  can choose $\varphi\in \ell^1(\G)$ such that $\varphi(f(1-p_0)) \neq 0$ and $\varphi(f)=1$. Then, for $\mu\in A^*$ and $a\in A$, an application of $\varphi\otimes \mu$ to the above equation implies $(\varphi f)*\mu(a) = \mu(a)$.  In other words, $(\varphi f)*\mu = \mu$ for every $\mu\in A^*$, but we have $\varphi f \neq \epsilon$.

  $3 \Rightarrow 1$. Let $P_\alpha \in N_\alpha$ be the group-like projection corresponding to $N_\alpha$. For any $f\in N_\alpha$ we have
  $(1\otimes P_\alpha)\Delta(f) = f\otimes P_\alpha$, hence $P_\alpha f = \epsilon(f)P_\alpha$. Applying this to $f = \fP_\mu(a)$, for $a\in A$, $\mu\in A^*$,
  we get $(P_\alpha\otimes\mu)\alpha(a) = \mu(a)P_\alpha$. It follows that $(P_\alpha\otimes 1)\alpha(a) = P_\alpha \otimes a$ for all $a\in A$, which by
  assumption implies $P_\alpha\in\C p_0$. Since $p_0$ is the support of the co-unit, the condition $(1\otimes P_\alpha)\Delta(f) = f\otimes P_\alpha$ becomes
  void and we have $N_\alpha = \ell^\infty(\G)$.
\end{proof}

Note that every $\G$-invariant state $\tau : \Cred\G\to \C$ extends by $\G$-injectivity to a $\G$-equivariant conditional expectation
$E : C(\partial_F\G)\rtimes_r\G\to C(\partial_F\G)$ such that $E(\Cred\G) = \C 1$. This fact was exploited in \cite{KKSV22} to show that faithfulness of
$\G\acts C(\partial_F\G)$ implies that $h$ is the only possible $\G$-invariant state on $\Cred\G$, see also \cite{AS22}. In the same spirit, our next result
establishes a characterization of faithfulness of $\G\acts A$ in terms of the uniqueness of conditional expectations $A\rtimes_r\G\to A$ that restrict to states on $\Cred\G$.

\begin{thm}\label{Faithful Coaction Theorem}
    Let $\G$ be a DQG and $A$ be a $\G$-$C^*$-algebra. TFAE:
    \begin{enumerate}
        \item there exists a unique conditional expectation $E : A\rtimes_r \G\to A$ such that $E(\Cred\G) = \C 1$;
        \item $\alpha(A)'\cap (\Cred\G \otimes 1) = \C 1$;
        \item $\G\acts^\alpha A$ is faithful.
    \end{enumerate}
\end{thm}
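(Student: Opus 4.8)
The plan is to mirror the proof of Theorem~\ref{UniqueConditionalExpectations}, proving the cycle $3\Rightarrow1\Rightarrow2\Rightarrow3$ and exploiting the two characterisations of faithfulness from Proposition~\ref{Faithful Coactions}. Throughout I use that the canonical $E_0$ is a conditional expectation with $E_0(\Cred\G)=\C1$ (indeed $E_0|_{\Cred\G}=h$), so the content of~1 is that $E_0$ is the \emph{only} such expectation. I expect $3\Rightarrow1$ and $1\Rightarrow2$ to be routine adaptations of arguments already in the paper, while the genuine difficulty lies in $2\Rightarrow3$, which requires transporting the ``function-algebra'' condition defining faithfulness (a statement about $\ell^\infty(\G)$ and $\ell^1(\G)$) into the ``group-algebra'' condition~2 (a statement about the relative commutant inside $\Cred\G\otimes1$).

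For $3\Rightarrow1$ I would reuse the device from the proof of Theorem~\ref{UniqueConditionalExpectations}: given a conditional expectation $E$ with $E(\Cred\G)=\C1$, set $X=(\id\otimes E)(W_{12}^*)$. Since $E$ restricts to a state $\phi=E|_{\Cred\G}$ valued in $\C1$, this $X$ equals $f\otimes1$ with $f=(\id\otimes\phi)(W^*)\in\ell^\infty(\G)$. The same computation as in that proof gives $X(1\otimes a)=\alpha(a)X$, i.e.\ $\alpha(a)(f\otimes1)=f\otimes a$ for all $a$; taking adjoints yields $(f^*\otimes1)\alpha(a)=f^*\otimes a$, so by faithfulness in the form of Proposition~\ref{Faithful Coactions}(3) we get $f\in\C p_0$. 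Unitality of $E$ forces $f=p_0$, whence $(\id\otimes E)(w)=0$ for every nontrivial $w\in\Irr(\G)$; thus $\phi=h$ on coefficients and $E=E_0$ on the generators $\alpha(a)(x\otimes1)$.

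For $1\Rightarrow2$ I would argue by contraposition, making explicit the Radon--Nikodym idea behind \cite[Proposition~3.1]{Z19}. Suppose $R:=\alpha(A)'\cap(\Cred\G\otimes1)\neq\C1$; as a unital $C^*$-subalgebra of $\Cred\G\otimes1$ it then contains a positive $x\otimes1$ with $x\notin\C1$, normalised so that $h(x)=1$ (possible since $h$ is faithful). Because $x\otimes1\in\alpha(A)'$, so is $x^{1/2}\otimes1$, and one checks that $E_x(y):=E_0((x^{1/2}\otimes1)\,y\,(x^{1/2}\otimes1))$ is again a conditional expectation onto $A$: it is ucp, it is an $A$-bimodule map, and $E_x(\alpha(a))=\alpha(a)E_0(x\otimes1)=\alpha(a)$. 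Moreover $E_x(w\otimes1)=h(x^{1/2}wx^{1/2})1\in\C1$, so $E_x(\Cred\G)=\C1$, while $E_x(x\otimes1)=h(x^2)1\neq h(x)1=E_0(x\otimes1)$ because the strict Cauchy--Schwarz inequality gives $h(x^2)>h(x)^2=1$ for $x\notin\C1$. Thus $E_x\neq E_0$, contradicting uniqueness.

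The remaining implication $2\Rightarrow3$ is where I expect the real work. Arguing by contraposition, suppose the action is not faithful; by Proposition~\ref{Faithful Coactions}(2) there is $\varphi\in\ell^1(\G)$, $\varphi\neq\epsilon$, with $(\varphi\otimes\id)\alpha=\id_A$, and one checks (injectivity of the Fourier transform, $\lambda(\epsilon)=1$) that $x:=(\varphi\otimes\id)(W)\in\Cred\G$ is not a scalar. The plan is to show that $x\otimes1$ commutes with $\alpha(A)$, so that $x\otimes1\in R\setminus\C1$ and condition~2 fails; this is the quantum counterpart of the classical fact that, when $G\acts A$ has nontrivial kernel $K$, the unitaries $\lambda(k)$ for $k\in K$ lie in $\alpha(A)'$. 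Concretely, writing both products through the multiplicative unitary one reduces the commutator $(x\otimes1)\alpha(a)-\alpha(a)(x\otimes1)$ to the slice $(\varphi\otimes\id\otimes\id)\bigl(W_{12}[\alpha(a)_{23}-(\id\otimes\alpha)\alpha(a)]\bigr)$, using the covariance relation $\alpha(a)_{23}W_{12}=W_{12}(\id\otimes\alpha)\alpha(a)$ that follows from the coaction identity together with $\Delta=W^*(1\otimes\cdot)W$. The hard part will be to verify, with the correct bookkeeping of legs and Sweedler components, that the defining property $(\varphi\otimes\id)\alpha=\id_A$ makes this slice vanish: this amounts to proving that the ``kernel'' functionals are exactly those whose Fourier transforms land in the relative commutant, and it is the crux on which the whole equivalence hinges.
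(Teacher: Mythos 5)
Your implications $3\Rightarrow 1$ and $1\Rightarrow 2$ are fine. The first is the paper's argument essentially verbatim. For the second, the paper instead runs the perturbation directly (with $(1-x)^{1/2}$ and the multiplicative-domain trick, concluding that $E_0$ restricts to a faithful $*$-character on the relative commutant), but your contrapositive version with $E_x(y)=E_0((x^{1/2}\otimes 1)y(x^{1/2}\otimes 1))$ and strict Cauchy--Schwarz for the faithful state $h$ is an equally valid packaging of the same Radon--Nikodym idea.

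The gap is in $2\Rightarrow 3$, precisely the step you flag as the crux: the claim that $(\varphi\otimes\id)\alpha=\id_A$ with $\varphi\neq\epsilon$ forces $x=(\varphi\otimes\id)(W)$ into $\alpha(A)'$ is \emph{false}, so no amount of bookkeeping will make the slice vanish. The hypothesis on $\varphi$ is an averaged condition (it only says that $\varphi$ agrees with $\epsilon$ on the cokernel $N_\alpha$), whereas $x\otimes 1\in\alpha(A)'$ forces every ``Fourier coefficient'' of $x$ to intertwine trivially; by your own reduction, the vanishing of $(\varphi\otimes\id\otimes\id)(W_{12}[\alpha(a)_{23}-(\id\otimes\alpha)\alpha(a)])$ would require $(\psi\otimes\id)\alpha(a)=\psi(1)a$ not just for $\psi=\varphi$ but for all translates $\psi=\varphi(z\,\cdot\,)$, $z\in\ell^\infty(\G)$, which is the pointwise condition of Proposition~\ref{Faithful Coactions}(3), not condition (2). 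A classical counterexample: let $G=\Z/2\times\Z/2$ act on $A=\C^2$ through the projection onto the second factor, by the flip $F$. The action is not faithful (kernel $K=\Z/2\times\{0\}$), and $\varphi=\delta_{(0,0)}+\delta_{(0,1)}-\delta_{(1,1)}$ satisfies $\varphi*\mu=\mu$ for all $\mu\in A^*$ (the coefficients sum to $1$ on $K$ and to $0$ on the other coset) while $\varphi\neq\epsilon$. Yet $x=(\varphi\otimes\id)(W)=1+\lambda(0,1)-\lambda(1,1)$ satisfies $(x\otimes 1)\alpha(a)-\alpha(a)(x\otimes 1)=\alpha(F(a)-a)\,((\lambda(0,1)-\lambda(1,1))\otimes 1)$, which is nonzero whenever $F(a)\neq a$. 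So your construction does not produce an element of $\alpha(A)'\cap(\Cred\G\otimes 1)\setminus\C 1$.

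The repair is to not use characterization (2) of faithfulness at all here. The paper proves $2\Rightarrow 3$ directly by verifying characterization (3) of Proposition~\ref{Faithful Coactions}: given $1\neq u\in I$ and $x\in B(H_u)$ with $\alpha(a)(x\otimes 1)=x\otimes a$ for all $a$, applying $\id\otimes\alpha$ shows that $u_{12}(x\otimes 1\otimes 1)$ commutes with $\alpha(a)_{23}$ \emph{before} any slicing, so every coefficient $((x\varphi)\otimes\id)(u)\otimes 1$, $\varphi\in B(H_u)^*$, lies in $\alpha(A)'\cap(\Cred\G\otimes 1)$; condition 2 makes these scalars, pairing with $h$ forces them to vanish since $u\neq 1$, and linear independence of matrix coefficients gives $x=0$. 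If you insist on a contrapositive, you must start from a nonzero $p_uf$ with the pointwise relation $(f\otimes 1)\alpha(a)=f\otimes a$ (e.g.\ $f=P_\alpha$, the group-like projection of the cokernel), not from an arbitrary $\ell^1$-witness.
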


\begin{proof}
  The proofs of $(2 \Rightarrow 3)$ and $(3 \Rightarrow 1)$ follow from essentially the same reasoning as in the proof of Theorem
  \ref{UniqueConditionalExpectations}. The proof of $(1 \Rightarrow 2)$ follows from the same argument used in the proof of \cite[Proposition 3.1]{Z19} with
  only a minor adjustment.
    
  $1 \Rightarrow 2$. The canonical expectation $E_0 : A\rtimes_r \G\to A$ is the unique conditional expectation satisfying $E_0(\Cred\G) = h(\Cred\G) =
  \C$. Let $x\in \alpha(A)'\cap (\Cred\G\otimes 1)$ be a self-adjoint element with
  $\|x\| < 1$, so that $1 - x \in \alpha(A)'\cap (\Cred\G\otimes 1)$ is positive and invertible. Then $1 - E_0(x)\in \C 1$ is positive and invertible. Now,
  define a ucp map $\theta : A\rtimes_r\G \to A$ by setting
  \begin{displaymath}
    \theta(z) = E_0((1-x)^{1/2}z(1-x)^{1/2})(1 - E_0(x))^{-1}.
  \end{displaymath}
  It is clear that $\theta(a) = a$ for $a\in A$ and $\theta(\Cred\G) = \C$, where the latter follows because $(1-x)^{1/2}\in \Cred\G$. Therefore $\theta = E_0$ by assumption. So,
  \begin{displaymath}
    E_0(x)(1 - E_0(x)) = E_0((1-x)^{1/2}x(1-x)^{1/2}) = E_0(x - x^2)
  \end{displaymath}
  which implies that $E_0(x^2) = E_0(x)^2$ and $x$ is in the multiplicative domain of $E_0$. We have thus shown that $E_0$ restricts to a $*$-character on $\alpha(A)'\cap (\Cred\G\otimes 1)$. Since $E_0$ is faithful, this restriction is injective, hence we must have
  $\alpha(A)'\cap (\Cred\G\otimes 1) = \C 1$.

  $2 \Rightarrow 3$. Take $1\neq u\in I$ and $x\in B(H_u)$ such that $\alpha(a)(x\otimes 1) = x\otimes a$ for all $a\in A$. An application of
  $\id\otimes \alpha$ shows that
  \begin{align*}
    (x\otimes 1 \otimes 1) \alpha(a)_{23} = (\id\otimes\alpha)[\alpha(a)(x\otimes 1\otimes 1)] &= W^*_{12}\alpha(a)_{23}W_{12}(x\otimes 1\otimes 1) \\
                                                                                             &= u^*_{12}\alpha(a)_{23}u_{12}(x\otimes 1\otimes 1).
  \end{align*}
  Therefore, $(u\otimes 1)(x\otimes 1\otimes 1) \in (1\otimes \alpha(A))'$. Then for any $\varphi\in B(H_u)^*$ the element
  $((x\varphi)\otimes\id)(u)\otimes 1 \in \alpha(A)'\cap (\Cred\G \otimes 1)$ is scalar by assumption, and since $u\neq 1$ this is only possible if it vanishes, so that
  necessarily $x=0$. This shows that the third characterization of Proposition~\ref{Faithful Coactions}, is satisfied, since we must have $x := p_u f = 0$ for
  all $u\neq 1$.

  $3 \Rightarrow 1$. As in the proof of Theorem \ref{UniqueConditionalExpectations}, if $E : A\rtimes_r\G\to A$ is a conditional expectation and
  $X = (\id\otimes E)(W^*_{12})\in M(c_0(\G)\otimes A)$, the coaction equation implies $X(1\otimes a) = \alpha(a)X$ for all $a\in A$. If moreover
  $E(\Cred\G) = \C 1$ we have $X = f\otimes 1$ with $f\in \ell^\infty(\G)$ and Proposition~\ref{Faithful Coactions} implies $f\in \C p_0$. This shows that
  $E = E_0$.
\end{proof}
  
\begin{eg}
Clearly, the action of $\G$ on $\ell^\infty(\G)$ is always faithful.

For examples with $\G$-boundaries, recall Theorem~6.9 from \cite{ASK23} which proves that for $\G$ exact, $C^*$-simplicity implies the faithfulness of the action on $C(\partial_F\G)$. This shows that the action on $C(\partial_F\G)$ is faithful in the following cases: $\G = \F U_F$, $\G = \F O_F$ with $F\bar F = \pm I_N$ and $\|F\|^8\leq\frac 38 \Tr(FF^*)$, and the dual $\G$ of $\mathrm{Aut}(B,\psi)$ where $\psi$ is a $\delta$-trace and $\dim(B)\geq 8$, by \cite{B97,VV07,B13} respectively.

Note on the other hand that faithfulness of the action of $\F O_F$ on its quantum Gromov boundary $C(\partial_G \F O_F)$ is proved in \cite{KKSV22} without restriction on $F$ (subject to $F\bar F = \pm I_N$). The faithfulness of the action on $C(\partial_F \F O_F)$ follows because we have a $\G$-equivariant uci map $C(\partial_G\F O_F) \to C(\partial_F \F O_F)$.
\end{eg}

\subsection{Strong $C^*$-Faithfulness}

\begin{defn}\label{Def C*faithful}
  Let $A$ be a $\G$-$C^*$-algebra. We say $\G\acts A$ is {\bf $C^*$-faithful} if for every $\eta > 0$ and every minimal central projection $p\in c_c(\G)$ with
  $\epsilon(p)=0$ there exists $k\in\N^*$ and $b\in (A\otimes M_k(\C))_+$ such that $\|b\| = 1$ and $\|(p\otimes b)(\alpha\otimes\id)(b)\|\leq \eta$. We say that
  $\G\acts A$ is {\bf strongly $C^*$-faithful} if the same holds for every $\eta>0$ and for every {\em finite rank} central projection $p\in c_c(\G)$.
\end{defn}

\begin{rem}
  One could consider a strengthened version of (strong) $C^*$-faithfulness by allowing $\eta = 0$ --- and this is indeed what we will achieve in the case of
  the Gromov boundary of $\F U_F$ below. But we think that it would be too strong in general ; and moreover we need the restriction to $\eta>0$ to show that
  (strong) $C^*$-faithfulness passes to the universal Furstenberg boundary. We could also strengthen the definition by imposing $k=1$, but this time our
  verification for the Gromov boundary of $\F U_F$ would not work.
\end{rem}

We prove below that the notions of (strong) $C^*$-faithfulness reduce to the classical notions for actions of classical groups on classical spaces, and we make
the connection with the classical notion of topological freeness. Recall that an action $G\acts X$ is called strongly faithful if for any finite subset
$F \subset G\setminus\{e\}$ there exists $x\in X$ such that $gx \neq x$ for all $g\in F$, see e.g. \cite[Lemma~4]{delaHarpe} and \cite[Section~2.1]{FLMMS}.

\begin{prop} \label{Prop C*Faithfulness Classical}
  Assume $\G = G$ is a classical discrete group acting on a commutative $C^*$-algebra $A = C_0(X)$. Then $\G \acts A$ is (resp.\ strongly) $C^*$-faithful {\bf
    iff} $G\acts X$ is (resp.\ strongly) faithful. If the action $G\acts X$ is topologically free then it is strongly faithful, and the converse is true if $X$
  is compact and the action of $G$ is minimal.
\end{prop}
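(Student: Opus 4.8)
The plan is to unwind the definition of (strong) $C^*$-faithfulness in the commutative classical setting into a concrete condition on the test functions $b$, after which everything reduces to elementary point-set topology. Write $A = C_0(X)$ with $X$ locally compact Hausdorff, and recall that here $I = G$ with each $H_g\simeq\C$, so a minimal central projection with $\epsilon(p)=0$ is exactly $p=p_g$ for some $g\in G\setminus\{e\}$, while a finite rank central projection (with $\epsilon(p)=0$, as the trivial corepresentation must be excluded for the condition to be satisfiable) is $p=\sum_{g\in F}p_g$ for a finite $F\subseteq G\setminus\{e\}$. For $b\in (A\otimes M_k(\C))_+ = C_0(X,M_k(\C))_+$ the $g$-th component of $(\alpha\otimes\id)(b)$ is $(\alpha_g\otimes\id)(b)$, acting entrywise by $x\mapsto b(gx)$; consequently $(p\otimes b)(\alpha\otimes\id)(b)$ is supported on $F$ with $g$-th component $x\mapsto b(x)b(gx)$, so that
\[
  \|(p\otimes b)(\alpha\otimes\id)(b)\| = \max_{g\in F}\,\sup_{x\in X}\|b(x)\,b(gx)\|_{M_k}.
\]
Carrying out this identification carefully inside the multiplier algebra $M(c_0(G)\otimes A\otimes M_k(\C))$ is the only step requiring bookkeeping, and I expect it to be the main (though modest) obstacle.

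Granting the displayed formula, I would prove both equivalences at once. For the ``if'' direction, assume $G\acts X$ is (strongly) faithful and fix a finite $F\subseteq G\setminus\{e\}$ and $\eta>0$; by (strong) faithfulness choose $x_0$ with $gx_0\neq x_0$ for all $g\in F$ (in the non-strong case $F$ is a single element). For each $g\in F$, separating $x_0$ from $gx_0$ by disjoint open sets $V\ni x_0$, $W\ni gx_0$ and putting $U_g=V\cap g^{-1}W$ produces an open $U_g\ni x_0$ with $gU_g\cap U_g=\emptyset$; intersecting over $F$ gives a single open $U\ni x_0$ with $gU\cap U=\emptyset$ for all $g\in F$. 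A Urysohn function $b\in C_0(X)_+$ with $\|b\|=1$ supported in $U$ then forces $b(x)b(gx)=0$ for all $x$ and all $g\in F$, so the displayed norm is $0$; note $k=1$ suffices classically, unlike the general quantum case. For the ``only if'' direction I argue by contraposition: if (strong) faithfulness fails there is a finite $F\subseteq G\setminus\{e\}$ with $X=\bigcup_{g\in F}\Fix(g)$ (a single $g$ with $\Fix(g)=X$ in the non-strong case), and then for any $b$ with $\|b\|=1$ and any $\epsilon>0$ a point $x$ with $\|b(x)\|>1-\epsilon$ lies in some $\Fix(g)$, so $\|b(x)b(gx)\|=\|b(x)\|^2>(1-\epsilon)^2$; hence the displayed norm equals $1$ for every admissible $b$ and no $\eta<1$ can be achieved, contradicting (strong) $C^*$-faithfulness.

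Finally, for the relation with topological freeness I would use standard facts about nowhere-dense sets. If $G\acts X$ is topologically free then each $\Fix(g)$ with $g\neq e$ is closed with empty interior, and a finite union of closed sets with empty interior again has empty interior; thus $\bigcup_{g\in F}\Fix(g)\subsetneq X$ for every finite $F\subseteq G\setminus\{e\}$, which is exactly strong faithfulness. For the converse, assume $X$ compact, the action minimal and strongly faithful, and suppose some $\Fix(g_0)$ with $g_0\neq e$ contains a nonempty open set $U$. By minimality the translates $\{sU: s\in G\}$ cover $X$, and by compactness finitely many $s_1U,\dots,s_nU$ already cover; since $s_iU\subseteq s_i\Fix(g_0)=\Fix(s_ig_0s_i^{-1})$ we obtain $X=\bigcup_i\Fix(s_ig_0s_i^{-1})$ with each $s_ig_0s_i^{-1}\neq e$, contradicting strong faithfulness. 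The only background facts invoked are that a locally compact Hausdorff $X$ supplies the separating neighborhoods and Urysohn functions used above, and that minimality is used in the form ``every nonempty open set has translates covering $X$''.
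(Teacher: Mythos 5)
Your proposal is correct and follows essentially the same route as the paper's proof: a Urysohn function supported on an open set disjoint from its $F$-translates for one direction, evaluation of $b$ at a point where $\|b(x)\|$ is close to $1$ for the other, the nowhere-density of a finite union of the closed sets $\Fix(g)$ for topological freeness implies strong faithfulness, and compactness plus minimality plus conjugation for the converse. The only differences (making the norm formula explicit up front, arguing one implication by contraposition rather than directly with $\eta=\tfrac14$) are cosmetic.
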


\begin{proof}
  Assume that $\G\acts A$ is strongly $C^*$-faithful and take $F\subset G\setminus\{e\}$ finite. Considering the characteristic function $p\in c_c(\G)$ of $F$
  and $\eta = \frac 14$ we obtain $b \in (A\otimes M_k(\C))_+ \simeq C_0(X,M_k(\C)_+)$ such that $\|b\| = 1$ and $\|b \times gb\| \leq \frac 14$ for all
  $g\in F$. Pick a point $x\in X$ such that $\|b(x)\| > \frac 12$. We claim that $x$ cannot be fixed by any $g\in F$: otherwise we would have
  $\|(b\times gb)(x)\| = \|b(x)b(g^{-1}x)\| = \|b(x)\|^2 > \frac 14$.

  For the reverse implication, take also $F\subset G\setminus\{e\}$ finite and $x\in X$ such that $x\neq gx$ for all $g\in F$. For each $g\in F$ choose open
  subsets $U_g$, $V_g\subset X$ such that $x\in U_g$, $g^{-1}x \subset V_g$ and $U_g\cap V_g = \emptyset$. Consider $U = \bigcap_{g\in F} U_g\cap g V_g$, which
  is still an open subset containing $x$. By construction we have $U\cap g^{-1}U = \emptyset$ for all $g\in F$. Now it suffices to take $a\in C_0(X)_+$ that
  vanishes on $X\setminus U$ and such that $\|a\| = 1$: then for any $y\in X$ and $g\in F$ at least one of $a(y)$ and $(ga)(y)$ vanishes, so that
  $a \times ga = 0$. This proves strong $C^*$-faithfulness with $\eta = 0$ and $k = 1$.

  Assume now that $G\acts X$ is topologically free and take $F\subset G\setminus\{e\}$ finite. By assumption $\Fix(g)$ has empty interior for any $g\in
  G$. Since $F$ is finite, $Y = \bigcup_{g\in F} \Fix(g)$ still has empty interior and we can find $x\in X\setminus Y$, thus proving strong faithfulness.

  Assume finally that $X$ is compact and that $G \acts X$ is minimal but not topologically free. Then there exists $g\in G$, $g\neq e$ such that $\Fix(g)$
  contains a non-empty open subset $U$. By minimality, $(h U)_{h\in G}$ is an open cover of $X$ and by compactness we can find $h_1, \ldots, h_n \in G$ such that
  $\bigcup_i h_i U = X$. We have then $\bigcup_i \Fix(h_igh_i^{-1}) = X$, and this shows that strong faithfulness fails for the finite subset
  $\{h_igh_i^{-1} ; i = 1, \ldots, n\} \subset G\setminus\{e\}$ .
\end{proof}

\begin{prop} \label{Prop C*faithful implies faithful}
  Let $A$ be a $\G$-$C^*$-algebra. If $\G\acts^\alpha A$ is $C^*$-faithful, then it is faithful.
\end{prop}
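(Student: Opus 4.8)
The plan is to argue by contraposition, using the third characterization of faithfulness in Proposition~\ref{Faithful Coactions}. So I would assume that $\G\acts^\alpha A$ is \emph{not} faithful and show that, once $\eta<1$, the defining estimate of $C^*$-faithfulness must fail for a suitable nontrivial central projection $p$; this contradicts $C^*$-faithfulness and proves the proposition.

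First I would unpack the failure of faithfulness. By Proposition~\ref{Faithful Coactions} there is $f\in\ell^\infty(\G)\setminus\C p_0$ with $(f\otimes 1)\alpha(a)=f\otimes a$ for all $a\in A$. Since $p_0\ell^\infty(\G)=\C p_0$ we have $p_0f\in\C p_0$, so $f\notin\C p_0$ forces a \emph{nontrivial} minimal central projection $p$ (hence $\epsilon(p)=0$) with $x:=pf\neq 0$ in $B(H_p)$. Cutting the identity $(f\otimes 1)\alpha(a)=f\otimes a$ down by $p\otimes 1$ and writing $\alpha_p=(p\otimes 1)\alpha$, I obtain the level-$p$ relation
\begin{displaymath}
  (x\otimes 1)\alpha_p(a) = x\otimes a \qquad\text{for all } a\in A,
\end{displaymath}
an identity in $B(H_p)\otimes A$. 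Tensoring with $\id_k$ and extending by linearity and continuity upgrades this to $(x\otimes 1\otimes 1)(\alpha_p\otimes\id)(b)=x\otimes b$ for every $b\in A\otimes M_k(\C)$, where $x$ sits on the $B(H_p)$ leg and $b$ on the $A\otimes M_k(\C)$ legs.

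Now I would feed this into the $C^*$-faithfulness hypothesis. Fix $\eta>0$ and take $k$ and $b\in(A\otimes M_k(\C))_+$ with $\|b\|=1$ and $\|(p\otimes b)(\alpha\otimes\id)(b)\|\le\eta$. Restricting the first leg by $p$, this element equals $(1\otimes b)(\alpha_p\otimes\id)(b)$ in $B(H_p)\otimes A\otimes M_k(\C)$. Multiplying on the left by $x\otimes 1$, using that $x\otimes 1$ commutes with $1\otimes b$ together with the level-$p$ relation, gives
\begin{displaymath}
  (x\otimes 1)(1\otimes b)(\alpha_p\otimes\id)(b) = (1\otimes b)(x\otimes b) = x\otimes b^2.
\end{displaymath}
Since $b\ge 0$ and $\|b\|=1$ we have $\|x\otimes b^2\|=\|x\|\,\|b\|^2=\|x\|$, whereas the left-hand side has norm at most $\|x\|\,\eta$. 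Hence $\|x\|\le\eta\|x\|$; as $x\neq 0$ this forces $\eta\ge 1$, contradicting the freedom to choose $\eta<1$. Therefore $\G\acts^\alpha A$ is faithful.

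The only delicate point is the tensor-leg bookkeeping: one must keep track that the obstruction $x$ lives on the quantum-group factor $B(H_p)$ while $b$ lives on $A\otimes M_k(\C)$, so that $x\otimes 1$ commutes past $1\otimes b$ and the level-$p$ relation applies verbatim after tensoring with $\id_k$. Once the legs are correctly identified the norm estimate is immediate, so I expect this to be a short argument with no genuine analytic difficulty; note also that only plain $C^*$-faithfulness is used, since $p$ is automatically a minimal central projection with $\epsilon(p)=0$.
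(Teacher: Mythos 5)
Your argument is correct, but it proceeds differently from the paper's. You argue by contraposition: starting from the failure of faithfulness you extract, via characterization (3) of Proposition~\ref{Faithful Coactions}, a nonzero block $x=pf\in B(H_p)$ at some minimal central $p\neq p_0$ satisfying $(x\otimes 1)\alpha_p(a)=x\otimes a$, and then the identity $(x\otimes 1)\bigl[(p\otimes b)(\alpha\otimes\id)(b)\bigr]=x\otimes b^2$ forces $\|x\|\leq\eta\|x\|$, which is absurd for $\eta<1$. The leg bookkeeping and the norm computation $\|x\otimes b^2\|=\|x\|$ (using $b\geq 0$, $\|b\|=1$) are all in order. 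The paper instead argues directly with the cokernel $N_\alpha$: from the element $b$ it builds $f=(\id\otimes\nu)\bigl[(1\otimes b)(\alpha\otimes\id)(b)\bigr]\in N_\alpha$, where $\nu$ is a state evaluating at $1$ on $C^*(b)$, checks $\|f\|\leq 1$, $p_0f=p_0$ and $\|pf\|\leq\eta$, and concludes $p_0\in N_\alpha$, hence $N_\alpha=\ell^\infty(\G)$ (citing \cite{KKSV22} for that last implication). Your route avoids the cokernel machinery entirely, relying only on Proposition~\ref{Faithful Coactions}(3), and it shows the slightly stronger fact that a \emph{single} value $\eta<1$ in the definition of $C^*$-faithfulness already suffices; the paper's route has the advantage of exhibiting explicit elements of $N_\alpha$ approximating $p_0$, which is closer in spirit to how faithfulness is used elsewhere in the paper. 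Both proofs correctly use only plain (not strong) $C^*$-faithfulness.
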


\begin{proof}
  It suffices to find, for every $\eta>0$ and every minimal central projection $p\in c_c(\G)$ such that $\epsilon(p) = 0$, an element $f\in N_\alpha$ such that
  $\|f\|\leq 1$, $p_0f = p_0$ and $\|pf\|\leq\eta$. Indeed this implies by approximation that $p_0\in N_\alpha$, which entails $N_\alpha = \ell^\infty(\G)$, see \cite{KKSV22}.
  Take the element $b\in (A\otimes M_k(\C))_+$ given by $C^*$-faithfulness (with respect to $\eta$ and $p$). By Hahn-Banach, find a state
  $\nu \in (A\otimes M_k(\C))^*$ which restrict to evaluation at $1$ on $C^*(b) \simeq C(\Sp(b))$, and put
  $f = (\id\otimes\nu)[(1\otimes b)(\alpha\otimes\id)(b)]$. We have $\|b\| = 1$, hence $\|(\alpha\otimes\id)(b)\|=1$, and since $\nu$ is a state, $\|f\|\leq
  1$. Since $p_0$ is the support of the co-unit we have moreover $p_0f = \nu(b^2)p_0 = 1^2 p_0 = p_0$, and by choice of $b$ we have
  $\|pf\|\leq \|(p\otimes b)(\alpha\otimes\id)(b)\|\leq\eta$. Finally, decomposing $\nu = \sum \mu_i\otimes \tau_i$ and $b = \sum a_j\otimes m_j$ into finite sums with
  $\mu_i\in A^*$, $\tau_i\in M_k(\C)^*$, $a_j\in A$, $m_j\in M_k(\C)$ we have $f = \sum \tau_i(m_jm_k) (\id\otimes\mu_ib_j)(\alpha(b_k)) \in N_\alpha$.
\end{proof}

\begin{rem}
Proposition~\ref{Prop C*Faithfulness Classical} shows that for classical groups acting on classical spaces, the implication proved in the previous proposition is an equivalence. This is not the case in general, already for classical groups acting on noncommutative $C^*$-algebras. Indeed, fix $\theta\in\R$ and consider the automorphism $\alpha$ of $M_2(\C)$ given by $\alpha(a)=u_\theta^*a u_\theta$, where $u_\theta = \mathop{\mathrm{diag}}(1,e^{i\theta})$. Then one can check that for any $b\in M_2(\C)\otimes B(\ell^2\N)$ positive such that $\|b\| = 1$, one has $\|b(\alpha\otimes\id)(b)\|\geq 1-|1-e^{i\theta}|$. As a result, the faithful action of $\Z$ or $\Z/N\Z$ generated by $\alpha$ is not $C^*$-faithful if $0 < |1-e^{i\theta}| < 1$. In fact, rather than $C^*$-faithfulness, the interesting notion for us is strong $C^*$-faithfulness ``in presence of minimality'' which we use as a replacement of freeness for quantum boundary actions.
\end{rem}


As a first example let us remark that $\G\acts^\Delta\ell^\infty(\G)$ is
strongly $C^*$-faithful. Indeed, the support of the counit
$p_0\in \ell^\infty(\G)$ is a norm one positive element and
$(p\otimes p_0)\Delta(p_0) = 0$ for every central projection $p\in c_c(\G)$ such
that $\epsilon(p) = 0$.

We prove now strong $C^*$-faithfulness for our central example, the quantum
Gromov boundary of $\F U_F$. The proof relies on a nice combinatorial trick
uncovered by Banica in his proof of Power's Property for $\F U_F$, see
\cite[Lemme~13]{B97}. It will then follow from Corollary~\ref{Cor FUF boundary}
and Corollary~\ref{Strongly C* faithful Implies C*simple and Faithful} that
$\F U_F$ is $C^*$-simple. As mentioned earlier, it was already established by
Banica that $\F U_F$ possesses the PAP and hence is $C^*$-simple.

Recall the notation
$C(\partial_G\F U_F) = B_\infty = B / c_0(\F U_F)$ from Section~\ref{Section
  Boundary FUF}, the associated coaction
$\beta : B_\infty\to M(c_0(\F U_F)\otimes B_\infty)$ and the central elements
$\pi_x \in B_\infty$, for $x\in I$, given by the formula
$\pi_x = \sum_{y\in I}p_{xy}$ in $B$.

\begin{prop}\label{Coaction on Gromov Boundary is Strongly Faithful}
  The action of $\F U_F$ on $C(\partial_G\F U_F)$ is strongly $C^*$-faithful.
\end{prop}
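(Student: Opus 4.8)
The plan is to prove the stronger statement in which $\eta$ is taken to be $0$, as announced in the Remark following Definition~\ref{Def C*faithful}. A finite rank central projection $p\in c_c(\F U_F)$ with $\epsilon(p)=0$ is exactly of the form $p=\sum_{w\in S}p_w$ for a finite subset $S\subseteq I\setminus\{1\}$ (a finite rank central projection with $\epsilon(p)\neq 0$ dominates $p_0$ and so cannot satisfy the required inequality for $\eta<1$, hence only the case $\epsilon(p)=0$ is relevant). Since the $p_w$ are mutually orthogonal, $(p\otimes b)(\beta\otimes\id)(b)$ decomposes as an orthogonal sum over $w\in S$, so it suffices to produce, for a given $S$, a single $k\in\N^*$ and a single $b\in(B_\infty\otimes M_k(\C))_+$ with $\|b\|=1$ such that $(p_w\otimes b)(\beta\otimes\id)(b)=0$ for every $w\in S$ at once. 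Set $L=\max_{w\in S}|w|$ and fix a word $x\in I$ with $|x|>L$. Taking $k=\dim H_x=\dim H_{\bar x}$ and the rank one projection $e=\qdim(x)^{-1}t_xt_x^*\in B(H_x)\otimes B(H_{\bar x})$ coming from the conjugate equations (so $t_x^*t_x=\qdim(x)$), I would let $b$ be the normalization of $(\psi_{x,\infty}\otimes\id)(e)\in B_\infty\otimes M_k(\C)$. This is a non-zero positive element of norm $\leq 1$, since $\psi_{x,\infty}$ is completely positive with $\psi_{x,\infty}(\id_x)=\pi_x$, and it is supported on $\pi_x$ in the $B_\infty$-variable.

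Next I would compute $(\beta\otimes\id)(b)=(\Delta\psi_{x,\infty}\otimes\id)(e)$ and cut it by $p_w$ on the first leg, using the coproduct formula $(p_y\otimes p_z)\Delta(a)=V(x',y\otimes z)\,a\,V(x',y\otimes z)^*$ for $a\in B(H_{x'})$ together with the description of the boundary action in Lemma~\ref{lem_boundary_action}. Because both $b$ and $(\beta\otimes\id)(b)$ are supported on $\pi_x$ in the $B_\infty$-variable, the only irreducibles that can contribute to $(p_w\otimes b)(\beta\otimes\id)(b)$ are those $x'\geq x$ occurring in a tensor product $w\otimes t$ whose second factor also satisfies $t\geq x$. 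A direct inspection of the fusion rules~\eqref{eq_fusion_rules} shows that, since $|x|>L\geq|w|$, every such occurrence arises from a middle cancellation $w=w_1v$, $t=\bar v t_1$, $x'=w_1t_1$ in which both $w_1$ and $\bar v$ are prefixes of $x$; equivalently $w$ must have the palindrome-type shape $x_1\cdots x_i\,\bar x_j\cdots\bar x_1$ with $i+j=|w|$. In particular the long cancellations are excluded purely by the length condition $|x|>L$.

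The heart of the argument is to show that the entangled element $e=\qdim(x)^{-1}t_xt_x^*$ forces all of these residual contributions to vanish, so that no special prefix of $x$ need be chosen to avoid $S$. This is where the combinatorial trick of Banica \cite[Lemme~13]{B97} enters: linking the two copies of $t_x$ through the coaction and the intertwiners and straightening them via the conjugate equations (using $(\id\otimes\qTr_x)(t_xt_x^*)=\id_x$) collapses each residual term into a quantum partial trace over $H_x$ of a coefficient of the non-trivial corepresentation $w$. Exactly as in the identity $(\id\otimes E_0)(w)=0$ for $1\neq w\in I$ exploited in the proof of Theorem~\ref{UniqueConditionalExpectations}, this trace vanishes by the Woronowicz orthogonality relations. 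Running this over all admissible triples $(w_1,v,t_1)$ gives $(p_w\otimes b)(\beta\otimes\id)(b)=0$ for every $w\in S$ simultaneously; rescaling $b$ to norm $1$ completes the construction.

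I expect two points to be the main obstacles. The first is the bookkeeping of the intertwiners $V(\cdot,\cdot\otimes\cdot)$: one must invoke their asymptotic factorization, as recalled in the proof of Lemma~\ref{lem_boundary_action} with error $O(q^{(r-n)/2})$, to justify that the discarded terms are negligible, and it is precisely the passage to the quotient $B_\infty=B/c_0(\F U_F)$, together with the exact vanishing of the leading terms, that allows one to reach $\eta=0$ rather than merely $\eta$ small. The second, and conceptually decisive, point is that the argument genuinely requires $k=\dim H_x>1$: only the full matricial element $t_xt_x^*$ records enough of the corepresentation $w$ to trigger the orthogonality relations, whereas the scalar projection $\pi_x$ (the case $k=1$) retains only central information and cannot kill the palindrome-type collision terms. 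This is the reason strong $C^*$-faithfulness of $\F U_F$ cannot be obtained with $k=1$.
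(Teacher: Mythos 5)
Your proposal takes a genuinely different route from the paper's, and the decisive step is not established. The paper's proof chooses $b=(p_U\otimes 1)\beta(\pi_{\bar u})$ with $U=(u\bar u)^N$ and $N$ supplied by Banica's Lemme~13, precisely so that the two factors of the product have \emph{disjoint central support} in the last leg: for every $s\in I$, one of $(1\otimes p\otimes p_s)(\id\otimes\beta)(b)$ and $(1\otimes p\otimes p_s)b_{13}$ is zero, so the product vanishes exactly, with no intertwiner computation and no asymptotics. Your element $b=(\psi_{x,\infty}\otimes\id)(\qdim(x)^{-1}t_xt_x^*)$ does \emph{not} enjoy this disjointness: as you acknowledge, for $w$ of palindrome type (e.g.\ $w=u\bar u$ and $x=uu\cdots$) the central supports genuinely overlap, and the entire burden of the proof falls on showing that the overlapping matrix blocks are orthogonal. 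Concretely, writing $s=xz$ and $s'=xz'$ for a colliding subobject $s'\subset w\otimes s$, what must vanish is the $t_x$-weighted partial trace over the $x$-strand of the morphism $(\id_w\otimes V(s,x\otimes z))\,V(s',w\otimes s)\,V(s',x\otimes z')^* : H_x\otimes H_{z'}\to H_w\otimes H_x\otimes H_z$, viewed as a map $H_{z'}\to H_w\otimes H_z$. None of the mechanisms you invoke delivers this: the Woronowicz orthogonality relations and the identity $(\id\otimes E_0)(w)=0$ concern the Haar state of the dual compact quantum group, which never enters here; the closing strand must cross either the $w$- or the $z$-strands, so the partial trace is not a priori an intertwiner; and in any case $\Hom(z',w\otimes z)$ is typically nonzero (e.g.\ $\Hom(u,u\bar u\otimes u)=\C$), so no Hom-space argument can apply. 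In the one case I checked ($w=u\bar u$, $x=uu$, $z=z'=u$, Kac case) the partial trace does vanish, but only via a specific cancellation between the ``planar'' part of $V(s',w\otimes s)$ and the correction term projecting onto $H_w$ --- exactly the computation your sketch does not carry out, and which would have to be done uniformly in $w$, $z$, $z'$ and in the non-unimodular case. Banica's Lemme~13 plays no role in your mechanism; citing it does not substitute for this analysis.

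There are also two internal inconsistencies. You claim the conclusion with $\eta=0$ while simultaneously invoking the asymptotic factorization of intertwiners with error $O(q^{(r-n)/2})$ to discard ``negligible'' terms: an argument with asymptotically small errors yields $\eta$ arbitrarily small, not $\eta=0$; the paper reaches $\eta=0$ because its disjointness identity is exact at every finite level. Separately, before normalizing you must check that $(\psi_{x,\infty}\otimes\id)(e)$ has norm bounded away from $0$ modulo $c_0(\F U_F)\otimes M_k(\C)$, and your closing claim that $k=1$ \emph{cannot} work is an assertion, not a proof (the paper only remarks that its own verification would fail with $k=1$).
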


\begin{proof}
  Let $p\in c_c(\F U_F)$ be a finite-rank central projection such that $\epsilon(p) = 0$.  We have $p = \sum_{x\in F} p_x$ for a finite subset $F\subset I$ not
  containing $1$. According to \cite[Lemme~13]{B97} we can find $n\in\N$ such that all irreducible subobjects of $U\otimes x\otimes U$, with $x\in F$ and
  $U = (u\bar u)^n$, are indexed by words starting with $u$ and ending with $\bar u$, or by the empty word $1$. We consider the projection
  $b_0 = (p_U\otimes 1)\beta(\pi_{\bar u}) \in B(H_U)\otimes B \subset B(H_U)\otimes\ell^\infty(\G)$, we put $b = (Q_U^{-1}\otimes 1)b_0(Q_U^{-1}\otimes 1)$ and
  we will show that Definition~\ref{Def C*faithful} holds for $\Sigma(b)/\|b\|$, with $k = \dim U$ and $\eta = 0$. Here
  $\Sigma : B(H_U)\otimes B \to B\otimes B(H_U)$ is the flip map.

  Let us give a more explicit expression of $b_0$. Recall that
  $\beta = \Delta_{|B}$ and that $(p_x\otimes p_y)\Delta(p_z)$ identifies in
  $B(H_x\otimes H_y)$ with the projection onto the $z$-homogeneous subspace,
  which in the case of $\F U_F$ has multiplicity $0$ or $1$. Now, let $s\in
  I$ be such that $(1\otimes p_s)b_0 \neq 0$. Equivalently, $U\otimes
  s$ contains a corepresentation $z$ starting with $\bar
  u$. According to the fusion rules~(\ref{eq_fusion_rules}) this is only
  possible if $s = Uz = U\otimes z$ --- observe indeed that $U$ starts with
  $u$ and ends with $\bar u$. Then $(1\otimes
  p_s)b_0$ corresponds to the projection $P(z,U\otimes Uz) = P(1,U\otimes
  U)\otimes p_z$ onto $t_U(1)\otimes H_z$. As a result $b_0 = b_0 (1\otimes
  \pi_{U\bar u}) = P(1,U\otimes U)\otimes \pi_{\bar
    u}$, where we identify $B(H_U)\otimes \pi_{\bar
    u}\ell^\infty(\G)$ with $\pi_{U\bar
    u}\ell^\infty(\G)$ via the isomorphisms $H_U\otimes H_y \simeq
  H_{Uy}$, for $y\in I$ starting with $\bar u$.

  Now we compute $(1\otimes p_x\otimes p_s)(\id\otimes\beta)(b_0)$, for $x\in F$, $s = Uy$,
  $y$ starting with $\bar u$. This element is non-zero if and only if
  $U\otimes x\otimes s = U\otimes x\otimes U\otimes y$ contains an irreducible
  corepresentation $z$ starting with $\bar u$. By irreducibility, there exists
  then a subobject $t\subset U\otimes x\otimes U$ such that
  $z\subset t\otimes y$. If the word $t$ is non-empty, by construction of $U$ it
  starts with $u$ and ends with $\bar u$. Since $y$ starts with $\bar u$ we have
  then $t\otimes y = ty$ and $z = ty$, which is impossible because $z$ starts
  with $\bar u$. Hence $t = 1$ and $z = y$ (and $x$ has to be itself a power of
  $u\bar u$). As a result
  $(1\otimes p_x\otimes \pi_{U\bar u})(\id\otimes\beta)(b_0) = P(1,U\otimes x\otimes U)\otimes \pi_{\bar
    u} \in B(H_U)\otimes B(H_x)\otimes B(H_U)\otimes \pi_{\bar
    u}\ell^\infty(\G)$, where we identify again the last two tensor factors with
  $\pi_{U\bar u}\ell^\infty(\G)$.

  Let us now compute the product
  \begin{displaymath}
    (1\otimes p_x\otimes 1) b_{13}(\id\otimes\beta)(b) =
    (Q_U^{-1}\otimes 1\otimes 1)b_{0,13}
    (Q_U^{-2}\otimes p_x\otimes \pi_{U\bar u})(\id\otimes\beta)(b_0)(Q_U^{-1}\otimes 1\otimes 1),
  \end{displaymath}
  for $x\in F$. Using the same identification as above we have
  \begin{displaymath}
    b_{0,13} (Q_U^{-2}\otimes p_x\otimes \pi_{U\bar u})(\id\otimes\beta)(b_0) =
    [P(1,U\otimes U)_{13}(Q_U^{-2}\otimes 1\otimes 1)P(1,U\otimes x\otimes U)]\otimes \pi_{\bar u}.
  \end{displaymath}
  Note that $P(1,U\otimes U)$ is a positive multiple of $t_U t_U^*$. On the
  other hand, by Frobenius reciprocity morphisms from $1$ to $U\otimes x\otimes U$ can be written
  $(V\otimes\id)t_U$, where $V : U\to U\otimes x$ is an intertwiner. According
  to the fusion rules, such an intertwiner is unique up to a scalar, so that
  $P(1,U\otimes x\otimes U)$ is a positive multiple of
  $(V\otimes\id)t_U t_U^*(V^*\otimes\id)$. Now we observe that for $\zeta\in H_x$ we have
  \begin{align*}
    \zeta^*t_{U,13}^*(Q_U^{-2}\otimes 1\otimes 1)(V\otimes\id)t_U
    &= t_U^*(Q_U^{-2}\otimes 1)((1\otimes\zeta^*)V\otimes\id)t_U \\
    &= \qTr_U(Q_U^{-2}(1\otimes\zeta^*)V) = \qTr'_U((1\otimes\zeta^*)V) \\
    &= s_U^*(1\otimes (1\otimes\zeta^*)V) s_U = \zeta^* (s_U^*\otimes 1)(1\otimes V)s_U,
  \end{align*}
  using the left and right quantum traces from the Preliminaries. But
  $(s_U^*\otimes 1)(1\otimes V)s_U$ is an intertwiner from $1$ to $x$, and since
  $1\notin F$ it must vanish. As a result we have
  $b_{0,13} (Q_U^{-2}\otimes p_x\otimes \pi_{U\bar u})(\id\otimes\beta)(b_0) = 0$ and
  $(1\otimes p_x\otimes 1) b_{13}(\id\otimes\beta)(b) = 0$. Since this holds for
  any $x\in F$ this shows that $\Sigma(b)/\|b\|$ fulfills the conditions in
  Definition~\ref{Def C*faithful} with $\eta = 0$ and $M_k(\C)\simeq B(H_U)$.
\end{proof}

We come back to the general case. The following reformulation of (strong) $C^*$-faithfulness is crucial for the application to $C^*$-simplicity.

\begin{prop}\label{Prop Charact C*faithul}
  The action $\G\acts A$ is $C^*$-faithful (resp.\ strongly $C^*$-faithful) {\bf iff} for every $\eta > 0$ and every minimal (resp.\ finite rank) central
  projection $p\in c_c(\G)$ with $\epsilon(p)=0$ there exists $k\in\N^*$ and $b\in (A\otimes M_k(\C))_+$ such that $\|b\| = 1$ and
  $\|p\otimes b + (p\otimes 1)(\alpha\otimes\id)(b)\|\leq 1+\eta$.
\end{prop}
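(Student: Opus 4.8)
The plan is to reduce the statement to a purely $C^*$-algebraic equivalence about a pair of positive contractions, and then prove two operator inequalities. Fix a (minimal, resp.\ finite rank) central projection $p\in c_c(\G)$ with $\epsilon(p)=0$, and for $b\in(A\otimes M_k(\C))_+$ with $\|b\|=1$ write $x=p\otimes b$ and $y=(p\otimes 1)(\alpha\otimes\id)(b)$, the two elements appearing in the statement. First I would record the elementary features of this pair: since $\alpha\otimes\id$ is a $*$-homomorphism and $p$ is a nonzero central projection, $p\otimes 1$ is central in $M(c_0(\G)\otimes A\otimes M_k(\C))$ and hence commutes with $(\alpha\otimes\id)(b)$; consequently $x$ and $y$ are both positive, with $\|x\|=\|b\|=1$ and $\|y\|\le\|b\|=1$ (the commutation is exactly what makes $y$ positive). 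With this notation, $C^*$-faithfulness for $p$ asks that $\inf_b\|xy\|=0$, while the asserted condition asks that $\inf_b\|x+y\|\le 1$ (note $\|x+y\|\ge\|x\|=1$ always). So it suffices to prove, uniformly over such pairs, the equivalence of these two infimum conditions; the argument uses only that $p$ is a nonzero central projection, so it covers both variants at once.

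For the forward direction I would invoke the inequality $\|x+y\|\le 1+2\|xy\|$, valid for any positive contractions $x,y$. It follows from $(x+y)^2=x^2+y^2+xy+yx\le x+y+2\|xy\|\cdot 1$, using $x^2\le x$, $y^2\le y$ and that the self-adjoint element $xy+yx$ is dominated by its norm, together with the spectral mapping theorem: every $t\in\Sp(x+y)$ then satisfies $t^2-t-2\|xy\|\le 0$, so $\|x+y\|=\max\Sp(x+y)\le\tfrac12\big(1+\sqrt{1+8\|xy\|}\,\big)\le 1+2\|xy\|$. Hence, given $\eta>0$, applying $C^*$-faithfulness with $\eta/2$ produces $b$ with $\|xy\|\le\eta/2$, and the same $b$ and $k$ satisfy $\|x+y\|\le 1+\eta$.

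The reverse direction is the delicate one, and is where I expect the main difficulty: a bound $\|x+y\|\le 1+\eta$ does \emph{not} control $\|xy\|$ for the same $b$, because the part of the spectrum of $x$ near $1/2$ can keep the product large even when the constraint only forces $y$ to be small at the top of the spectrum of $x$. The remedy is to modify $b$ by functional calculus so as to concentrate it where $\|x\|$ is attained, which is legitimate precisely because $\alpha\otimes\id$ is a homomorphism commuting with $p\otimes 1$. Given the target $\eta'>0$, I would choose $\epsilon\in(0,1)$ and $\eta>0$ with $(\eta+\epsilon)/(1-\epsilon)\le(\eta')^2$, apply the hypothesis to obtain $b$ with $\|x+y\|\le 1+\eta$, and set $b'=f(b)$ where $f$ is continuous, $0\le f\le 1$, $f\equiv 0$ off $[1-\epsilon,1]$ and $f(1)=1$. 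Then $b'\in(A\otimes M_k(\C))_+$ and, since $1\in\Sp(b)$, $\|b'\|=\sup_{\Sp(b)}f=1$; and since $f(0)=0$ the associated elements are $x'=f(x)$ and $y'=f(y)$, again positive contractions.

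To finish, from $x+y\le(1+\eta)1$ one has $y\le(1+\eta)1-x$, and using the scalar inequalities $f(t)\le t/(1-\epsilon)$ and $tf(t)\ge(1-\epsilon)f(t)$ on $[0,1]$ I would estimate
\[
  x'^{1/2}y'x'^{1/2}\le\tfrac{1}{1-\epsilon}\,x'^{1/2}\big((1+\eta)1-x\big)x'^{1/2}
  =\tfrac{1}{1-\epsilon}\big((1+\eta)x'-xf(x)\big)\le\tfrac{\eta+\epsilon}{1-\epsilon}\,x',
\]
where the last step uses $xf(x)\ge(1-\epsilon)x'$. Since $x'\le 1$ this gives $\|x'^{1/2}y'x'^{1/2}\|\le(\eta+\epsilon)/(1-\epsilon)$, and then $y'^2\le y'$ together with $x'\le 1$ yields $\|x'y'\|^2\le\|x'y'x'\|\le\|x'^{1/2}y'x'^{1/2}\|$, whence $\|x'y'\|\le\sqrt{(\eta+\epsilon)/(1-\epsilon)}\le\eta'$. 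This exhibits $b'$ witnessing $C^*$-faithfulness (for $\eta'$), and the same computation applies verbatim in the strong case, the only change being the admissible class of projections $p$.
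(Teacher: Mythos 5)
Your proof is correct. The reduction is the same as the paper's: setting $A=p\otimes b$ and $B=(p\otimes 1)(\alpha\otimes\id)(b)$, everything comes down to relating $\|AB\|$ and $\|A+B\|$ for the resulting pair of positive contractions, and your easy direction ($\|A+B\|\le 1+2\|AB\|$ via $A^2+B^2\le A+B$) is essentially the paper's Lemma~\ref{Easy orthogonality lemma}, proved with a slightly different (spectral-mapping) finish. The hard direction is where you genuinely diverge. The paper replaces $b$ by $b^n$ and invokes Lemma~\ref{Asymptotic Orthogonality Lemma}, a Hilbert-space argument with unit vectors and spectral projections yielding $\|A^nB^n\|\le 14\epsilon$ for an $n$ depending only on $\epsilon$; you instead replace $b$ by $f(b)$ for a bump function $f$ supported on $[1-\epsilon,1]$ and run a short chain of operator inequalities ($y'\le\frac{1}{1-\epsilon}((1+\eta)1-x)$, conjugate by $x'^{1/2}$, use $xf(x)\ge(1-\epsilon)f(x)$, then $\|x'y'\|^2\le\|x'y'x'\|\le\|x'^{1/2}y'x'^{1/2}\|$). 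Both implement the same idea of concentrating $b$ near the top of its spectrum — which is legitimate in either form because $f(0)=0$ (resp.\ $b\mapsto b^n$) commutes with the homomorphism $\alpha\otimes\id$ and with cutting by the central projection $p\otimes 1$ — but your version stays entirely inside the $C^*$-algebra, avoids the vector-state bookkeeping, and gives the explicit quantitative bound $\|x'y'\|\le\sqrt{(\eta+\epsilon)/(1-\epsilon)}$ in place of the constant $14$. One small point worth making explicit if you write this up: $f(p\otimes b)=p\otimes f(b)$ and $f\bigl((p\otimes 1)(\alpha\otimes\id)(b)\bigr)=(p\otimes 1)(\alpha\otimes\id)(f(b))$ do require $f(0)=0$, which your choice of $f$ guarantees; with that noted, the argument is complete and covers both the minimal and finite-rank cases uniformly, as you say.
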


The proposition follows immediately from the following two lemmas, which are probably known to experts, by taking $A = p\otimes b$ and
$B = (p\otimes 1)(\alpha\otimes\id)(b) \in c_0(\G)\otimes A \otimes M_k(\C)$.

\begin{lem}\label{Asymptotic Orthogonality Lemma}
  Fix $\epsilon \in \left]0,\frac12\right]$. Then there exists $n\in\N$ such that, for any positive operators $A$, $B\in B(H)$ satisfying $\|A\|\leq 1$,
  $\|B\|\leq 1$ and $\|A+B\|\leq 1+\epsilon$, we have $\|A^n B^n\| \leq 14\epsilon$.
\end{lem}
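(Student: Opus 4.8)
The plan is to exploit an \emph{asymptotic orthogonality} phenomenon: high powers $A^n$, $B^n$ essentially compress onto the spectral subspaces where $A$, resp.\ $B$, is close to $1$, and the hypothesis $\|A+B\|\le1+\epsilon$ forces these two subspaces to be nearly orthogonal. Concretely, I would fix a threshold $\tau=1-\delta\in(0,1)$ (choosing $\delta=\epsilon$ only at the end) and let $P$, $Q$ be the spectral projections of $A$, $B$ for the interval $(\tau,1]$, with $P^\perp=1-P$, $Q^\perp=1-Q$. Since $A\le\tau$ on the range of $P^\perp$ and $A$, $B$ are positive contractions, one has $\|A^nP^\perp\|\le\tau^n$ and $\|Q^\perp B^n\|\le\tau^n$. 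Inserting $1=(P+P^\perp)(Q+Q^\perp)$ between $A^n$ and $B^n$ and expanding produces four terms; the three containing a factor $A^nP^\perp$ or $Q^\perp B^n$ are each bounded by $\tau^n$ (up to grouping, using $\tau^{2n}\le\tau^n$), while the remaining term $A^n(PQ)B^n$ is bounded by $\|PQ\|$. This gives
\begin{displaymath}
  \|A^nB^n\|\le\|PQ\|+3\tau^n.
\end{displaymath}

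The crux is to bound $\|PQ\|$ \emph{linearly} in $\epsilon$, and this is where the naive estimates fail. For instance, bounding $\|A^nB^n\|^2\le\|A^nBA^n\|$ through $B^{2n}\le B$ and $B\le(1+\epsilon)-A$ only yields $\|A^nB^n\|\lesssim\sqrt\epsilon$, and symmetrically $\|PQP\|\le(\epsilon+\delta)/\tau$ gives $\|PQ\|\lesssim\sqrt\epsilon$; neither reaches the asserted linear bound. The gain comes instead from the elementary identity $\|P+Q\|=1+\|PQ\|$ for two projections (the extra $+1$ is precisely what defeats the square root). Since $\tau P\le A$ and $\tau Q\le B$ as positive operators, we obtain $P+Q\le\tau^{-1}(A+B)\le\tau^{-1}(1+\epsilon)\,1$, whence
\begin{displaymath}
  \|PQ\|=\|P+Q\|-1\le\frac{1+\epsilon}{\tau}-1=\frac{\epsilon+\delta}{1-\delta}.
\end{displaymath}

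To conclude I would optimize the parameters. Taking $\delta=\epsilon$ and using $\epsilon\le\tfrac12$ gives $\|PQ\|\le\frac{2\epsilon}{1-\epsilon}\le4\epsilon$, so $\|A^nB^n\|\le4\epsilon+3(1-\epsilon)^n$. As $(1-\epsilon)^n\to0$ for fixed $\epsilon$, one picks $n$ depending only on $\epsilon$ with $3(1-\epsilon)^n\le10\epsilon$, which yields $\|A^nB^n\|\le14\epsilon$; the constant $14=4+10$ falls out of this split. The main obstacle is exactly the step $\|PQ\|=\|P+Q\|-1$: one must realize that only the projection-sum identity, combined with the operator domination $\tau P\le A$ and $\tau Q\le B$, produces a bound linear in $\epsilon$, whereas any direct Cauchy--Schwarz or $C^*$-manipulation loses a square root. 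The identity is standard, but if a self-contained argument is wanted, the inequality $\|P+Q\|\ge1+\|PQ\|$ can be verified by testing $P+Q$ against $w=(\eta+\xi)/\|\eta+\xi\|$, where $\xi$ is a unit vector in the range of $Q$ nearly realizing $\|PQ\|=\sup_{\xi\in\operatorname{ran}Q,\,\|\xi\|=1}\|P\xi\|$ and $\eta=P\xi/\|P\xi\|$.
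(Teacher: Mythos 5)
Your proof is correct, and it reaches the bound by a genuinely different route from the paper's. Both arguments share the same skeleton: introduce spectral projections $P$, $Q$ of $A$, $B$ near the top of the spectrum, observe that $\|A^nP^\perp\|$ and $\|Q^\perp B^n\|$ decay geometrically, split $A^nB^n$ accordingly, and reduce everything to a bound on $\|PQ\|$ that is \emph{linear} in $\epsilon$. The difference lies entirely in how that last bound is obtained. The paper works directly with near-eigenvectors: for unit vectors $\zeta$, $\xi$ with $\|A\zeta\|^2,\|B\xi\|^2\geq 1-\epsilon^2$ it tests $A+B$ against $\zeta+\xi$, combines half a dozen elementary inequalities, and extracts $|(\zeta\mid\xi)|\leq 12\epsilon$, hence $\|PQ\|\leq 12\epsilon$. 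You instead pass through the exact identity $\|P+Q\|=1+\|PQ\|$ for projections, combined with the operator domination $\tau(P+Q)\leq A+B\leq(1+\epsilon)1$, which gives $\|PQ\|\leq(\epsilon+\delta)/(1-\delta)\leq 4\epsilon$ for $\delta=\epsilon\leq\tfrac12$ in one line. This is cleaner and more conceptual; the paper's version is more self-contained (only Cauchy--Schwarz-type manipulations, no appeal to the two-projections identity), and in fact its Step~2 is morally a hands-on proof of the inequality $\|P+Q\|\geq 1+\|PQ\|$ in disguise, since it also tests against a sum of two near-eigenvectors. One small remark on your fallback verification of $\|P+Q\|\geq 1+\|PQ\|$: testing against $w=(\eta+\xi)/\|\eta+\xi\|$ with $c=\|P\xi\|$ gives $\langle Pw,w\rangle\|w\|^2=(1+c)^2$ and $\langle Qw,w\rangle\|w\|^2=\|Q\eta\|^2+2c+1$, and to reach $1+c$ you need the extra observation $\|Q\eta\|\geq|\langle Q\eta,\xi\rangle|=c$, so that the numerator is at least $2(1+c)^2$; without it the test vector falls just short. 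Since you only invoke the identity as a standard fact (which it is), this does not affect the validity of the argument.
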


\begin{proof}[Proof]
  The assumption can also be written $0\leq A, B\leq 1$, $0\leq A+B \leq 1+\epsilon$.

  Step 1 (well-known). If $\zeta\in H$ satisfies $\|\zeta\| = 1$ and $\|A\zeta\|^2 \geq 1-\epsilon^2$, then $\|A\zeta-\zeta\|\leq\epsilon$.  Indeed, writing
  $\zeta = \zeta-A\zeta+A\zeta$ and noting that $A-A^2 \geq 0$ we can write
  \begin{displaymath}
    \epsilon^2 \geq \|\zeta\|^2 - \|A\zeta\|^2 =
    \|\zeta-A\zeta\|^2 +2\Re (\zeta-A\zeta \mid A\zeta) \geq\|\zeta-A\zeta\|^2.
  \end{displaymath}
  This implies also $(\zeta\mid A\zeta) \geq 1-\epsilon$.

  Step 2. Fix $\zeta$ as above and $\xi\in H$ such that $\|\xi\| = 1$, $\|B\xi\|^2\geq 1-\epsilon^2$. We have the following inequalities:
  $(\zeta\mid A\zeta)\geq 1-\epsilon$, $(\xi\mid B\xi)\geq 1-\epsilon$, $(\zeta\mid B\zeta)\geq 0$, $(\xi\mid A\xi)\geq 0$. Also,
  \begin{displaymath}
    (\zeta\mid A\xi) + (\xi\mid A\zeta)=
    2\Re(\xi\mid A\zeta) \geq  2\Re(\xi\mid\zeta)
    - 2\|\xi\|\|\zeta-A\zeta\| \geq 2\Re(\zeta\mid\xi)-2\epsilon.
  \end{displaymath}
  Similarly $(\xi\mid B\zeta) + (\zeta\mid B\xi) \geq 2\Re (\zeta\mid\xi)-2\epsilon$. Summing all these inequalities we get
  \begin{displaymath}
    (\zeta+\xi\mid (A+B)(\zeta+\xi)) \geq 2 + 4\Re(\zeta\mid\xi) - 6\epsilon.
  \end{displaymath}
  But on the other hand we have $(\zeta+\xi\mid (A+B)(\zeta+\xi)) \leq \|\zeta+\xi\|^2 \|A+B\| \leq (2+2\Re(\zeta\mid\xi))(1+\epsilon)$. Subtracting we obtain
  $\Re(\zeta\mid\xi) \leq 4\epsilon/(1-\epsilon) \leq 8\epsilon$, if $\epsilon\leq\frac 12$. We can apply this to $-\xi$ and we thus get
  $|\Re(\zeta\mid\xi)|\leq 8\epsilon$. We can also apply this to $i\xi$ and we get $|\Im(\zeta\mid\xi)|\leq 8\epsilon$, so that finally
  $|(\zeta\mid\xi)|\leq 8\sqrt 2\epsilon\leq 12\epsilon$.

  This can be reformulated as follows: denote $P$, $Q$ the spectral projections of $A$, $B$ for the interval $\mathopen[\sqrt{1-\epsilon^2},1\mathclose]$. Then for any $\zeta$, $\xi\in H$ such that $P\zeta\neq 0$, $Q\xi\neq 0$ we can apply the previous result to $\zeta' = P\zeta / \|P\zeta\|$, $\xi' = Q\xi / \|Q\xi\|$ and we get
  \begin{displaymath}
    |(P\zeta \mid Q\xi)| \leq 12\epsilon \|P\zeta\| \|Q\xi\| \leq 12\epsilon \|\zeta\| \|\xi\|.
  \end{displaymath}
  This holds as well if $P\zeta = 0$ or $Q\xi = 0$, so that we have proved $\|PQ\|\leq 12\epsilon$.

  Step 3. Take $n\in\N$ such that $(1-\epsilon^2)^n \leq \epsilon^2$. For any unit vectors $\zeta$, $\xi\in H$ we have then
  $\|A^n(1-P)\zeta\|\leq (1-\epsilon^2)^{n/2} \|(1-P)\zeta\| \leq \epsilon$ and similarly $\|B^n(1-Q)\xi\|\leq \epsilon$. We can then write
  \begin{align*}
    |(A^n\zeta \mid B^n\xi)| &\leq |(A^n(1-P)\zeta \mid B^n\xi)|
                               + |(A^nP\zeta \mid B^n(1-Q)\xi)| + |(A^nP\zeta \mid B^nQ\xi)| \\
                             &\leq \epsilon + \epsilon + |(A^n\zeta \mid PQB^n\xi)| \leq 14\epsilon.
  \end{align*}
  This proves that $\|A^n B^n\|\leq 14\epsilon$.
\end{proof}

The second lemma is much easier and certainly well known as well.

\begin{lem}\label{Easy orthogonality lemma}
  Fix $\epsilon\in \left[0,+\infty\right[$.  Then for any positive operators $A$, $B\in B(H)$ such that $\|A\|=\|B\|=1$ and $\|AB\|\leq\epsilon$, we have
  $\|A+B\|\leq 1+2\epsilon$.
\end{lem}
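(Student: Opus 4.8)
The plan is to exploit that $A+B$ is positive, so that $\|A+B\|^2 = \|(A+B)^2\|$, and to control $(A+B)^2$ from above by a self-adjoint operator whose norm is only slightly larger than $\|A+B\|$. The starting observation is that $\|A\| = \|B\| = 1$ forces $0 \le A \le 1$ and $0 \le B \le 1$, whence the operator inequalities $A^2 \le A$ and $B^2 \le B$ (directly, $A - A^2 = A^{1/2}(1-A)A^{1/2} \ge 0$, and likewise for $B$). First I would record these two facts, as they are what allows the cross terms to be isolated.

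Expanding the square, I would then write
\begin{displaymath}
  (A+B)^2 = A^2 + B^2 + AB + BA \le (A+B) + (AB + BA),
\end{displaymath}
using $A^2 + B^2 \le A + B$. The right-hand side is self-adjoint but \emph{not} positive, so the step I expect to require the most care is passing from this operator inequality to a norm estimate. I would handle it by the further bound $AB + BA \le \|AB+BA\|\,1$, giving $0 \le (A+B)^2 \le (A+B) + \|AB+BA\|\,1$; since both extremes are now positive and the latter has norm $\|A+B\| + \|AB+BA\|$, this yields $\|(A+B)^2\| \le \|A+B\| + \|AB+BA\|$. Finally $\|AB+BA\| \le \|AB\| + \|BA\| = 2\|AB\| \le 2\epsilon$, where $\|BA\| = \|(AB)^*\| = \|AB\|$ because $A$, $B$ are self-adjoint.

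Writing $t = \|A+B\|$ and combining with $\|(A+B)^2\| = t^2$, the above gives the scalar inequality $t^2 \le t + 2\epsilon$, i.e.\ $t^2 - t - 2\epsilon \le 0$, hence $t \le \tfrac12\bigl(1 + \sqrt{1+8\epsilon}\bigr)$. It then remains to verify the elementary estimate $\tfrac12\bigl(1+\sqrt{1+8\epsilon}\bigr) \le 1 + 2\epsilon$, which is equivalent to $\sqrt{1+8\epsilon} \le 1 + 4\epsilon$ and, after squaring, to $0 \le 16\epsilon^2$; this closes the argument and proves $\|A+B\| \le 1 + 2\epsilon$. The only genuinely delicate point is the norm-from-operator-inequality step, where one must not take norms of a non-positive right-hand side directly; everything else is a routine expansion and a one-line quadratic estimate. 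This approach also makes transparent why the constant $2$ appears rather than the sharp value $1$ valid for projections, since we discarded the self-adjoint (but possibly indefinite) part $AB+BA$ by passing to its norm.
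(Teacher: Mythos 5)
Your proof is correct and follows essentially the same route as the paper: expand $(A+B)^2$, use $0\le A,B\le 1$ to get $A^2+B^2\le A+B$, and bound the cross terms by $2\|AB\|\le 2\epsilon$. The only (immaterial) difference is in the endgame — the paper notes $\|A+B\|\ge 1$ to write $\|A+B\|^2\le(1+2\epsilon)\|A+B\|$ and divides, while you solve the quadratic $t^2\le t+2\epsilon$ directly; also, the paper simply invokes the triangle inequality for the cross terms, so the operator-inequality detour you flag as delicate can be avoided.
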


\begin{proof}
  Since $A+B\geq A$ the assumption entails $\|A+B\|\geq 1$. Also, $0\leq A$, $B\leq 1$ implies $0\leq A^2+B^2 \leq A+B$. We can thus write
  \begin{align*}
    \|A+B\|^2 &= \|(A+B)^2\| = \|A^2+B^2+AB+BA\| \\ &\leq \|A^2+B^2\| + 2\epsilon
                                                      \leq \|A+B\|+2\epsilon\|A+B\|
  \end{align*}
  and the result follows.
\end{proof}

The following is a direct but important consequence of the previous reformulation of strong $C^*$-faithfulness, which does not rely on the multiplicative structure of the underlying algebra.

\begin{cor}\label{Cor C*faithful universal}
  If $\Phi : A \to B$ is $\G$-equivariant, unital and completely isometric, then (strong) $C^*$-faithfulness passes from $A$ to $B$. In particular, if a
  $\G$-boundary $A$ is (strongly) $C^*$-faithful, then so is the universal Furstenberg boundary $C(\partial_F\G)$.
\end{cor}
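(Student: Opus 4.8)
The plan is to exploit the reformulation of (strong) $C^*$-faithfulness provided by Proposition~\ref{Prop Charact C*faithul}, and this is precisely why that reformulation is ``crucial'': the original Definition~\ref{Def C*faithful} controls a \emph{product} $\|(p\otimes b)(\alpha\otimes\id)(b)\|$, and a ucp map is not multiplicative, so it cannot be transported through such an expression; by contrast the reformulated estimate $\|p\otimes b+(p\otimes 1)(\alpha\otimes\id)(b)\|\leq 1+\eta$ is a \emph{linear} combination, which any contractive linear map carries directly. So I would first record that a unital completely isometric $\Phi$ is automatically ucp (unital together with completely contractive forces complete positivity), so that $\Phi\otimes\id_{M_k(\C)}$ is positive and, being completely isometric, isometric, while $\id_{c_0(\G)}\otimes\Phi\otimes\id_{M_k(\C)}$ is ucp, hence contractive.

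Fix $\eta>0$ and a minimal (resp.\ finite rank) central projection $p\in c_c(\G)$ with $\epsilon(p)=0$. Applying Proposition~\ref{Prop Charact C*faithul} to the $C^*$-faithful (resp.\ strongly $C^*$-faithful) algebra $A$ produces $k\in\N$ and $b\in(A\otimes M_k(\C))_+$ with $\|b\|=1$ and $\|p\otimes b+(p\otimes 1)(\alpha\otimes\id)(b)\|\leq 1+\eta$. I would then set $c=(\Phi\otimes\id_{M_k(\C)})(b)\in(B\otimes M_k(\C))_+$; since $\Phi$ is completely isometric we get $\|c\|=\|b\|=1$, so $c$ is a legitimate witness for $B$. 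It then remains to estimate $\|p\otimes c+(p\otimes 1)(\beta\otimes\id)(c)\|$, where $\beta$ denotes the coaction on $B$.

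The key step is to show that the contractive map $T=\id_{c_0(\G)}\otimes\Phi\otimes\id_{M_k(\C)}$ sends the $A$-witness to the $B$-witness. On the one hand $T(p\otimes b)=p\otimes c$. On the other hand, writing $b=\sum_j a_j\otimes m_j$ and using $\G$-equivariance of $\Phi$ in the form $(\id\otimes\Phi)\alpha=\beta\Phi$, one computes $T((\alpha\otimes\id)(b))=\sum_j\beta(\Phi(a_j))\otimes m_j=(\beta\otimes\id)(c)$; and since $p\otimes 1$ lies in the bimodule $c_0(\G)\otimes 1\otimes M_k(\C)$ over which $T$ is a bimodule map, the projection can be pulled out, giving $T((p\otimes 1)(\alpha\otimes\id)(b))=(p\otimes 1)(\beta\otimes\id)(c)$. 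Summing the two contributions and using that $T$ is contractive yields $\|p\otimes c+(p\otimes 1)(\beta\otimes\id)(c)\|\leq\|p\otimes b+(p\otimes 1)(\alpha\otimes\id)(b)\|\leq 1+\eta$, whence Proposition~\ref{Prop Charact C*faithul} gives (strong) $C^*$-faithfulness of $B$.

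I expect the only delicate points to be bookkeeping rather than real obstacles: verifying the bimodule identity that lets $p\otimes 1$ pass through $T$, and confirming that it is complete isometry (not merely the ucp property) that secures the normalization $\|c\|=1$, since a unital completely positive map would only give $\|c\|\leq 1$ and could collapse the witness. For the final assertion I would simply invoke the universal property of the Furstenberg boundary recalled in the preliminaries: when $A$ is a $\G$-boundary there is a $\G$-equivariant, unital, completely isometric embedding $A\hookrightarrow C(\partial_F\G)$, and applying the first part to this embedding transfers (strong) $C^*$-faithfulness from $A$ to $C(\partial_F\G)$.
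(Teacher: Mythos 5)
Your proof is correct and is exactly the argument the paper intends: the corollary is stated there as a "direct consequence" of Proposition~\ref{Prop Charact C*faithul} with no written proof, and your route --- push the witness $b$ through $\Phi\otimes\id_{M_k(\C)}$, use that a unital complete isometry is ucp so positivity and the normalization $\|c\|=1$ are preserved, use equivariance and the bimodule property of $\id\otimes\Phi\otimes\id$ over $c_0(\G)\otimes 1\otimes M_k(\C)$ to transport the linear estimate, then apply the completely isometric equivariant embedding of a $\G$-boundary into $C(\partial_F\G)$ --- is precisely the intended one. Your remark explaining \emph{why} the reformulation is needed (the original definition involves a product, which a ucp map does not control) correctly identifies the point of Proposition~\ref{Prop Charact C*faithul}.
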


\begin{thm}\label{Strong Faithfulness Implies Faithful Expectations}
  Let $A$ be a $C^*$-algebra with strongly $C^*$-faithful action of $\G$, and let $E : A\rtimes_r\G \to A$ be a conditional expectation. Then the restriction
  of $E$ to $\Cred\G$ factors $h$.
\end{thm}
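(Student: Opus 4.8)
The plan is to verify the criterion of Lemma~\ref{Weak* Approximately Equal Lemma}: writing $\Psi = E|_{\Cred\G} : \Cred\G \to A$, it suffices to show that for every $x \in \C[\G]$ one has $h(x) \in \overline{\{\mu(\Psi(x)) : \mu \in S(A)\}}$. Decomposing $x = h(x)1 + x_0$, where $x_0 \in \C[\G] \cap \ker h$ is a finite linear combination of matrix coefficients $u_{\zeta,\xi}$ of nontrivial irreducibles $u$ ranging over some finite set $F \subseteq \Irr(\G) \setminus \{1\}$, and using that $E$ and every state are unital (so $\mu(E(h(x)1)) = h(x)$), the statement reduces by linearity to the following claim: for every finite $F \subseteq \Irr(\G)\setminus\{1\}$ and every $\eta > 0$ there is a single state $\mu \in S(A)$ with $|\mu(E(u_{\zeta,\xi}))| \le \eta$ for all $u \in F$ and all unit vectors $\zeta, \xi \in H_u$. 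Indeed, applying this with $F$ the irreducibles occurring in a fixed $x$ and letting $\eta\to 0$ puts $h(x)$ in the required closure.

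To produce $\mu$ I would feed $p = \sum_{u \in F} p_u$ and $\eta$ into strong $C^*$-faithfulness (Definition~\ref{Def C*faithful}), obtaining $k$ and $b \in (A \otimes M_k(\C))_+$ with $\|b\| = 1$ and $\|(p \otimes b)(\alpha\otimes\id)(b)\| \le \eta$; up to replacing $A$, $\alpha$, $E$ by $A \otimes M_k(\C)$, $\alpha\otimes\id$, $E \otimes\id$ (a state on the amplification restricts to one on $A$, compatibly with $E$ and $h$) I may assume $b \in A_+$ and $\|(p\otimes b)\alpha(b)\| \le \eta$. The heart of the argument is the norm estimate for $T := \alpha(b)(u_{\zeta,\xi}\otimes 1)\alpha(b) \in A \rtimes_r \G$. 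Writing $u_{\zeta,\xi} = (\xi^*\otimes 1)u(\zeta\otimes 1)$ and using that the embedding $\alpha : A \to A\rtimes_r\G$ is equivariant --- which yields the commutation $(1\otimes\alpha(a))u = u\,(\id\otimes\alpha)(\alpha_u(a))$ in $B(H_u)\otimes(A\rtimes_r\G)$ --- together with the fact that $\alpha_u = (p_u\otimes\id)\alpha$ is a unital $*$-homomorphism (because $p_u$ is central), I would rewrite $(1\otimes\alpha(b))u(1\otimes\alpha(b)) = u\,(\id\otimes\alpha)\bigl(\alpha_u(b)(1\otimes b)\bigr)$ and conclude $\|T\| \le \|\alpha_u(b)(1\otimes b)\|$. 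Since $\alpha_u(b)$ and $1\otimes b$ are positive, this norm equals $\|(1\otimes b)\alpha_u(b)\|$, which is exactly the $u$-block of $\|(p\otimes b)\alpha(b)\| \le \eta$; hence $\|T\| \le \eta$.

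With this in hand, the bimodule property $E(\alpha(a)Y\alpha(a')) = a\,E(Y)\,a'$ of the conditional expectation gives $E(T) = b\,E(u_{\zeta,\xi})\,b$, so that $\|b\,E(u_{\zeta,\xi})\,b\| = \|E(T)\| \le \|T\| \le \eta$. Finally I would pick a state $\rho$ with $\rho(b) = \|b\| = 1$; since $0 \le b \le 1$ gives $(1-b)^2 \le 1-b$, the Cauchy--Schwarz inequality for $\rho$ forces $\rho((1-b)Y) = \rho(Y(1-b)) = 0$ for all $Y$, whence $\rho(bYb) = \rho(Y)$. Taking $Y = E(u_{\zeta,\xi})$ gives $|\rho(E(u_{\zeta,\xi}))| = |\rho(b\,E(u_{\zeta,\xi})\,b)| \le \eta$, and the restriction of $\rho$ to $A$ is the desired $\mu$, uniformly over $u \in F$ and unit $\zeta, \xi$. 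I expect the only delicate point to be the norm estimate in the second paragraph: one must exploit the corepresentation/coaction interplay so that the asymptotic orthogonality of $b$ against its translate --- the content of strong $C^*$-faithfulness --- genuinely controls the conjugated coefficient $\alpha(b)(u_{\zeta,\xi}\otimes 1)\alpha(b)$, and it is the matrix amplification $M_k(\C)$ together with the centrality of $p_u$ (making $\alpha_u$ multiplicative) that make this go through cleanly.
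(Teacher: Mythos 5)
Your proof is correct and follows essentially the same route as the paper's: both hinge on transporting the covariance relation $(1\otimes a)W = W\alpha(a)$ through the conditional expectation, invoking the strong $C^*$-faithfulness estimate $\|(p\otimes b)(\alpha\otimes\id)(b)\|\leq\eta$ for $p=\sum_{u\in F}p_u$, sandwiching with a state concentrated where $b$ attains its norm, and concluding via Lemma~\ref{Weak* Approximately Equal Lemma}. The only cosmetic differences are that you estimate $\alpha(b)(u_{\zeta,\xi}\otimes 1)\alpha(b)$ inside the crossed product before applying $E$ and use a Hahn--Banach state with $\rho(b)=1$, whereas the paper first forms $X=(\id\otimes\Psi)(W)$ and uses a vector state $\omega_{b\zeta}$.
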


\begin{proof}
  Let $A$ be faithfully represented on a Hilbert space $H$. Recall that we identify $A$ and $\Cred\G$ as $C^*$-subalgebras of $A\rtimes_r\G$, in such a way that
  $(1\otimes a)W = W\alpha(a)\in M(c_0(\G)\otimes A\rtimes_r\G)$ for any $a\in A$. Denote $\Psi : \Cred\G\to B(H)$ the restriction of $E$, and $X = (\id\otimes \Psi)(W) \in M(c_0(\G)\otimes A)$. Applying $\id\otimes E$ to the previous relation we obtain $(1\otimes a)X = X\alpha(a)$.

  Take $\eta>0$ and $p\in c_c(\G)$ a finite-rank central projection such that $\epsilon(p) = 0$. Let $k\in \N$, $b\in A\otimes M_k(\C)$ be the elements given by
  Definition~\ref{Def C*faithful} for $\eta$ and $p$.  Since $\|b\| = 1$ we can find $\zeta\in H\otimes \C^k$ such that $\|\zeta\| \leq \sqrt 2$ and $\|b\zeta\| = 1$. Put
  $\xi = b\zeta$. We have
  \begin{align*}
    (p\otimes\omega_\xi)(X\otimes 1) &= (\id\otimes\omega_\zeta)[(p\otimes b)(X\otimes 1)(p\otimes b)]
    \\ &= (\id\otimes\omega_\zeta)[(X\otimes 1)(\alpha\otimes\id)(b)(p\otimes b)]
  \end{align*}
  hence $\|(p\otimes\omega_\xi)(X\otimes 1)\|\leq 2\eta$.
    
  Denoting $\Psi^k : A\rtimes_r\G \to B(H \otimes\C^k)$ the amplification $\Psi^k(a) = \Psi(a)\otimes\id$, and $L(\mu) = (\mu\otimes\id)(W)\in \Cred\G$ for
  $\mu\in\ell^1(\G)$, this implies $|\omega_\xi(\Psi^k(L(\mu)))| \leq 2\eta\|\mu\|_1$ if $\mu$ is supported on $p$. Note that $h(L(\mu)) = \mu(p_0)$, where $p_0$
  is the support of $\epsilon$, and $L(\epsilon) = 1$. Hence if now $\mu$ is supported on $p_0+p$ we have
  $|(\omega_\xi\circ \Psi^k - h)(L(\mu))|\leq 2\eta\|\mu\|_1$.  Writing $\xi = \sum_{i=1}^k \xi_i\otimes e_i$ and denoting $\varphi\in S(A)$ the sum of the
  restrictions of the forms $\omega_{\xi_i}$ to $A$, we have $\omega_\xi(\Psi^k(L(\mu))) = \varphi(\Psi(L(\mu)))$. Thus by letting $\eta\to 0$ we see that
  $h(L(\mu))$ belongs to the closure of $\{\varphi(\Psi(L(\mu))) : \varphi\in S(A)\}$.
  Finally since all elements $x\in \C[\G]$ can be written $x = L(\mu)$ as above, Lemma~\ref{Weak* Approximately Equal Lemma} shows that $\Psi$ factors $h$.
\end{proof}

\begin{cor}\label{Strongly C* faithful Implies C*simple and Faithful}
  If $\G$ admits a $\G$-boundary with a strongly $C^*$-faithful action, then $\G$ has the \PAP{}. In particular, $\G$ is $C^*$-simple.
\end{cor}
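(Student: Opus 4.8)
**Proof plan for Corollary~\ref{Strongly C* faithful Implies C*simple and Faithful}.**

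My plan is to combine Theorem~\ref{Strong Faithfulness Implies Faithful Expectations} with the characterisation of the \PAP{} through ucp maps into the Furstenberg boundary, namely Corollary~\ref{PAPUniqueGMap}. By Corollary~\ref{Cor C*faithful universal}, strong $C^*$-faithfulness of a given $\G$-boundary $A$ passes to the universal Furstenberg boundary $C(\partial_F\G)$, so without loss of generality I may assume the strongly $C^*$-faithful action is on $A = C(\partial_F\G)$ itself. The goal is then to show that \emph{every} $\G$-equivariant ucp map $\Psi : \Cred\G\to C(\partial_F\G)$ factors $h$, since by Corollary~\ref{PAPUniqueGMap} this is equivalent to the \PAP, and the \PAP{} implies $C^*$-simplicity by Proposition~\ref{PowersAveSimplicity}.

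The key step is to turn an arbitrary $\G$-equivariant ucp map $\Psi : \Cred\G\to C(\partial_F\G)$ into a conditional expectation to which Theorem~\ref{Strong Faithfulness Implies Faithful Expectations} applies. For this I would use $\G$-injectivity of $C(\partial_F\G)$: starting from the $\G$-equivariant inclusion $\Cred\G \subseteq C(\partial_F\G)\rtimes_r\G$ and the given map $\Psi$, I extend $\Psi$ to a $\G$-equivariant ucp map $\tilde\Psi : C(\partial_F\G)\rtimes_r\G \to C(\partial_F\G)$. By $\G$-rigidity of the Furstenberg boundary, the restriction of $\tilde\Psi$ to $C(\partial_F\G)$ is the identity, so $\tilde\Psi$ is in fact a conditional expectation $C(\partial_F\G)\rtimes_r\G \to C(\partial_F\G)$. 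This is precisely the kind of argument already carried out in the proof of Corollary~\ref{TopFreenessImpliesPAPKac}(2). Now Theorem~\ref{Strong Faithfulness Implies Faithful Expectations} applies to this conditional expectation and tells me that its restriction to $\Cred\G$, which is exactly $\Psi$, factors $h$. Since $\Psi$ was arbitrary, Corollary~\ref{PAPUniqueGMap}(1) gives the \PAP.

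The final sentence of the statement, ``in particular $\G$ is $C^*$-simple'', is then immediate from Proposition~\ref{PowersAveSimplicity}, which records that the \PAP{} implies simplicity of $\Cred\G$.

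I expect the main obstacle to be the rigidity step: one must verify carefully that the extension $\tilde\Psi$ restricts to the identity on $C(\partial_F\G)$, so that it is genuinely a conditional expectation rather than just a ucp map. This relies on the fact that the only $\G$-equivariant ucp map $C(\partial_F\G)\to C(\partial_F\G)$ is the identity (rigidity of the injective $\G$-boundary), which must be invoked exactly as in Corollary~\ref{TopFreenessImpliesPAPKac}. Everything else is a direct chaining of results already established in the excerpt.
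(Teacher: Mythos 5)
Your proposal is correct and follows exactly the paper's own argument: pass strong $C^*$-faithfulness to $C(\partial_F\G)$ via Corollary~\ref{Cor C*faithful universal}, extend an arbitrary $\G$-equivariant ucp map $\Psi:\Cred\G\to C(\partial_F\G)$ to a conditional expectation on $C(\partial_F\G)\rtimes_r\G$ using $\G$-injectivity and $\G$-rigidity, then apply Theorem~\ref{Strong Faithfulness Implies Faithful Expectations}, Corollary~\ref{PAPUniqueGMap}, and Proposition~\ref{PowersAveSimplicity}. The rigidity step you flag as the main obstacle is handled in the paper exactly as you describe, mirroring Corollary~\ref{TopFreenessImpliesPAPKac}.
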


\begin{proof}
  By Corollary~\ref{Cor C*faithful universal}, the action of $\G$ on $C(\partial_F\G)$ is strongly $C^*$-faithful. Let $\Psi : \Cred\G \to C(\partial_F\G)$ be a
  $\G$-equivariant ucp map. By $\G$-injectivity, extend it to $E : C(\partial_F\G)\rtimes_r\G \to C(\partial_F\G)$. By $\G$-rigidity, $E$ is a conditional
  expectation. We can thus apply Theorem~\ref{Strong Faithfulness Implies Faithful Expectations} which shows that $\Psi$ factors $h$, and by Corollary \ref{PAPUniqueGMap}, $\G$ has the \PAP{}. By Proposition \ref{PowersAveSimplicity}, $\G$ is $C^*$-simple.
\end{proof}

\section{The boundary of $\F U_F$ is a $\G$-boundary}

In this section we prove that $C(\partial_G\F U_F)$ is an $\F U_F$-boundary, using the unique stationarity method from \cite{KKSV22}. We use the notation from Section~\ref{Section Boundary FUF}. The main object of the proof is the canonical ``harmonic state'' on $C(\partial\F U_F)$ that we describe now.

Denote $\qTr_n = \sum_{|x|=n} \qTr_x$,
$\qdim(n) = \sum_{|x|=n}\qdim(x)$ and $\qtr_n = \qTr_n(\cdot)/\qdim(n)$ ---
which is different from $\sum_{|x|=n}\qtr_x$. 
We have the following convolution formula in $\ell^1(\F U_F)$:
\begin{equation}\label{eq_convolution_traces}
  \qTr_x * \qTr_y = {\ts\sum_z} m(z,x\otimes y) \qTr_z,
\end{equation}
where $m(z,x\otimes y) = \dim \Hom(z,x\otimes y)$.
According to this formula and to
the fusion rules of $\F U_F$ we have
$\qTr_1 * \qTr_n = \qTr_{n+1} + 2 \qTr_{n-1}$ for $n\geq 1$. Since
$\qTr_1(1) = \qdim(u)+\qdim(\bar u) = 2[2]_q$, this gives in particular
$2[2]_q\qdim(n) = \qdim(n+1)+2\qdim(n-1)$. Denoting $\kappa$ the unique number
in $]0,1[$ such that $\kappa+\kappa^{-1} = \sqrt 2[2]_q$, this yields
$\qdim(n) = \sqrt 2^n[n+1]_\kappa$.

More generally, if $\mu$ is a probability measure on $I$ we denote
$\psi_\mu = \sum \mu(x)\qtr_x$ the associated state on $c_0(\G)$. It is
shown in \cite{VVV10} that if $\mu$ is generating and finitely supported, then
the sequence $\psi_\mu^{*n}$ converges $*$-weakly on $B = C(\beta_G\F U_F)\subset \ell^\infty(\F U_F)$ to a state
$\omega\in B^*$, which is KMS with respect to the modular automorphism group and  factors to a faithful state still denoted $\omega$ on
$B_\infty$. Moreover the ``$\mu$-harmonic state'' $\omega$ is
$\psi_\mu$-stationary, meaning that $\psi_\mu*\omega = \omega$, and the
corresponding Poisson map $P_\mu : C(\partial_G\F U_F) \to H^\infty(\F U_F,\mu)$
is completely isometric.

Now we restrict to the case $\mu = \mu_1 := \frac 12(\delta_u + \delta_{\bar u})$,
i.e.\ $\psi_\mu = \qtr_1$, which is sufficient to prove that
$C(\partial_G\F U_F)$ is an $\F U_F$-boundary. From the remarks above it follows that
$\qtr_1^{*n}$ is a convex combination of the states $\qtr_k$, $0\leq k\leq n$,
$k+n$ even. We show below that already the sequence $(\qtr_n)_n$ converges
$*$-weakly on $B$ to a state $\omega \in B^*$, which then coincides clearly with
the harmonic state of the previous paragraph. We also compute $\omega$
explicitly, although we will not need it in the sequel. Recall the notation $\alpha^{(l)} = \alpha\bar\alpha\alpha\cdots$ from Section~\ref{Section Boundary FUF}, for $\alpha = u$ or $\bar u$.

\begin{lem} \label{lem_harmonic_state} Fix $x\in I$, $x\neq 1$, and
  $a\in p_x \ell^\infty(\F U_F) \simeq B(H_x)$. Then
  $\qtr_n(\psi_{x,\infty}(a))$ converges as $n\to\infty$. Moreover, let
  $l\in\N^*$ be the unique integer such that $x\simeq x'\otimes \alpha^{(l)}$
  with $x'\in I$, $\alpha \in \{u,\bar u\}$. Then the limit is:
  \begin{gather*}
    \lim_{n\infty}\qtr_n(\psi_{x,\infty}(a)) = \omega(\psi_{x,\infty}(a)) = \qtr_x(a)\times \omega_I(\partial I(x)), \quad \text{where} \\
    \omega_I(\partial I(x)) = \qdim(x) \left({\ts\frac{\kappa}{\sqrt 2}}\right)^{|x|} \left( 1 - {\ts\frac{\kappa}{\sqrt 2}\frac {[l]_q}{[l+1]_q}}\right).
  \end{gather*}
\end{lem}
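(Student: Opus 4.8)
The plan is to reduce the statement to a scalar second-order linear recursion identical to the one satisfied by $\qdim(n)$. Writing $k=|x|$ and
\begin{equation*}
  S_n(a) = \sum_{|y|=n-k}\qTr_{xy}(\psi_{x,xy}(a)),
\end{equation*}
we have $\qtr_n(\psi_{x,\infty}(a)) = S_n(a)/\qdim(n)$, so it suffices to control $S_n$. The engine of the computation is the identity
\begin{equation*}
  \sum_{c\subset w\otimes\alpha}\qTr_c\bigl(V(c,w\otimes\alpha)^*(b\otimes\id_\alpha)V(c,w\otimes\alpha)\bigr) = \qdim(\alpha)\,\qTr_w(b),
\end{equation*}
valid for $w\in I$, $\alpha\in\{u,\bar u\}$, $b\in B(H_w)$, the sum running over all irreducible subobjects of $w\otimes\alpha$. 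This follows from $\sum_c P(c,w\otimes\alpha)=\id$, from the fact that each $P(c,w\otimes\alpha)$ commutes with $Q_w\otimes Q_\alpha = Q_{w\otimes\alpha}$ (the intertwiners $V$ are compatible with the $S^2$-action implemented by the $Q$'s), and from $\Tr((Q_w\otimes Q_\alpha)(b\otimes\id_\alpha))=\qTr_w(b)\Tr(Q_\alpha)$.

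First I would establish the recursion. Appending a letter and using $\psi_{w',w'\alpha}\circ\psi_{x,w'}=\psi_{x,w'\alpha}$, the sum $S_{n+1}(a)$ splits, for each length-$n$ prefix-$x$ word $w'$, as the full ``sum over subobjects of $w'\otimes\alpha$'' minus the unique lower-order term produced by the fusion rules (occurring exactly for $\alpha=\bar\beta'$, where $\beta'$ is the last letter of $w'$, and landing on $w'_0:=w'$ with its last letter deleted). By the displayed identity the first part contributes $\sum_\alpha\qdim(\alpha)\qTr_{w'}(\psi_{x,w'}(a))=2[2]_q\,\qTr_{w'}(\psi_{x,w'}(a))$, summing to $2[2]_q S_n(a)$. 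For the lower terms I would show that the ``down--up--down'' composite $\Xi_\beta:B(H_v)\to B(H_v)$ --- include $v$ into $v\otimes\beta$ and compress via $\psi_{v,v\beta}$, then include $v\beta$ into $v\beta\otimes\bar\beta$ and compress back onto the lower summand $v$ --- satisfies $\qTr_v\circ\Xi_\beta=\qTr_v$. Indeed $\Xi_\beta$ has the form $b\mapsto W^*(b\otimes\id_\beta\otimes\id_{\bar\beta})W$ for an isometric intertwiner $W:H_v\to H_v\otimes H_\beta\otimes H_{\bar\beta}$, so $\Xi_\beta(\id_v)=\id_v$; and $\qTr_v\circ\Xi_\beta$ is proportional to $\qTr_v$ because the partial quantum trace $(\id_v\otimes\qTr_\beta\otimes\qTr_{\bar\beta})(WW^*)$ is a morphism on the irreducible $v$, hence scalar, with constant $1$ by evaluation at $\id_v$. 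Since each $w'_0$ of length $n-1$ is reached by exactly the two words $w'_0u$, $w'_0\bar u$, the lower terms sum to $2S_{n-1}(a)$, giving
\begin{equation*}
  S_{n+1}(a) = 2[2]_q\,S_n(a) - 2\,S_{n-1}(a)\qquad(n\geq k+1),
\end{equation*}
the same recursion as $\qdim(n+1)=2[2]_q\qdim(n)-2\qdim(n-1)$.

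Next I would pin down the two initial values. Clearly $S_k(a)=\qTr_x(a)$, while the same splitting applied to $w'=x$ gives $S_{k+1}(a)=2[2]_q\qTr_x(a)-L(a)$, where $L(a)=\qTr_{x_0}(V(x_0,x\otimes\bar\beta)^*(a\otimes\id_{\bar\beta})V(x_0,x\otimes\bar\beta))$ is the single lower term at $x_0=x'\alpha^{(l-1)}$. The partial-trace-is-a-morphism argument again shows $L=\sigma\,\qTr_x$ with $\sigma=\qdim(x_0)/\qdim(x)=[l]_q/[l+1]_q$ (applying $\qTr_x$ identifies $\sigma$, and the ratio of quantum dimensions is read off the block decomposition, the terminal run shortening from $\alpha^{(l)}$ to $\alpha^{(l-1)}$). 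Hence $S_n(a)=s_n\,\qTr_x(a)$ for a scalar sequence $s_n$ with $s_k=1$, $s_{k+1}=2[2]_q-[l]_q/[l+1]_q$ solving the recursion above; this factorization through $\qTr_x$ already accounts for the factor $\qtr_x(a)$ in the statement.

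Finally I would solve the recursion. Its characteristic roots are $\lambda_{\pm}=\sqrt2\,\kappa^{\mp1}$ (from $\kappa+\kappa^{-1}=\sqrt2[2]_q$), and since $\qdim(n)=\sqrt2^{\,n}[n+1]_\kappa$ is the solution growing like the dominant root $\lambda_+$, the ratio $s_n/\qdim(n)$ converges, which proves the convergence claim. Extracting the coefficient $A$ of $\lambda_+^n$ in $s_n$ from the two initial values gives $\lim_n s_n/\qdim(n)=A(1-\kappa^2)$, and a short simplification using $2[2]_q-\sqrt2\kappa=\sqrt2\kappa^{-1}$ yields $\qdim(x)A(1-\kappa^2)=\qdim(x)(\kappa/\sqrt2)^{|x|}\bigl(1-(\kappa/\sqrt2)[l]_q/[l+1]_q\bigr)$, which is exactly $\omega_I(\partial I(x))$. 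I expect the main obstacle to be the clean justification of the two ``morphism'' reductions --- that $\qTr_v\circ\Xi_\beta=\qTr_v$ and that $L$ is a scalar multiple of $\qTr_x$ --- both resting on the compatibility of the modular matrices $Q$ with intertwiners and on partial quantum traces sending morphisms to morphisms; everything else is bookkeeping with the fusion rules and a constant-coefficient recursion.
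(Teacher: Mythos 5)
Your proof is correct, and it reaches the same recursion as the paper, but the two arguments are organized quite differently. The paper's proof rests on a single termwise identity: for $z\geq x$, the partial quantum trace $(\id\otimes\qTr_y)(P(z,x\otimes y))$ is an intertwiner of the irreducible $x$, hence a scalar multiple of $\id_x$, which gives $\qTr_z(\psi_{x,z}(a)) = \frac{\qdim(z)}{\qdim(x)}\,\qTr_x(a)$ outright. Summing over $z\in I_n$ scalarizes the whole problem in one stroke, $\qTr_n(\psi_{x,\infty}(a)) = \qdim(x,n)\,\qtr_x(a)$ with $\qdim(x,n) = \sum\{\qdim(z) : |z|=n,\ z\geq x\}$, after which everything is pure $\qdim$-combinatorics: $\qdim(x,n)$ satisfies the same recursion as $\qdim(n)$, and the limit is computed by reducing to $x=\alpha^{(l)}$ via $\qdim(x,n)=\qdim(x')\qdim(\alpha^{(l)},n-|x'|)$ and $\qdim(n-|x'|)/\qdim(n)\to(\kappa/\sqrt2)^{|x'|}$. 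You instead derive the three-term recursion for the aggregate $S_n(a)$ by a one-step fusion argument; this costs you the extra down-up-down lemma $\qTr_v\circ\Xi_\beta=\qTr_v$ and the separate identification of the lower term $L$ as $\frac{[l]_q}{[l+1]_q}\qTr_x$, and the factorization through $\qTr_x(a)$ only emerges from the linearity of the recursion rather than being visible term by term. Both routes hinge on the same principle --- partial quantum traces send intertwiners to intertwiners, hence to scalars on irreducibles --- and end at the same constant-coefficient recursion with roots $\sqrt2\,\kappa^{\mp1}$; the paper's version is shorter because the termwise identity does all the representation-theoretic work at once, while yours avoids the reduction to $x=\alpha^{(l)}$ by solving the general initial-value problem directly. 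For the record, your initial datum $s_{|x|+1}=2[2]_q-[l]_q/[l+1]_q$ agrees with the paper's $\qdim(x,|x|+1)/\qdim(x)=[2]_q+[l+2]_q/[l+1]_q$ via $[2]_q[l+1]_q=[l+2]_q+[l]_q$, and your final evaluation of the limit checks out.
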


\begin{proof}
  If $z\subset x\otimes y$ with $|z| = |x|+|y|$ we have $\qTr_z = \qTr_x\otimes\qTr_y$ on $B(H_z) \subset B(H_x)\otimes B(H_y)$. Observe moreover that
  $(\id\otimes \qTr_y)(P(z,x\otimes y)) = (\qdim(z)/\qdim(x))\id_x$ since this element of $B(H_x)$ is an intertwiner, hence a multiple of $\id_x$. From this we get
  \begin{displaymath}
    \qTr_z(\psi_{x,z}(a)) = (\qTr_x\otimes\qTr_y)[P(z,x\otimes y)
      (a\otimes\id_y)] = {\ts\frac {\qdim(z)}{\qdim(x)}} \qTr_x(a).
  \end{displaymath}
  Summing over $z \in I_n$, with $n\geq |x|$, we get
  $\qTr_n(\psi_{x,\infty}(a)) = \qdim(x,n) \qtr_x(a) $, where we put
  $\qdim(x,n) = \sum \{\qdim(z) \mid |z| = n, z\geq x\}$. Thus it remains to
  show that $\qdim(x,n) / \qdim(n)$ has the limit given in the statement when
  $n\to\infty$. According to the fusion rules the sequence $(\qdim(x,n))_n$
  satisfies the same recursion equation than $(\qdim(n))_n$, but with a
  different initialization. This already shows the existence of the limit.

  To compute the limit, first note that $\qdim(x,n) = \qdim(x')\qdim(\alpha^{(l)}, n-|x'|)$. Moreover
    $\qdim(n-|x'|)/\qdim(n) \to (\kappa/\sqrt 2)^{|x'|}$ as $n\to\infty$, thanks to the formula $\qdim(n) = \sqrt 2^n[n+1]_\kappa$. As a result it suffices to
    consider the case $x = \alpha^{(l)}$. In this case we obtain the result by solving the above recursion relation with the
  initialization $\qdim(x,l) = \qdim(x) = [l+1]_q$ and $\qdim(x,l+1) = \qdim(xu) + \qdim(x\bar u) = [2]_q[l+1]_q + [l+2]_q$.
\end{proof}

The result above completely determines the harmonic state $\omega$ on
$C(\partial_G\F U_F)$ and in particular the classical probability measure
$\omega_I$ on $\beta I$. We get in particular
$\omega_I(\partial I(x)) \leq ([2]_q\kappa/\sqrt 2)^{|x|}$. One can check that
$[2]_q\kappa/\sqrt 2 < 1$, and thus $\omega_I$ is non-atomic. Note that for arbitrary $\mu$ it is already shown in \cite{VVV10} that $\omega_I$ has no atoms at
points of the form $x\alpha^{(\infty)}$, and in \cite{VVthesis} that $\omega_I$ has no atoms at all. The following Lemma can be seen as a
quantitative and uniform version of the fact that $\omega_I$ has no atoms at
points of the form $x\alpha^{(\infty)}$, in the case $\mu = \mu_1$: indeed by taking the limit $n\to\infty$ it yields $\omega_I(\partial I(x\alpha^{(k+1)}))\leq 2^{-k}$ for any $x\in I$, $k\in\N$.

\begin{lem}\label{lem_trace_mod}
  For fixed $p\leq p+k\leq n$, denote $I_n^{(p,k)} \subset I_n$ the set of words $y\in I_n$ which have a
  repetition of letters between the $p$th and $(p+k)$th letter, i.e.\ we have
  $y\simeq y'\otimes y''$ with $p \leq |y'|\leq p+k$. Denote
  $\qtr_n^{(p,k)} = \qdim(n)^{-1} \sum \{\qTr_y \mid y\in I_n^{(p,k)}\}$. Then for all $n\geq p+k$ we have
  \begin{displaymath}
    \|\qtr_n - \qtr_n^{(p,k)}\|\leq 2^{-k}.
  \end{displaymath}
\end{lem}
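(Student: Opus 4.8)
The plan is to rewrite $\|\qtr_n - \qtr_n^{(p,k)}\|$ as a normalized sum of quantum dimensions over a combinatorially described set of words, and then to bound that sum by an elementary ``surgery'' on words.

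First I would note that $\qtr_n - \qtr_n^{(p,k)} = \qdim(n)^{-1}\sum_{y\in I_n\setminus I_n^{(p,k)}}\qTr_y$ is a \emph{positive} functional supported on the block $p_n\ell^\infty(\F U_F)$, since each $\qTr_y$ is positive on $B(H_y)$ and the blocks are mutually orthogonal. Hence its norm is its value on the unit $p_n$, namely $\qdim(n)^{-1}\sum_y\qTr_y(\id_{H_y}) = \qdim(n)^{-1}\sum_y\qdim(y)$, using $\qTr_y(\id) = \qdim(y)$. So the statement reduces to the estimate $\sum_{y\in\mathcal{C}}\qdim(y)\le 2^{-k}\qdim(n)$, where $\mathcal{C} := I_n\setminus I_n^{(p,k)}$. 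By the fusion rules, $\mathcal{C}$ is exactly the set of words $y = \alpha_1\cdots\alpha_n\in I_n$ with no repetition of letters at positions $p,\dots,p+k$, i.e. $\alpha_i\ne\alpha_{i+1}$ whenever $p\le i\le p+k$ (equivalently, none of these positions is a block boundary of $y$).

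The engine of the bound is the elementary $q$-number inequality $[a]_q[b]_q\ge[a+b-1]_q$ for $a,b\ge 1$, valid since $q\in\,]0,1]$: indeed $[a]_q[b]_q = \sum_{j=0}^{\min(a,b)-1}[a+b-1-2j]_q$ and every term is nonnegative. Iterating it shows that if a word is modified by cutting one of its alternating runs (of length $L$, contributing $[L+1]_q$) into shorter runs of lengths $L_1,\dots,L_r$ with $\sum_i L_i = L$ by inserting repetitions, then $\prod_i[L_i+1]_q\ge[L+1]_q$; since the other blocks are untouched, the quantum dimension of the whole word does not decrease. Next I would set up the surgery. For $y\in\mathcal{C}$ and a subset $T\subseteq\{p,\dots,p+k-1\}$, let $y^T$ be the word with the same first letter as $y$ whose repetition pattern agrees with that of $y$ except that a repetition is forced at each position of $T$. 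Every position of $T$ lies inside the single alternating run of $y$ covering the window, so $y^T$ is obtained from $y$ precisely by splitting that run; thus $\qdim(y^T)\ge\qdim(y)$ by the previous step, and summing over the $2^k$ subsets gives $\sum_T\qdim(y^T)\ge 2^k\qdim(y)$. On the other hand $(y,T)\mapsto y^T$ is injective — from $y^T$ one recovers $T$ as its set of repetitions inside $\{p,\dots,p+k-1\}$ and then $y$ by deleting them, the first letter being preserved — and its image lies in $I_n$, so $\sum_{y\in\mathcal{C}}\sum_T\qdim(y^T)\le\sum_{z\in I_n}\qdim(z) = \qdim(n)$. Combining the two displays yields $2^k\sum_{\mathcal{C}}\qdim(y)\le\qdim(n)$, which is the required estimate; the hypothesis $n\ge p+k$ is exactly what makes the positions $p,\dots,p+k-1$ legitimate interior positions of a length-$n$ word, so that the surgery is well defined.

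I expect the main obstacle to be purely bookkeeping: verifying that the positions flipped by $T$ all lie strictly inside one alternating run of $y$ (so that the splitting inequality applies cleanly and no forced repetition is ``wasted''), checking injectivity of $(y,T)\mapsto y^T$ together with the identification of $\mathcal{C}$ via the fusion rules, and handling the low-dimensional edge behaviour near $i = n$. The two genuinely mathematical inputs — positivity of the functional and the single $q$-number inequality $[a]_q[b]_q\ge[a+b-1]_q$ — are immediate.
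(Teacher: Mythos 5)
Your proof is correct, and it reaches the estimate by a genuinely different route for the key combinatorial step, even though the overall scheme coincides with the paper's: both reduce the norm, via positivity, to the normalized sum $\qdim(n)^{-1}\sum_{y}\qdim(y)$ over the complement of $I_n^{(p,k)}$ and then beat that sum down by $2^{-k}$ through a comparison with the full sum $\qdim(n)=\sum_{z\in I_n}\qdim(z)$. The paper writes each $y$ in the complement as $y=y_1\alpha^{(k+1)}y_2$ with $|y_1|=p-1$ and invokes, as an unproved observation, the inequality $\qdim(y_1\alpha^{(k+1)}y_2)\leq\qdim(y_1zy_2)$ for every $z\in I_{k+1}$; averaging over the $2^{k+1}$ choices of $z$ and using that every word of $I_n$ factors uniquely as $y_1zy_2$ then gives the bound. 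That observation is slightly delicate, because substituting $z$ for $\alpha^{(k+1)}$ can also create or destroy repetitions at the two junctions with $y_1$ and $y_2$, so the two run patterns need not be nested. Your surgery sidesteps this entirely: by only \emph{inserting} repetitions at the positions of $T$ (never changing letters), the run pattern of $y^T$ genuinely refines that of $y$, and the single inequality $[a]_q[b]_q\geq[a+b-1]_q$ does all the work; the counting is then handled by injectivity of $(y,T)\mapsto y^T$ into $I_n$ rather than by the exact partition of $I_n$ into concatenations. The net effect is the same constant (you exploit $k$ free positions where the paper uses $k+1$ and then gives a factor $2$ back to the choice of $\alpha$), but your argument is self-contained where the paper leaves its ``observe'' step to the reader. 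The supporting details you flag --- positivity giving the norm as the value at $p_n$, the positions of $T$ lying inside a single alternating run of $y$, recoverability of $(y,T)$ from $y^T$, and $n\geq p+k$ making the window legitimate --- all check out.
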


\begin{proof}
  We have
  $\qdim(n) \|\qtr_n - \qtr_n^{(p,k)}\| = \sum \{\qTr_y(1) \mid y\in
  {}^cI_n^{(p,k)}\}$. Any $y\in {}^cI_n^{(p,k)} = I_n\setminus I_n^{(p,k)}$ can
  be written $y = y_1\alpha^{(k+1)}y_2$ with $|y_1| = p-1$, $\alpha = u$ or
  $\bar u$. Observe moreover that
  $\qdim(y_1\alpha^{(k+1)}y_2) \leq \qdim(y_1zy_2)$ for any $z\in I_{k+1}$,
  hence $\qTr_y(1) = \qdim(y) \leq 2^{-k-1} \sum_{z\in
    I_{k+1}}\qdim(y_1zy_2)$. Summing over $y_1\in I_{p-1}$,
  $\alpha\in\{u,\bar u\}$ and $y_2 \in I_{n-p-k}$ this yields
  $\qdim(n) \|\qtr_n - \qtr_n^{(p,k)}\| \leq 2^{-k}
  \sum_{y_1,z,y_2}\qdim(y_1zy_2) = 2^{-k}\qdim(n)$.
\end{proof}

\bigskip

We prove now that $\partial_G\F U_F$ is indeed a $\F U_F$-boundary, using the
unique stationarity method from \cite{KKSV22}. We
still work with the harmonic state $\omega$ on $C(\partial_G \F U_F)$ induced by
$\psi_\mu = \qtr_1 \in \ell^\infty(\F U_F)_*$. Recall that $\omega$ is
$\qtr_1$-stationary, meaning that ${\qtr_1} * \omega = \omega$, and the
corresponding Poisson map
$P_1 = (\id\otimes\omega)\beta : C(\partial_G\F U_F) \to H^\infty(\F U_F,\qtr_1)
\subset \ell^\infty(\F U_F)$ is completely isometric \cite{VVV10}.  In this
setting it follows that $\partial_G\F U_F$ is an $\F U_F$-boundary if one can
prove that $\omega$ is the {\em unique} $\qtr_1$-stationary state $\nu$ on
$C(\partial_G\F U_F)$ \cite{KKSV22}. 

This unique stationarity result was established in \cite{HHN22} under the
additional assumption that $\nu$ is invariant with respect to the adjoint action
of the dual compact quandum group --- in other words $\omega$ is the unique
stationary state with respect to the natural action of the Drinfeld double
$D(\F U_F)$ on $\partial_G \F U_F$. Considering the action of the Drinfeld
double makes the situation much more rigid: for instance in the orthogonal
case, $\partial_G \F O_F$ is the {\em only} non-trivial $D(\F O_F)$-boundary (see Example 3.19 of \cite{HHN22}).

\bigskip

We start with the intermediate goal of proving that a $\qtr_1$-stationary state
$\nu$ on $C(\partial_G \F U_F)$ has no atoms at classical points of the form
$x\alpha^{(\infty)}\in\partial I$. If $\F U_F$ is unimodular, i.e.\ $F$ is a
multiple of a unitary matrix, this is easy. Indeed in this case, the left
and right quantum traces are equal and in particular
$\ell^\infty(I)\subset \ell^\infty(\F U_F)$ is stable under left convolution by
$\qtr_1$, which then corresponds to the Markov operator of a classical random
walk on $I$. But Proposition 3.23 of \cite{HHN22} shows that this classical
random walk has a unique stationary probability measure, so that necessarily
$\nu_I = \omega_I$, which is already known to have no atoms at points
$x\alpha^{(\infty)}$, as recalled above.

The general case is more involved but still follows the classical strategy: if
$\nu$ is a $\mu$-stationary measure on the Gromov boundary $\partial_G F_N$ of
a classical free group, the function $h : x\to \nu(\{x\})$ on $\partial_G F_N$
is summable and $\mu$-harmonic with respect to the action of $F_N$ on its
boundary. Since all orbits of the action are infinite, this implies $h=0$. One
can also work with the associated functions $\gamma\mapsto h(\gamma\cdot x_0)$
on $F_N$, which are easier to manipulate in the quantum case --- this is the ``classical analogue'' for the elements $h$, $\bar h$ introduced below.

\bigskip

Fix a $\qtr_1$-stationary state $\nu$ on $C(\partial_G \F U_F)$ ; we still
denote $\nu$ the induced state on
$C(\beta_G \F U_F) \subset \ell^\infty(\F U_F)$. Note that
$\pi_{u^{(k)}} = \sum_{y\in I} p_{u^{(k)}y}$, $k\in\N$, is a decreasing sequence of
projections in $C(\beta_G\F U_F)$. Denote
$h^{(k)} = P_\nu(\pi_{u^{(k)}}) = (\id\otimes\nu)\Delta(\pi_{u^{(k)}})$ and
consider the decreasing limit
$h = \lim_{k\infty} h^{(k)} \in \ell^\infty(\F U_F)$. Similarly, denote
$\bar h^{(k)} = P_\nu(\pi_{\bar u^{(k)}})$ and
$\bar h= \lim_{k\infty} \bar h^{(k)}$. Our goal is to prove that $h=\bar h =0$.

\begin{lem} \label{lem_transform_stabilizer}
  Consider the element $h = (h_y)_{y\in I}\in \ell^\infty(\F U_F)$ associated
  to a state $\nu \in C(\partial_G\F U_F)^*$ as above.  For $x\in I(\bar u)$ and $l\in\N$ we have
  $h_{u^{(2l)}x} = p_{u^{(2l)}}\otimes h_x$ in the identification
  $B(H_{u^{(2l)}x})\simeq B(H_{u^{(2l)}})\otimes B(H_x)$. Similarly for
  $x\in I(u)$ we have $h_{u^{(2l+1)}x} = p_{u^{(2l+1)}}\otimes \bar h_x$.
\end{lem}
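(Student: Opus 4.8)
The plan is to prove the even case $h_{u^{(2l)}x}=p_{u^{(2l)}}\otimes h_x$ for $x\in I(\bar u)$ in detail and to obtain the odd case by the identical argument, the only difference being that $u^{(2l+1)}$ ends in the letter $u$ instead of $\bar u$, so that the relevant boundary ray seen from $x$ is $\bar u^{(\infty)}$ and $h$ is replaced by $\bar h$. First I would record the identification in the statement: since $u^{(2l)}$ ends in $\bar u$ and $x\in I(\bar u)$ begins with $\bar u$, the fusion rules~\eqref{eq_fusion_rules} give $u^{(2l)}\otimes x\simeq u^{(2l)}x$ irreducible, so $V:=V(u^{(2l)}x,u^{(2l)}\otimes x)$ is unitary and the identification $B(H_{u^{(2l)}x})\simeq B(H_{u^{(2l)}})\otimes B(H_x)$ is $a\mapsto VaV^*$. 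By the coproduct formula $(p_{u^{(2l)}}\otimes p_x)\Delta(a)=V(z,u^{(2l)}\otimes x)\,a\,V(z,u^{(2l)}\otimes x)^*$ for $a\in B(H_z)$, which is nonzero only for $z=u^{(2l)}x$, this identification is precisely $a\mapsto(p_{u^{(2l)}}\otimes p_x)\Delta(a)$; hence the assertion is equivalent to $(p_{u^{(2l)}}\otimes p_x)\Delta(h)=\id_{u^{(2l)}}\otimes h_x$.

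Next I would unwind $h=\lim_k h^{(k)}$ with $h^{(k)}=P_\nu(\pi_{u^{(k)}})=(\id\otimes\nu)\Delta(\pi_{u^{(k)}})$ and use equivariance of the Poisson transform, i.e.\ coassociativity: $(p_{u^{(2l)}}\otimes p_x)\Delta(h^{(k)})=(p_{u^{(2l)}}\otimes p_x\otimes\nu)(\Delta\otimes\id)\Delta(\pi_{u^{(k)}})$. The crucial point is that $\nu$ factors through the quotient $B_\infty=B/c_0(\F U_F)$ and therefore annihilates $c_0(\F U_F)$; consequently $h^{(k)}_{u^{(2l)}x}$ depends only on the class of the $(u^{(2l)},x)$-block of $\Delta(\pi_{u^{(k)}})$ in $B(H_{u^{(2l)}x})\otimes B_\infty$, which is governed by the far blocks $p_{\geq r}$ with $r\to\infty$. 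This is exactly the regime described by Lemma~\ref{lem_boundary_action} applied to $a_k=p_{u^{(k)}}=\id_{u^{(k)}}$, which approximates $(p_y\otimes p_{\geq r})\Delta(\pi_{u^{(k)}})$ in norm by the explicit $\psi_{r,\infty}$-image $(p_y\otimes\psi_{r,\infty})(b)$. Feeding this into the expression above turns the computation of both sides into a computation with the intertwiners $V(\cdot,\cdot)$ appearing in $\psi_{r,\infty}$ and in the coproduct.

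The main obstacle is the fusion-and-intertwiner bookkeeping that produces the tensor factor $\id_{u^{(2l)}}$. Decomposing $x\otimes t=\bigoplus_{z'}z'$ (multiplicity free) and analyzing the subobjects of $u^{(2l)}\otimes z'$ that start with $u^{(k)}$, one sees that the naive ``prepend $u^{(2l)}$'' map on subobjects fails, and that the relevant contributions come through cancellations governed by whether $z'\geq u^{(k-2l)}$; the appearance of $\id_{u^{(2l)}}$ reflects that the harmonic mass is uniform over the prepended $u^{(2l)}$-directions, and I expect to extract it precisely from the factorization of the $V$'s in the spirit of the estimate of \cite{VVthesis} already used in Lemma~\ref{lem_boundary_action}. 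A second subtlety is that the desired identity is \emph{not} exact at finite $k$: the two sides are fed by blocks at different depths, so they can only agree after $k\to\infty$; matching the surviving boundary masses is where I anticipate invoking the $\qtr_1$-stationarity $\qtr_1*\nu=\nu$ of $\nu$ (equivalently the $\qtr_1$-harmonicity of $h^{(k)}$).

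Carrying out this limit --- replacing the far blocks of $\Delta(\pi_{u^{(k)}})$ by their $\psi_{r,\infty}$-forms, performing the intertwiner factorization to split off $\id_{u^{(2l)}}$, and letting $k\to\infty$ --- yields $(p_{u^{(2l)}}\otimes p_x)\Delta(h)=\id_{u^{(2l)}}\otimes h_x$, which is the even case; the odd case follows verbatim with $\bar u^{(k)}$ in place of $u^{(k)}$, giving $\bar h_x$.
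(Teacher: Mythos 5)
Your setup is right: the identification $B(H_{u^{(2l)}x})\simeq B(H_{u^{(2l)}})\otimes B(H_x)$ via the unitary $V(u^{(2l)}x,u^{(2l)}\otimes x)$, the reformulation as $(p_{u^{(2l)}}\otimes p_x)\Delta(h)=p_{u^{(2l)}}\otimes h_x$, and the use of coassociativity together with the fact that $\nu$ kills $c_0(\F U_F)$. But the core of the argument is missing: everything after ``the main obstacle is the fusion-and-intertwiner bookkeeping'' is a description of a difficulty, not a resolution of it. You do not actually compute the blocks $(p_{u^{(2l)}x}\otimes p_y)\Delta(p_{u^{(k)}z})$, and the two tools you reach for are the wrong ones. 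Lemma~\ref{lem_boundary_action} gives only a norm approximation of the far blocks by $\psi_{r,\infty}$-images, with an error you would then have to track through the application of $\nu$ and the limit $k\to\infty$, and it does not by itself produce the tensor factor $\id_{u^{(2l)}}$. More tellingly, your plan to ``match the surviving boundary masses'' by invoking $\qtr_1$-stationarity signals that you have not found the mechanism that makes the finite-$k$ mismatch disappear: stationarity is not used in the proof of this lemma at all (it only enters later, in Proposition~\ref{prop_no_atoms}), and it is not clear how it would close your argument.

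The actual mechanism is positivity plus a squeeze. A direct fusion-rule computation shows that $(p_{u^{(2l)}}\otimes p_y)\Delta(\pi_{u^{(k)}})\neq 0$ forces $y\in I(u^{(k-2l)})$, while every $y\in I(u^{(k+2l)})$ contributes fully, giving the exact operator inequalities
\begin{displaymath}
p_{u^{(2l)}}\otimes\pi_{u^{(k+2l)}}\ \leq\ (p_{u^{(2l)}}\otimes 1)\Delta(\pi_{u^{(k)}})\ \leq\ p_{u^{(2l)}}\otimes\pi_{u^{(k-2l)}}.
\end{displaymath}
For general $x\in I(\bar u)$ one factorizes the intertwiners $V(u^{(k)}z,u^{(2l)}x\otimes y)$ through $u^{(2l)}\otimes u^{(k+2m)}z$ (here the decomposition $x\otimes\bar x=\bigoplus_t t$, with every $t\neq 1$ starting with $\bar u$, is what pins down the factorization) to get
$(p_{u^{(2l)}x}\otimes 1)\Delta(\pi_{u^{(k)}})=(p_{u^{(2l)}}\otimes p_x\otimes 1)(\id\otimes\Delta)[(p_{u^{(2l)}}\otimes 1)\Delta(\pi_{u^{(k)}})]$, and then the sandwich propagates to
$p_{u^{(2l)}}\otimes h^{(k+2l)}_x\leq h^{(k)}_{u^{(2l)}x}\leq p_{u^{(2l)}}\otimes h^{(k-2l)}_x$ after applying $\nu$. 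Since $h^{(k)}$ decreases to $h$, both bounds converge to $p_{u^{(2l)}}\otimes h_x$ and the finite-$k$ mismatch you worried about evaporates --- no norm estimates, no stationarity. I would encourage you to redo the proof along these lines; as it stands, the proposal identifies the right objects but does not constitute a proof.
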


\begin{proof}
  Take $k> 2l$. Start with $x = 1$. We have
  $(p_{u^{(2l)}}\otimes p_y)\Delta(p_{u^{(k)}z}) \neq 0$ iff
  $u^{(k)}z \subset u^{(2l)}\otimes y$ iff $y\subset u^{(2l)} \otimes u^{(k)}z$
  iff $y = u^{(k+2m)}z$ with $|m|\leq l$. In particular
  $(p_{u^{(2l)}}\otimes p_y)\Delta(\pi_{u^{(k)}}) \neq 0$ $\Rightarrow$
  $y\in I(u^{(k-2l)})$. On the other hand for
  $y \in I(u^{(k+2l)})$, all subobjects of $u^{(2l)}\otimes y$ are in
  $I(u^{(k)})$. As a result we have
  \begin{equation} \label{eq_bounds_pi}
    p_{u^{(2l)}}\otimes\pi_{u^{(k+2l)}} \leq (p_{u^{(2l)}}\otimes 1)\Delta(\pi_{u^{(k)}})
    \leq p_{u^{(2l)}}\otimes\pi_{u^{(k-2l)}}.
  \end{equation}

  Coming back to a general $x\in I(\bar u)$ we have as well
  $(p_{u^{(2l)}x}\otimes p_y)\Delta(p_{u^{(k)}z}) \neq 0$ iff
  $u^{(k)}z \subset u^{(2l)}x\otimes y$ iff
  $y\subset \bar x u^{(2l)} \otimes u^{(k)}z$ iff $y = \bar xu^{(k+2m)}z$ with
  $|m|\leq l$. Moreover in this case we have
  $u^{(2l)}x\otimes y = u^{(2l)}\otimes x\otimes \bar x\otimes u^{(k+2m)}z =
  \bigoplus_{t\subset x\otimes\bar x} u^{(2l)}\otimes t\otimes u^{(k+2m)}z$, and
  the objects $t$ appearing in the sum start with $\bar u$, except $t = 1$. As a
  result the isometric morphism $V(u^{(k)}z,u^{(2l)}x\otimes y)$ has to factor as
  $(\id\otimes V({u^{(k+2m)}z,}$ $x\otimes\bar
  xu^{(k+2m)}z))V(u^{(k)}z,u^{(2l)}\otimes u^{(k+2m)}z)$. Recalling from Section~\ref{Section Boundary FUF} that one can use such morphisms to compute the coproduct, we obtain for
  $y = \bar xu^{(k+2m)}z$:
  \begin{align*}
    (p_{u^{(2l)}x}\otimes p_y)\Delta(p_{u^{(k)}z})
    &= (p_{u^{(2l)}}\otimes p_x\otimes p_y)
      (\id\otimes\Delta)[(p_{u^{(2l)}}\otimes p_{u^{(k+2m)}z})\Delta(p_{u^{(k)}z})] \\
    &= (p_{u^{(2l)}}\otimes p_x\otimes p_y)
      (\id\otimes\Delta)[(p_{u^{(2l)}}\otimes 1)\Delta(p_{u^{(k)}z})],
  \end{align*}
  since $z$ and $m$ are uniquely determined by $y$. Here we are using the identifications $p_{u^{(2l)}x}\ell^\infty(\F U_F) \simeq B(H_{u^{(2l)}x}) \simeq B(H_{u^{(2l)}})\otimes B(H_x) \simeq p_{u^{(2l)}}\ell^\infty(\F U_F)\otimes p_x\ell^\infty(\F U_F)$. Summing over $z$ (first) and $y$ gives  
  \begin{displaymath}
    (p_{u^{(2l)}x}\otimes 1)\Delta(\pi_{u^{(k)}}) = (p_{u^{(2l)}}\otimes p_x\otimes 1)
    (\id\otimes\Delta)[(p_{u^{(2l)}}\otimes 1)\Delta(\pi_{u^{(k)}})].
  \end{displaymath}
  
  Now we use~\eqref{eq_bounds_pi} which yields:
  \begin{displaymath}
    p_{u^{(2l)}}\otimes [(p_x\otimes 1)\Delta(\pi_{u^{(k+2l)}})] \leq
    (p_{u^{(2l)}x}\otimes 1)\Delta(\pi_{u^{(k)}}) \leq
    p_{u^{(2l)}}\otimes [(p_x\otimes 1)\Delta(\pi_{u^{(k-2l)}})].
  \end{displaymath}
  Finally we apply $(\id\otimes\id\otimes\nu)$ to obtain
  $p_{u^{(2l)}}\otimes h^{(k+2l)}_x\leq h^{(k)}_{u^{(2l)}x} \leq
  p_{u^{(2l)}}\otimes h^{(k-2l)}_x$. Letting $k\to\infty$ yields
  $h_{u^{(2l)}x} = p_{u^{(2l)}}\otimes h_x$.

  The odd case works similarly. We have $(p_{u^{(2l+1)}x}\otimes p_y)\Delta(p_{u^{(k)}z}) \neq 0$ iff
  $u^{(k)}z \subset u^{(2l+1)}x\otimes y$ iff
  $y\subset \bar x \bar u^{(2l+1)} \otimes u^{(k)}z$ iff $y = \bar x\bar u^{(k+2m+1)}z$ with
  $-l-1\leq m\leq l$. When $x=1$ we obtain
  \begin{displaymath} 
    p_{u^{(2l+1)}}\otimes\pi_{\bar u^{(k+2l+1)}} \leq
    (p_{u^{(2l+1)}}\otimes 1)\Delta(\pi_{u^{(k)}}) \leq
    p_{u^{(2l+1)}}\otimes\pi_{\bar u^{(k-2l-1)}}.
  \end{displaymath}
  In general if $y = \bar x\bar u^{(k+2m+1)}z$ we obtain successively:
  \begin{gather*}
    (p_{u^{(2l+1)}x}\otimes p_y)\Delta(p_{u^{(k)}z})
    = (p_{u^{(2l+1)}}\otimes p_x\otimes p_y)
      (\id\otimes\Delta)[(p_{u^{(2l+1)}}\otimes p_{\bar u^{(k+2m+1)}z})\Delta(p_{u^{(k)}z})], \\
      (p_{u^{(2l+1)}x}\otimes 1)\Delta(\pi_{u^{(k)}})
      = (p_{u^{(2l+1)}}\otimes p_x\otimes 1)
      (\id\otimes\Delta)[(p_{u^{(2l+1)}}\otimes 1)\Delta(\pi_{u^{(k)}})] \quad \text{and} \\
    p_{u^{(2l+1)}}\otimes [(p_x\otimes 1)\Delta(\pi_{\bar u^{(k+2l+1)}})] \leq
    (p_{u^{(2l+1)}x}\otimes 1)\Delta(\pi_{u^{(k)}}) \leq \hspace{3cm} \\ \hspace{6cm}
    \leq p_{u^{(2l+1)}}\otimes [(p_x\otimes 1)\Delta(\pi_{\bar u^{(k-2l-1)}})].
  \end{gather*}
  Applying $(\id\otimes\id\otimes\nu)$ this yields
  $p_{u^{(2l+1)}}\otimes \bar h^{(k+2l+1)}_x \leq h^{(k)}_{u^{(2l+1)}x} \leq
  p_{u^{(2l+1)}}\otimes \bar h^{(k-2l-1)}_x$ and the result follows by taking
  the limit $k\to\infty$.
\end{proof}

\begin{prop} \label{prop_no_atoms}
  Let $\nu$ be a $\qtr_1$-stationary state on $C(\partial_G\F U_F)$. Then
  $\nu_I$ has no atoms at points of the form $x\alpha^{(\infty)}$, i.e.\ we have
  $\lim_{k\infty} \nu(\pi_{x\alpha^{(k)}}) = 0$ for all $x\in I$.
\end{prop}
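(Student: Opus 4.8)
The plan is to prove the equivalent statement that the two harmonic elements $h,\bar h\in\ell^\infty(\F U_F)$ introduced just before the statement both vanish. Indeed $\nu(\pi_{x\alpha^{(k)}})=\nu_I(\partial I(x\alpha^{(k)}))$ decreases to $\nu_I(\{x\alpha^{(\infty)}\})$ as $k\to\infty$, so the assertion is exactly that $\nu_I$ charges no eventually-alternating point, and every such point has tail $u^{(\infty)}$ or $\bar u^{(\infty)}$. The components of $h$, resp.\ $\bar h$, are the Poisson transforms detecting these two tails: evaluating the counit already gives $h_1=\lim_k\nu(\pi_{u^{(k)}})\,p_1=\nu_I(\{u^{(\infty)}\})\,p_1$. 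So I would first set up, via the same fusion-rule and Frobenius-reciprocity computations used in Lemma~\ref{lem_transform_stabilizer}, a dictionary expressing each mass $\nu_I(\{z\alpha^{(\infty)}\})$ ($z\in I$) in terms of the components of $h$ and $\bar h$, thereby reducing the Proposition to $h=\bar h=0$.

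The mechanism I would then exploit is the quantum analogue of the classical fact recalled before the statement: a stationary atom on the boundary of a free group propagates, by harmonicity, to atoms of comparable mass along an infinite orbit, contradicting finiteness of the measure. Two facts supply the quantum substitute. First, $h$ is $\qtr_1$-harmonic, $(\id\otimes\qtr_1)\Delta(h)=h$, being a decreasing limit of Poisson transforms $P_\nu(\pi_{u^{(k)}})$ and the Markov operator $(\id\otimes\qtr_1)\Delta$ being normal. Second, the stabilizer relation $h_{u^{(2l)}x}=p_{u^{(2l)}}\otimes h_x$ of Lemma~\ref{lem_transform_stabilizer}, together with the fact that $u^{(2l)}x$ is a full-length (hence multiplicity-free) subobject of $u^{(2l)}\otimes x$ for $x\in I(\bar u)$, so that $\qtr_{u^{(2l)}x}=\qtr_{u^{(2l)}}\otimes\qtr_x$, yields the self-similarity $\qtr_{u^{(2l)}x}(h_{u^{(2l)}x})=\qtr_x(h_x)$, \emph{constant} in $l$. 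Thus a single nonzero, nonnegative component $\qtr_{x}(h_x)>0$ is replicated with undiminished weight along infinitely many disjoint cylinders, the quantum image of an infinite orbit.

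To turn replication into a contradiction I would combine the finiteness constraint $\sum_{x\in I_n}\nu(\pi_x)=1$ with the uniform estimate of Lemma~\ref{lem_trace_mod}. The point of that lemma is that $\qtr_n$ charges the words strictly alternating over a window of length $k$ by at most $2^{-k}$, uniformly in the position of the window, which is precisely the combinatorial feature of the cylinders $\partial I(x\alpha^{(k)})$ (strictly alternating in positions $|x|+1,\dots,|x|+k$). I expect Lemma~\ref{lem_trace_mod} to enter as the quantitative control on how the replicated quantum-trace mass $\qtr_x(h_x)$ decomposes, through the shift, into genuine $\nu_I$-atoms: constancy along the geodesic should produce infinitely many \emph{distinct} boundary atoms of $\nu_I$ whose masses are bounded below, while the $2^{-k}$ bound caps the mass that can escape to infinity. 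The total-mass identity then forces $\qtr_x(h_x)=0$ for every $x$, i.e.\ $h=0$, and symmetrically $\bar h=0$.

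The main obstacle is the non-unimodular case, which is exactly why one works with the operator-valued $h,\bar h$ rather than with $\nu_I$ directly. When $F$ is unitary the left and right quantum traces coincide, $\ell^\infty(I)$ is stable under convolution by $\qtr_1$, and $\nu_I$ is the unique stationary measure of a classical transient walk, non-atomic by Proposition~3.23 of \cite{HHN22}; there is nothing to do. In general this fails, so the clean classical argument is unavailable, and harmonicity couples $h$ and $\bar h$ at every turning point of the alternating words without closing into a finite recursion. The delicate technical heart will therefore be to run the propagation genuinely infinitely while keeping the Woronowicz $Q$-weights under control along the two coupled families, so that the replicated masses stay bounded below precisely where Lemma~\ref{lem_trace_mod} bounds the escaping mass.
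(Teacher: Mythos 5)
Your high-level plan matches the paper's: reduce the statement to $h=\bar h=0$, use harmonicity and Lemma~\ref{lem_transform_stabilizer}, and transfer back to general atoms $x\alpha^{(\infty)}$ at the end (the paper does this last step with the right quantum trace $\qtr'_{\bar x}$ and the conjugate equations, giving $\lim_k\nu(\pi_{x\alpha^{(k)}})=(\qdim\bar x)\,\qtr'_{\bar x}(h_{\bar x})$). But the core of your argument --- how a nonzero $\qtr_x(h_x)$ leads to a contradiction --- has a genuine gap. The replication $\qtr_{u^{(2l)}x}(h_{u^{(2l)}x})=\qtr_x(h_x)$ along the geodesic does \emph{not} produce infinitely many distinct boundary atoms of bounded-below mass: the natural events attached to $\varphi(u^{(2l)}x)$ are the cylinders $\partial I(\bar x\,u^{(2l+k)})$, which are \emph{nested} (since $u^{(2l)}u^{(k)}=u^{(2l+k)}$) and shrink to the single point $\bar x u^{(\infty)}$. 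Summing them against the total-mass identity $\sum_{x\in I_n}\nu(\pi_x)=1$ therefore yields no contradiction. The summability that actually does the work in the paper is over the \emph{other} index: $\sum_{x\in I^+(\bar u)}\qtr_x(h_x)\le 1$, proved by showing that the elements $\eta_x=(\qtr_x\otimes\id)\Delta(\pi_{u^{(k)}})$ satisfy $\eta_x=\pi_{\bar x u^{(k)}}\eta_x$ with the projections $\pi_{\bar xu^{(k)}}$ pairwise orthogonal as $x$ ranges over $I^+(\bar u)$. Even with that in hand one still needs a second ingredient your sketch omits entirely: the scalar function $\varphi(x)=\qtr_x(h_x)$ is harmonic for an explicit irreducible classical random walk on the tree $I$ (this follows from compatibility of quantum traces with subobjects, $(\qTr_x\otimes\qTr_y)\Delta(a_z)=\qTr_z(a_z)$); the stabilizer lemma shows its supremum over $I$ is attained on $I^+(\bar u)\cup I^+(u)$ where it is summable; the maximum principle then forces $\varphi$ to be constant, and constancy plus summability on an infinite set forces $\varphi=0$. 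Constancy along a single geodesic, which is all your argument extracts, is not enough.

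Two smaller points. First, Lemma~\ref{lem_trace_mod} plays no role in this proposition --- it is the error-control device in the proof of Theorem~\ref{Unique stationary state} --- and it is not clear how a bound on $\qtr_n$ of non-repeating words would "cap the escaping mass" of $\nu_I$-atoms here; the paper needs no such estimate, and no control of the Woronowicz $Q$-weights either, precisely because it passes to the scalar harmonic function $\varphi$. Second, the harmonicity equation~\eqref{eq_harmonic} for $h$ does not couple $h$ and $\bar h$; the coupling enters only through the odd case of Lemma~\ref{lem_transform_stabilizer}, and is handled simply by taking $\max(M,\bar M)$ of the two suprema rather than by any delicate infinite recursion.
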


\begin{proof}
  From the stationarity property $(\qtr_1\otimes\nu)\Delta = \nu$ one easily
  obtains harmonicity of the elements $h$, $\bar h$ considered in
  Lemma~\ref{lem_transform_stabilizer}: we have
  $(\id\otimes\qtr_1)\Delta(h) = h$ and
  $(\id\otimes\qtr_1)\Delta(\bar h) = \bar h$. We have
  $(p_x\otimes\qtr_1)\Delta(p_z)\neq 0$ iff $z\subset x\otimes(u\oplus\bar u)$,
  hence the harmonicity equation for $h$ can be written
  \begin{equation}\label{eq_harmonic}
    h_x = {\ts\frac 12}(p_x\otimes \qtr_\alpha)\Delta(h_{x\alpha})
    + {\ts\frac 12} (p_x\otimes \qtr_{\bar\alpha})\Delta(h_{x\bar\alpha})
    + {\ts\frac 12} (p_x\otimes \qtr_{\bar\alpha})\Delta(h_{x'}).
  \end{equation}
  if $x = x'\alpha$ with $\alpha\in\{u,\bar u\}$ (we put $h_{x'} = 0$ if $x = 1$).
  
  Consider the function $\varphi : I \to \R_+$, $x \mapsto \qtr_x(h_x)$. Since
  quantum traces are compatible with subobjects we have, for
  $z\subset x\otimes y$ and $a_z\in B(H_z)$:
  $(\qTr_x\otimes \qTr_y)\Delta(a_z) = \qTr_z(a_z)$. Thus, applying $\qtr_x$
  to~\eqref{eq_harmonic} we obtain
  \begin{displaymath}
    \varphi(x) = {\ts\frac 12} p(x,x\alpha)\varphi(x\alpha)
    + {\ts\frac 12} p(x,x\bar\alpha) \varphi(x\bar\alpha)
    + {\ts\frac 12} p(x,x') \varphi(x'),
  \end{displaymath}
  where $p(x,y) = \qdim(y)/\qdim(x)\qdim(u)$ if $x$, $y$ are neighbours in the
  classical Cayley graph of $\F U_F$, i.e.\ $y\subset x\otimes (u\oplus\bar
  u)$. Note that with our notation $x\alpha = x\otimes\alpha$ so that
  $p(x,x\alpha) = 1$, and on the other hand
  $x\bar\alpha\oplus x' \simeq x\otimes\bar\alpha$ so that
  $p(x,x\bar\alpha) + p(x,x') = 1$. In short, $\varphi$ is harmonic with respect
  to the classical random walk on $I$ with transition probabilities $p(x,y)$.

  Besides, we claim that $\sum_{x\in I^+(\bar u)}\varphi(x)\leq 1$, where
  $I^+(\bar u) = I(\bar u)\cup\{1\}$. To see this, take a finite subset $F\subset I^+(\bar u)$ and put $k = 1+\max_{x\in F}|x|$, so that the subsets $I(\bar xu^{(k)})\subset \partial I$ are pairwise disjoint when $x\in F$.
  Consider the positive elements
  \begin{displaymath}
    \eta_x = (\qtr_x\otimes\id)\Delta(\pi_{u^{(k)}}) \in C(\beta_G\F U_F).
  \end{displaymath}
  We have $p_y\eta_x \neq 0$ iff there exists $z\in I$ such that
  $u^{(k)}z \subset x\otimes y$, equivalently,
  $y\subset \bar x \otimes u^{(k)}z$. If $x\in I^+(\bar u)$ we have
  $\bar x \otimes u^{(k)}z = \bar x u^{(k)}z$, hence we get
  $\eta_x = \pi_{\bar xu^{(k)}}\eta_x$. The projections $\pi_{\bar xu^{(k)}}$ are pairwise orthogonal when $x\in F$,
  hence we have
  $\| \ts \sum_{x\in F} \eta_x \| = \max_{x\in F} \|\eta_x\| \leq 1$. Since
  $\nu$ is a state we can then write
  \begin{displaymath}
      1\geq \nu\left({\ts\sum_{x\in F}\eta_x}\right) = \sum_{x\in F}\qtr_x(h_x^{(k)}) \geq \sum_{x\in F}\qtr_x(h_x) = \sum_{x\in F}\varphi(x).
  \end{displaymath}
  This holds for any finite $F\subset I^+(\bar u)$, hence he claim follows.
 Working with the elements
  $\bar\eta_x = (\qtr_x\otimes\id)$ $\Delta(\pi_{\bar u^{(k)}})\in C(\beta_G\F
  U_F)$ we see similarly that $\sum_{x\in I^+(u)} \bar\varphi(x)\leq 1$, where
  $\bar\varphi(x) = \qtr_x(\bar h_x)$.

  In particular $\varphi$ attains a maximum $M$ on $I^+(\bar u)$ and
  $\bar\varphi$ attains a maximum $\bar M$ on $I^+(u)$. For
  $y \in I\setminus I^+(\bar u)$ we can write $y = u^{(2l)}x$ with
  $x\in I(\bar u)$ or $y = u^{(2l+1)}x$ with $x\in I(u)$. In the first case we
  apply $\qtr_{u^{(2l)}x} = \qtr_{u^{(2l)}}\otimes\qtr_x$ to the identity of
  Lemma~\ref{lem_transform_stabilizer}, which yields
  $\varphi(u^{(2l)}x) = \varphi(x) \leq M$. In the second case we have similarly
  $\varphi(u^{(2l+1)}x) = \bar\varphi(x)\leq \bar M$. Altogether we see that
  $\varphi$ attains the maximum $\max(M,\bar M)$ on $I$. Since it is harmonic,
  it has to be constant. Since it is summable on $I^+(\bar u)$, which is
  infinite, we have in fact $\varphi = 0$. Hence $h = 0$. Similarly $\bar\varphi = 0$ and $\bar h = 0$.

  In particular we already have $\epsilon(h) = \lim_{k\infty}\nu(\pi_{u^{(k)}}) = 0$. Consider more generally a point $x\alpha^{(\infty)} \in \partial
  I$. Without loss of generality, one can assume that the last letter of $x$ is $\alpha$, i.e.\ $\bar x\in I(\bar\alpha)$, so that
  $x\alpha^{(k)}z = x\otimes\alpha^{(k)}z$ for all $k\in\N^*$, $z\in I$. We have then $(p_{\bar x}\otimes p_y)\Delta(p_{\alpha^{(k)}z}) \neq 0$ iff
  $y = x\alpha^{(k)}z$ and in that case one can take $V(\alpha^{(k)}z,\bar x\otimes y) = (\qdim \bar x)^{-1/2} (t_{\bar x}\otimes\id_{\alpha^{(k)}z})$ so that
  \begin{displaymath}
    (p_{\bar x}\otimes 1)\Delta(p_{\alpha^{(k)}z}) = (\qdim\bar x)^{-1} (t_{\bar x}t_{\bar x}^* \otimes p_{\alpha^{(k)}z}),
  \end{displaymath}
  where we identify $H_{x\alpha^{(k)}z} \simeq H_x\otimes H_{\alpha^{(k)}z}$. We apply $\qtr'_{\bar x}\otimes\id_x\otimes\id_{\alpha^{(k)}z}$ to this identity,
  where $\qtr'_{\bar x} = t_x^*(\id\otimes\,\cdot\,)t_x$ is the {\em right} quantum trace. According to the conjugate equations, this yields
  $(\qtr'_{\bar x}\otimes\id)\Delta$ $(p_{\alpha^{(k)}z}) = (\qdim\bar x)^{-1}(p_x\otimes p_{\alpha^{(k)}z}) = (\qdim\bar x)^{-1}p_{x\alpha^{(k)}z}$.  Summing over
  $z$ we obtain $(\qtr'_{\bar x}\otimes 1)\Delta(\pi_{\alpha^{(k)}}) = (\qdim\bar x)^{-1} \pi_{x\alpha^{(k)}}$. Finally we can apply $\nu$ and take the limit
  $k\to\infty$ to get
  \begin{displaymath}
    \lim_{k\infty}\nu(\pi_{x\alpha^{(k)}}) = (\qdim\bar x)\lim_{k\infty}\qtr'_{\bar x}(h_{\bar x}^{(k)}) = (\qdim\bar x)\qtr'_{\bar x}(h_{\bar x}) = 0,
  \end{displaymath}
  if $\alpha = u$. If $\alpha = \bar u$ the same argument holds with $h$ replaced by $\bar h$.
\end{proof}

\bigskip

We can now prove our main unique stationarity result. The global strategy is the
same as in the orthogonal case, see \cite{KKSV22}, however the techniques are
very different. The analysis of $\partial \F O_F$ ultimately relies on the
geometry of the fusion rules in the Temperley-Lieb category. This geometry also
appears in the case of $\F U_F$, but only over the points of the form
$x\alpha^{(\infty)} \in\partial I$. Thanks to the non-atomicity result above,
we will be able to ignore these points, and thus our analysis will rely only on
the combinatorics of the free monoid over $u$, $\bar u$, which is more
elementary than the geometry of the Temperley-Lieb category.

\begin{thm} \label{Unique stationary state}
  If $N\geq 3$, the harmonic state $\omega$ is the only $\qtr_1$-stationary state on $C(\partial \F U_F)$.
\end{thm}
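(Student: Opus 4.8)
The plan is to show that every $\qtr_1$-stationary state $\nu$ on $C(\partial_G\F U_F)$ coincides with the harmonic state $\omega$, and the first step is a purely algebraic reduction. From the fusion rules one has $\qtr_1 * \qtr_n = \lambda_n\qtr_{n+1} + (1-\lambda_n)\qtr_{n-1}$ with $\lambda_n = \qdim(n+1)/(\qdim(1)\qdim(n))$; the crucial point is that, although the fusion ring of $\F U_F$ is noncommutative, both $\qtr_1 * \qtr_n$ and $\qtr_n * \qtr_1$ are \emph{symmetric} states supported on the lengths $n\pm 1$ with the same masses on each, so $\qtr_1 * \qtr_n = \qtr_n * \qtr_1$. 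Setting $\nu_n = \qtr_n * \nu$, stationarity then gives $\qtr_1 * \nu_n = \qtr_n * \qtr_1 * \nu = \nu_n$, hence $\lambda_n\nu_{n+1} + (1-\lambda_n)\nu_{n-1} = \nu_n$; since $\nu_0 = \epsilon * \nu = \nu$ and $\nu_1 = \qtr_1 * \nu = \nu$, an immediate induction yields $\qtr_n * \nu = \nu$ for all $n$. Thus it suffices to compute $\lim_n(\qtr_n * \nu)(b)$ on a generating set and show the limit equals $\omega(b)$ \emph{independently} of $\nu$; combined with the constancy $\qtr_n * \nu = \nu$ this forces $\nu = \omega$.

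For the limit computation I would take $b = \psi_{x,\infty}(a)$ with $a \in p_x\ell^\infty(\F U_F)$ and write $(\qtr_n * \nu)(b) = \qtr_n(\fP_\nu(b))$, where $\fP_\nu(b) = (\id\otimes\nu)\beta(b) \in \ell^\infty(\F U_F)$. Since $\nu$ factors through $B_\infty = B/c_0(\F U_F)$ and $\qtr_n$ lives on length $n$, only the far-out part of $\beta(b)$ contributes, and Lemma~\ref{lem_boundary_action} rewrites this part, up to arbitrarily small error, in the boundary form $(p_n\otimes\psi_{r,\infty})(\cdot)$. Applying $\qtr_n$ on the first leg and using the conjugate-equation identity $(\id\otimes\qTr_y)(P(z,x\otimes y)) = (\qdim(z)/\qdim(x))\id_x$ (as in Lemma~\ref{lem_harmonic_state}) extracts the factor $\qtr_x(a)$ from the diagonal terms, while the remaining off-diagonal contributions are governed by the mismatch between the left and right quantum traces, i.e.\ by the matrices $Q_x$. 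These errors are precisely what Lemma~\ref{lem_woro_matrices} controls: since $N\geq 3$ we have $q\rho < 1$, so $\|Q_x\|/\qdim(x), \|Q_x^{-1}\|/\qdim(x) \leq (q\rho)^{|x|} \to 0$ and the off-diagonal part vanishes in the limit. This reduces $\lim_n(\qtr_n * \nu)(b)$ to $\qtr_x(a)$ times a limiting ``mass'' built from the numbers $\nu(\pi_z)$, i.e.\ from the classical measure $\nu_I$ on $\partial I$.

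It then remains to identify this mass with $\omega_I(\partial I(x))$, i.e.\ to show $\nu_I = \omega_I$. Here I would argue as in the proof of Proposition~\ref{prop_no_atoms}: the relevant quantities assemble into a bounded function on the tree $I$ that is harmonic for the transition probabilities $p(x,y) = \qdim(y)/\qdim(x)\qdim(u)$, and a maximum-principle argument on the free monoid over $\{u,\bar u\}$ pins it down. The non-atomicity of $\nu_I$ at the alternating ends $x\alpha^{(\infty)}$, established in Proposition~\ref{prop_no_atoms}, is exactly what allows one to discard these geometrically singular points and reduce the analysis to the elementary combinatorics of repetitions in reduced words, where Lemma~\ref{lem_trace_mod} supplies the needed uniform decay. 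Applying the whole computation to $\nu = \omega$ shows the common limit is $\omega(b)$, and since the elements $b = \psi_{x,\infty}(a)$ generate $C(\partial_G\F U_F)$ we conclude $\nu = \omega$.

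I expect the main obstacle to be the limit computation of the second paragraph in the non-unimodular case: keeping track of the off-diagonal $Q_x$-corrections and proving that they, together with the $\nu$-dependence of the measure part, wash out in the limit. This is where the hypothesis $N\geq 3$ (through $q\rho<1$ in Lemma~\ref{lem_woro_matrices}) and the non-atomicity result (Proposition~\ref{prop_no_atoms}) are indispensable; in the unimodular case both difficulties disappear, since then $\qtr_1$ preserves the center and uniqueness of $\nu_I$ follows directly from the classical random walk on $I$.
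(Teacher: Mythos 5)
Your first two paragraphs track the paper's proof quite closely. The commutation $\qtr_1*\qtr_n=\qtr_n*\qtr_1$ and the resulting three-term recursion for $\nu_n=\qtr_n*\nu$ do give $\qtr_n*\nu=\nu$ for all $n$ (the paper states this step tersely, but your justification is the right one), and the use of Lemma~\ref{lem_boundary_action} to rewrite $(\qtr_n\otimes\nu)\Delta(\psi_{x,\infty}(a))$, up to small error, as $\sum_l(\qtr_n\otimes\nu\psi_{r,\infty})(b_{n+r-2l})$, with the deep-cancellation terms $l>n-|x|$ killed by Lemma~\ref{lem_trace_mod} together with $q\rho<1$ from Lemma~\ref{lem_woro_matrices}, is exactly the paper's mechanism. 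Proposition~\ref{prop_no_atoms} is likewise used in the same place, to discard the alternating words $x_2=\alpha^{(k)}$ before the boundary-action lemma is applied.

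The gap is in your endgame. You propose to compute $\lim_n(\qtr_n*\nu)(b)$ \emph{exactly} as $\qtr_x(a)$ times a mass expressed through $\nu_I$, and then to prove $\nu_I=\omega_I$ by a harmonicity/maximum-principle argument ``as in Proposition~\ref{prop_no_atoms}''. That argument does not deliver $\nu_I=\omega_I$: the maximum principle there applies to the specific summable functions $\varphi(x)=\qtr_x(h_x)$ attached to the alternating projections $\pi_{u^{(k)}}$, whose harmonicity and summability rest on the structural identity of Lemma~\ref{lem_transform_stabilizer}; it shows that such a function vanishes, i.e.\ non-atomicity at the ends $x\alpha^{(\infty)}$, which is far weaker than identifying the whole boundary measure. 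In the non-unimodular case $\qtr_1*(\cdot)$ does not preserve the center $\ell^\infty(I)$ (only the partial left trace over the \emph{second} leg of $P(z,x\otimes y)$ is a scalar; over the first leg one picks up $Q$-matrices), so $\nu_I$ is not stationary for any classical random walk on $I$ and the uniqueness statement you would need is not available --- indeed, proving $\nu_I=\omega_I$ directly is essentially as hard as the theorem. The paper circumvents this entirely: it proves only the one-sided estimate $\nu(a)\le\omega(a)$ for positive $a$, by bounding the main terms $l\le n-|x|$ above by $\qtr_x(a_x)\,\qdim(x,n)/\qdim(n)$ using nothing about $\nu$ beyond $\nu\psi_{r,\infty}(p_r)\le 1$ (the inclusion $\bigoplus_l x_2''y_l'z_l'\subset x_2''y_l'\bar v_l\otimes z$ deliberately loses mass, which is why one only gets an upper bound); this bound is $\nu$-independent and converges to $\omega(\psi_{x,\infty}(a_x))$, and $\nu\le\omega$ between two states forces $\nu=\omega$. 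To repair your argument, replace the exact limit computation and the unproved claim $\nu_I=\omega_I$ by this one-sided, $\nu$-independent majorization.
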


\begin{proof}
  Let $\nu$ be a $\qtr_1$-stationary state on $B_\infty$.  As in \cite{KKSV22} we
  shall prove that $\nu\leq\omega$, which implies $\nu = \omega$ because both
  maps are states. For this we fix $x_1\in I$, $a_{x_1}\in B(H_{x_1})_+$ with
  $\|a_{x_1}\|\leq 1$, and we put $a = \psi_{x_1,\infty}(a_{x_1})$. It suffices
  then to prove $\nu(a)\leq\omega(a)$. For any $z\in I$ we denote
  $a_z = p_z a = \psi_{x_1,z}(a_{x_1}) \in B(H_z)$ (which vanishes unless
  $z\geq x_1$).

  By Proposition~\ref{prop_no_atoms} the measure $\nu_I$ has no atoms at points of the form $x_1\alpha^{(\infty)}$ with $\alpha = u$ or $\bar u$, hence we have the
  decreasing limit $\nu(\pi_{x_1\alpha^{(k)}})) \to_k \nu_I(\{x_1\alpha^{(\infty)}\}) = 0$. Take $\epsilon > 0$ and choose $k$ such that
  $(|x_1|+k)2^{-k} \leq\epsilon$ and $\nu(\pi_{x_1\alpha^{(k)}}) \leq \epsilon$ for all $\alpha = u$, $\bar u$. The elements
    $a = \sum_{y\in I}\psi_{x_1,x_1 y}(a_{x_1})$ and
    $a' := \sum_{x_2\in I_k} \psi_{x_1x_2,\infty}(a_{x_1x_2}) = \sum_{x_2\in I_k, y\in I} \psi_{x_1,x_1x_2y}(a_{x_1})$ are equal from length $|x_1|+k$ on, meaning that $p_la = p_la'$ if $l\geq |x_1|+k$. Since $\nu$, seen as a state on $B$, vanishes on $c_0(\G)$, we obtain the decomposition $\nu(a) = \nu(a') = \sum_{x_2\in I_k}\nu(\psi_{x_1x_2,\infty}(a_{x_1x_2}))$. For $x_2 = \alpha^{(k)}$, $\alpha = u$, $\bar u$ and $x = x_1x_2$ we have
  $\nu(\psi_{x,\infty}(a_x)) \leq \epsilon$ since $\|\nu \psi_{x,\infty}\| = \nu(\pi_{x_1\alpha^{(k)}}) \leq\epsilon$. Hence
  $\nu(a) \leq \sum_{x_2\in I'_k}\nu(\psi_{x_1x_2,\infty}(a_{x_1x_2})) + 2\epsilon$, where $I'_k = I_k \setminus \{u^{(k)},\bar u^{(k)}\}$.

  Fix now $x = x_1x_2$ with $x_2\in I'_k$ and consider $\nu(\psi_{x,\infty}(a_x))$. By iterating the stationarity identity ${\qtr_1}*\nu = \nu$ we
  obtain ${\qtr_n}*\nu = \nu$ for any $n$. Applying Lemma~\ref{lem_boundary_action} with $k$ replaced by $|x_1|+k$ and $\epsilon$ replaced by $2^{-k}\epsilon$ we get $r$ such that
  \begin{equation}\label{eq unique stat proof}
    \nu(\psi_{x,\infty}(a_x)) = (\qtr_n\otimes\nu)\Delta(\psi_{x,\infty}(a)) \leq 2^{-k}\epsilon + {\ts\sum_{l=0}^n} (\qtr_n\otimes \nu\psi_{r,\infty})(b_{n+r-2l}),
  \end{equation}
  where $b_s = (p_n\otimes p_r)\Delta(a_s) = \sum (p_y\otimes p_z)\Delta(a_w)$, with the sum running over $y\in I_n$, $z\in I_r$ and $w\in I_s$, $w\geq x$ (so that $\psi_{|x|,s}(a_x) = \sum a_w$).

  \bigskip

  We start with the terms $l > n - |x|$ in~\eqref{eq unique stat proof}. For such an $l$ we shall show that the positive linear form
  $\varphi : a_x \mapsto (\qtr_n\otimes \nu\psi_{r,\infty})(b_{n+r-2l})$ has a small norm for large $n$. We first apply Lemma~\ref{lem_trace_mod} with $p = |x|$ and
  $k$ replaced by $2k$, getting $\|\varphi-\varphi'\|\leq 2^{-2k}$, where $\varphi'(a_x) = (\qtr_n^{(|x|,2k)}\otimes \nu\psi_{r,\infty})(b_{n+r-2l})$. We now prove a non-trivial estimate for $\|\varphi'\| = \varphi'(p_x) = (\qtr_n^{(|x|,2k)}\otimes \nu\psi_{r,\infty})(c_{n+r-2l})$, where $c_s = \sum_{x\leq w\in I_s}(p_n\otimes p_r)\Delta(p_w)$.
  
  Take $y\in I_n^{(|x|,2k)}$ and $z\in I_r$ such that
  $(p_y\otimes p_z)c_{n+r-2l} \neq 0$: then there exists a unique
  $w\in I_{n+r-2l}$ such that $w\geq x$ and $w\subset y\otimes z$, obtained by
  simplifying $l$ pairs of conjugate letters from the right of $y$ and the left
  of $z$. We have then
  $(p_y\otimes p_z)\Delta(p_w) = V(w,y\otimes z)V(w,y\otimes z)^*$. Since
  $n-l<|x|$ we can write $y \simeq y_1y_2\otimes y_3$ with $|y_1| = n-l$ and
  $|x|\leq |y_1y_2|\leq |x|+2k$. Then we must have
  $z \simeq \bar y_3\otimes \bar y_2z'$, because $y\otimes z$ contains $w$ of
  length $n+r-2l$, hence the last $l$ letters $y_2y_3$ of $y$ must simplify with
  the first $l$ letters of $z$. Thus we can take
  $V(w,y\otimes z) = (\id\otimes t_{y_3}\otimes \id)V(w,y_1y_2\otimes \bar
  y_2z')/\|t_{y_3}\|$. We have as well $\qTr_y = \qTr_{y_1y_2}\otimes\qTr_{y_3}$
  and we can perform the following computation:
  \begin{align*}
    &\|t_{y_3}\|^2\times (\qTr_y\otimes p_z)[V(w,y\otimes z)V(w,y\otimes z)^*] = \\
    & \hspace{3cm} = (\qTr_{y_1y_2}\otimes\qTr_{y_3}\otimes p_z)[(\id_{y_1y_2}\otimes t_{y_3}\otimes\id_{\bar y_2z'})
      P(w,y_1y_2\otimes\bar y_2z')(\id_{y_1y_2}\otimes t_{y_3}^*\otimes\id_{\bar y_2z'})] \\
    & \hspace{3cm} = (\qTr_{y_3}\otimes p_z)[(t_{y_3}\otimes\id_{\bar y_2z'})
      (\qTr_{y_1y_2}\otimes\id)[P(w,y_1y_2\otimes\bar y_2z')](t_{y_3}^*\otimes\id_{\bar y_2z'})] \\
    & \hspace{3cm} = Q_{\bar y_3}^{-2}\otimes (\qTr_{y_1y_2}\otimes\id)[P(w,y_1y_2\otimes\bar y_2z')]
      \leq \|Q_{\bar y_3}^{-2}\|\qdim(y_1y_2) p_z,
  \end{align*}
  since $\|\qTr_{y_1y_2}\| = \qdim(y_1y_2)$ and
  $(\qTr_{y_3}\otimes\id_{\bar y_3})(t_{y_3}t_{y_3}^*) = Q_{\bar y_3}^{-2}$.

  Observe moreover that $\|t_{y_3}\|^2 = \qdim(y_3) = \qdim(y)/\qdim(y_1y_2)$. Summing the previous estimate over $y\in I_n^{(|x|,2k)}$ and $z\in I_r$ we
  obtain, since $\nu\psi_{r,\infty}$ is a state:
  \begin{align*}
    \|\varphi'\|
    &= \qdim(n)^{-1}{\ts\sum_{y\in I_n^{(|x|,2k)}}}  (\qTr_y\otimes\nu\psi_{r,\infty})(c_{n+r-2l}) \\
    & \leq \qdim(n)^{-1} {\ts\sum_{y\in I_n^{(|x|,2k)}}} \frac{\|Q_{\bar y_3}^{-1}\|^2}{\qdim(y_3)^2}\qdim(y).
  \end{align*}
  According to Lemma~\ref{lem_woro_matrices} we have $\|Q_{\bar y_3}^{-1}\|/\qdim(y_3) \leq (q\rho)^{|y_3|} \leq (q\rho)^{n-|x|-2k}$. For the remaining sum
  we have by definition $\qdim(n)^{-1}\sum_y\qdim(y)\leq 1$, hence altogether $\|\varphi'\| \leq (q\rho)^{2n-2|x|-4k}$ and
  $\|\varphi\| \leq 2^{-2k} + (q\rho)^{2n-2|x|-4k}$. Recall now that $q\rho<1$ when $N\geq 3$. Letting $n\to \infty$ we thus obtain
  \begin{displaymath}
      \limsup_{n\infty} {\ts\sum_{l=n-|x|+1}^n} (\qtr_n\otimes \nu\psi_{r,\infty})(b_{n+r-2l}) \leq |x| 2^{-2k} \|a_x\|\leq |x| 2^{-2k},
  \end{displaymath}
  since there are $|x|$ terms in the sum.

  \bigskip
  
  Then we consider the terms $l\leq n-|x|$ in~\eqref{eq unique stat proof}. Take $y$, $z\in I$ with $|y| = n$, $|z| = r$. For each $s = n+r-2l$ there is at most one $w_l = xy_l'z_l' \geq x$
  of length $s$ such that $w_l\subset y\otimes z$, corresponding to a unique $v_l$ of length $l$ such that one can write $y = xy_l'\bar v_l$,
  $z=v_lz_l'$. Denote $p$ the maximal length of such a word $v_l$, with $p\leq n-|x|$. We have then $\bigoplus_{l=0}^p w_l \subset y\otimes z$ and
  \begin{displaymath}
    \sum_{l=0}^{n-|x|}(\qTr_y\otimes p_z)(b_s) = \sum_{l=0}^p(\qTr_y\otimes p_z)[V(w_l,y\otimes z)(a_x\otimes\id_{y'_lz'_l})V(w_l,y\otimes z)^*].
  \end{displaymath}

  Now recall that we have $x = x_1x_2$, with $x_2\in I'_k$ which can thus be written $x_2 = x_2'\otimes x_2''$. We have then
  $y = x_1x_2'\otimes x_2''y_l'\bar v_l$, $w_l = x_1 x_2'\otimes x_2'' y'_lz'_l$, so that $\qTr_y = \qTr_{x_1x_2'}\otimes \qTr_{x_2''y_l'}$ and we can take
  $V(w_l,y\otimes z) = \id_{x_1 x_2'}\otimes V(x_2'' y'_lz'_l, x_2''y_l'\bar v_l\otimes z)$. On the other hand we have by definition $a_x = \psi_{x_1,x}(a_{x_1}) = \psi_{x_1x'_2,x}(a_{x_1x'_2})$, and since $x=x_1x'_2\otimes x''_2$, we can take $V(x,x_1x'_2\otimes x''_2) = \id$ to compute $\psi_{x_1x'_2,x}$, so that finally
  $a_x = a_{x_1x'_2}\otimes \id_{x''_2}$. We have then
  \begin{align*}
    & (\qTr_y\otimes p_z)[V(w_l,y\otimes z)(a_x\otimes\id)V(w_l,y\otimes z)^*] = \\
    & \hspace{3cm} = \qTr_{x_1x_2'}(a_{x_1x'_2}) \times (\qTr_{x_2''y_l'}\otimes p_z)(P(x_2'' y'_lz'_l, x_2''y_l'\bar v_l\otimes z)) \\
    & \hspace{3cm} = \qTr_{x_1x_2'}(a_{x_1x'_2}) \frac{\qdim(x_2'' y'_lz'_l)}{\qdim(z)} \times p_z.
  \end{align*}
  Since $\bigoplus_{l=0}^p x_2'' y'_lz'_l \subset x_2''y_0'\otimes z$ we get the following upper bound for the sum over $l$:
  \begin{align*}
    \sum_{l=0}^{n-|x|}(\qTr_y\otimes p_z)(b_s) &\leq \qTr_{x_1x_2'}(a_{x_1x'_2}) \frac{\qdim(x_2'' y'_0\otimes z)}{\qdim(z)} \times p_z \\
    &= \qtr_{x_1x'_2}(a_{x_1x'_2}) \qdim(y) p_z = \qtr_x(a_x) \qdim(y) p_z .
  \end{align*}

  Now we make the sum over $y$ and $z$. Since we are in the case where less than $n-|x|$ letters are simplified in the tensor product $y\otimes z$, the terms where $y$ does not start with $x$ vanish and we get
  \begin{displaymath}
    \sum_{l=0}^{n-|x|}(\qTr_n\otimes p_r)(b_s) \leq \qtr_x(a_x) p_r \times {\ts\sum} \{\qdim(y) ; |y| = n, y\geq x\}.
  \end{displaymath}
  Finally since $\nu$ is a state we have $\nu\psi_{r,\infty}(p_r)\leq 1$ and we obtain, using the notation $\qdim(x,n)$ from the previous lemma:
  \begin{displaymath}
    \sum_{l=0}^{n-|x|}(\qtr_n\otimes \nu\psi_{r,\infty})(b_s) \leq \qtr_x(a_x) \frac{\qdim(x,n)}{\qdim(n)}.
  \end{displaymath}
  Note that as $n\to\infty$ the right-hand side tends to $\qtr_x(a_x)\omega_I(\partial I(x)) = \omega(\psi_{x,\infty}(a_x))$.

  \bigskip

  Altogether~\eqref{eq unique stat proof} yields
  $\nu(\psi_{x,\infty}(a_x)) \leq 2^{-k}\epsilon + |x| 2^{-2k} +
  \omega(\psi_{x,\infty}(a_x))$ for any $x = x_1x_2$, $x_2 \in I'_k$. Summing
  over $x'_2$, which takes less than $2^k$ values, this yields
  $\nu(a)\leq \epsilon + (|x_1|+k)2^{-k} + \omega(a) + 2\epsilon \leq
  \omega(a)+4\epsilon$. This is true for any $\epsilon>0$, hence
  $\nu(a)\leq\omega(a)$. This is true for every
  $a\in \psi_{x_1,\infty}(B(H_{x_1})_+)$, hence $\nu=\omega$ on
  $\psi_{x_1,\infty}(B(H_{x_1}))$ for every $x_1$, and finally $\nu = \omega$.
\end{proof}

\begin{cor} \label{Cor FUF boundary}
  If $N\geq 3$, the Gromov boundary of $\F U_F$ is an $\F U_F$-boundary.  
\end{cor}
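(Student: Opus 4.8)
The plan is to obtain the corollary as a direct consequence of Theorem~\ref{Unique stationary state} via the unique stationarity method of \cite{KKSV22}, exactly as announced in the paragraph preceding that theorem. First I would collect the three ingredients that are now in hand. By \cite{VVV10} (recalled above), the harmonic state $\omega$ on $C(\partial_G\F U_F)$ is $\qtr_1$-stationary, i.e.\ $\qtr_1*\omega=\omega$, and the associated Poisson map $P_1=(\id\otimes\omega)\beta:C(\partial_G\F U_F)\to H^\infty(\F U_F,\qtr_1)$ is completely isometric. By Theorem~\ref{Unique stationary state}, for $N\geq 3$ this $\omega$ is moreover the \emph{unique} $\qtr_1$-stationary state on $C(\partial_G\F U_F)$. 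Recall also that $C(\partial_G\F U_F)=B_\infty$ is a unital $\F U_F$-$C^*$-algebra for the action $\beta$ given by the restriction of the comultiplication.

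Next I would verify that the measure $\mu_1=\frac12(\delta_u+\delta_{\bar u})$, equivalently the state $\psi_{\mu_1}=\qtr_1$ on $\ell^\infty(\F U_F)$, is generating. This holds because the fundamental corepresentation $u$ and its conjugate $\bar u$ generate the whole fusion category $\Corep(\F U_F)$: every $w\in I$ is a word in $u$, $\bar u$, so the support $\{u,\bar u\}$ of $\mu_1$ generates $\Irr(\F U_F)$. This is precisely the genericity hypothesis needed for the criterion.

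Finally I would invoke the unique stationarity criterion from \cite{KKSV22}: for a generating measure $\mu$, a unital $\G$-$C^*$-algebra carrying a unique $\mu$-stationary state whose Poisson map is completely isometric is a $\G$-boundary. Applying this with $\G=\F U_F$, $A=C(\partial_G\F U_F)$, $\mu=\mu_1$ and the state $\omega$, and using the ingredients assembled above, yields that $C(\partial_G\F U_F)$ is an $\F U_F$-boundary whenever $N\geq 3$.

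The genuine content lies entirely in Theorem~\ref{Unique stationary state}, so I do not expect a real obstacle at this stage; the corollary is essentially a formal bookkeeping step. The only point that merits a moment's care is matching the hypotheses of the criterion of \cite{KKSV22} verbatim to the present reduced setting and to the specific walk $\mu_1$---in particular making sure that generatingness of $\mu_1$ is indeed what the cited result requires, which it is.
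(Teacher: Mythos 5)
Your proposal is correct and follows exactly the paper's argument: complete isometry of the Poisson map $P_1$ from \cite{VVV10}, uniqueness of the $\qtr_1$-stationary state from Theorem~\ref{Unique stationary state}, and the unique stationarity criterion of \cite{KKSV22} (Theorem~4.19 there). The extra remark that $\mu_1$ is generating is a harmless bit of due diligence that the paper leaves implicit.
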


\begin{proof}
  It is already known that the Poisson map $P_1 = (\id\otimes\omega)\beta : C(\partial_G\F U_F) \to H^\infty(\F U_F,\qtr_1) \subset \ell^\infty(\F U_F)$ is
  completely isometric \cite{VVV10}. Taking into account the unique stationarity result of Theorem~\ref{Unique stationary state}, we can thus apply
  \cite[Theorem~4.19]{KKSV22} which shows that that $\partial_G\F U_F$ is an $\F U_F$-boundary.
\end{proof}

\section*{Acknowledgments}

The authors would like to thank Fatemeh Khosravi for some interesting discussions and, as well, for pointing out the references \cite{PZ15, Z19}. R.V. and B.A.-S. were partially supported by the ANR grant ANR-19-CE40-0002. B.A.-S. was partially supported by PIMS and the Simons Foundation grant PPTW GR023618.

{\small\bibliography{article}}
\bibliographystyle{alpha}
\renewcommand*{\bibname}{References}

\end{document}